\newtheoremstyle{lessspaceplain}
{.3\baselineskip\@plus.2\baselineskip\@minus.2\baselineskip}
{.2\baselineskip\@plus.2\baselineskip\@minus.2\baselineskip}
{\itshape}
{}
{\bfseries}
{.}
{ }
{}
\theoremstyle{lessspaceplain}
\newtheorem{theorem}{Theorem}[section]
\newtheorem{theoremDT}[theorem]{Theorem/Definition}
\newtheorem{lemma}[theorem]{Lemma}
\newtheorem{cor}[theorem]{Corollary}
\newtheorem{question}[theorem]{Question}
\newtheorem{prop}[theorem]{Proposition}
\theoremstyle{definition}
\newtheorem{defn}[theorem]{Definition}
\newtheorem{definition}[theorem]{Definition}
\newtheorem{example}[theorem]{Example}
\newtheorem{remark}[theorem]{Remark}
\newtheorem{notation}[theorem]{Notation}
\newtheorem{obs}[theorem]{Observation}
\newtheorem{introthm}{Theorem}
\newtheorem{introcor}[introthm]{Corollary}
\def\NN {{\mathbb N}}
\def\N {{\mathbb N}}
\def\ZZ {{\mathbb Z}}
\def\BB {{\mathcal B}}
\def\RR {{\mathbb R}}
\def\GG{{\mathcal G}}
\newcommand{\calS}{\mathcal{S}}
\newcommand{\lam}{\lambda}
\newcommand{\into}{\hookrightarrow}
\newcommand{\al}{\alpha}
\newcommand{\eps}{\varepsilon}
\newcommand{\bxi}{\boldsymbol{\xi}}
\newcommand{\out}{\textup{Out}(F_n)}
\newcommand{\ol}{\overline}
\newcommand{\wt}{\widetilde}
\newcommand{\isom}{\text{Isom}}
\newcommand{\B}{\mathcal{B}}
\newcommand{\vol}[1]{\text{vol}(#1)}
\newcommand{\Teich}{Teichm\"{u}ller }
\newcommand{\Lip}{\text{Lip}}
\newcommand{\sig}{\sigma}
\newcommand{\os}{\mathcal{X}_n}
\renewcommand{\ol}{\overline}
\newcommand{\Cos}{\hat{\os}}
\newcommand{\stab}[1]{\text{Stab}(#1)}
\newcommand{\fmk}{f_{m,k}}
\newcommand{\f}[1]{f_{#1}}
\newcommand{\co}{\colon}
\newcommand{\lip}[1]{\text{Lip}(#1)}
\newcommand{\plf}{\mathcal{PLF}}
\newcommand{\from}{\colon}
\newcommand{\clBin}{\overline{B}_{\text{in}}}
\newcommand{\clBout}{\overline{B}_{\text{out}}}
\newcommand{\Bin}{B_\text{in}}
\newcommand{\scos}{\mathcal{\hat{X}}_n^S}
\newcommand{\Xm}{X_\infty(m)}
\newcommand{\fsc}{\text{FS}_n}
\newcommand{\inj}{\text{Inj}}
\newcommand{\fl}[1]{\displaystyle \overrightarrow{\lim_{#1 \to \infty}}}
\renewenvironment{proof}[1][\proofname]{\par
\vspace{-\topsep}
\pushQED{\qed}%
\normalfont
\topsep0pt \partopsep0pt 
\trivlist
\item[\hskip\labelsep
\itshape
#1\@addpunct{.}]\ignorespaces
}{%
\popQED\endtrivlist\@endpefalse
\addvspace{6pt plus 6pt} 
}
\titlespacing\subsection{0pt}{12pt plus 4pt minus 2pt}{-4pt plus 2pt minus 2pt}
\titlespacing\section{0pt}{12pt plus 4pt minus 2pt}{-2pt plus 2pt minus 2pt}
\setlist[enumerate]{noitemsep, topsep=-5pt}
\setlist[itemize]{noitemsep, topsep=-7pt}
\begin{document}
\title{The Metric Completion of
Outer Space} 

\author{Yael Algom-Kfir%
\thanks{Electronic address: \texttt{yalgom@univ.haifa.ac.il}}}
\affil{Mathematics Department, University of Haifa\\ Mount Carmel, Haifa, 31905, Israel}
\maketitle
\begin{abstract}We develop the theory of a metric completion of an asymmetric metric space. 
We characterize the points on the boundary of Outer Space that are in the metric completion of Outer Space with the Lipschitz metric. We prove that the simplicial completion, the subset of the completion consisting of simplicial tree actions, is homeomorphic to the free splitting complex. 
We use this to give a new proof of a theorem by Francaviglia and Martino that the isometry group of Outer Space is homeomorphic to $\out$ for $n \geq 3$ and equal to $\text{PSL}(2,\ZZ)$ for $n=2$.
\end{abstract}

Outer Space, defined by Culler and Vogtmann \cite{CV}, has become in the past decades an important tool for 
studying the group of outer automorphisms of the free group $\out$. 
It is defined as the space of minimal, free and simplicial metric trees with an 
isometric action of $F_n$ (see section \ref{secOuterSpace}). 
Outer Space, denoted $\os$, admits a natural (non-symmetric) 
metric: the distance $d(X,Y)$ is the maximal amount of stretching 
any equivariant map from $X$ to $Y$ must apply to the edges of $X$. The group 
$\out$ acts on Outer Space by isometries. As mentioned, this metric is non-symmetric 
and in fact $\frac{d(X,Y)}{d(Y,X)}$ can be arbitrarily large (see \cite{AKB} for a 
general theorem about the asymmetry of Outer Space). Moreover it is not proper in 
the sense that outgoing balls 
\[\overline{B_{\text{out}}}(X,r) = \{Y \mid d(X,Y) \leq r \}\] 
are not compact (See Proposition \ref{compactBalls}). One way to fix this is to symmetrize the metric, i.e. define 
$d_s(X,Y) = d(X,Y) + d(Y,X)$. Closed balls in the metric $d_s$ are compact thus 
resolving both problems. However, in symmetrizing we lose much of the insight that 
Outer Space provides into the dynamics of the action of $\out$ on $F_n$. 
Moreover, Outer Space with the symmetric metric is 
not a geodesic space \cite{FM}. Therefore we prefer to keep the non-symmetric metric and determine the metric completion of 
$\os$ with the asymmetric metric. This raises the general question of how to 
complete an asymmetric metric. 
This turns out to be an intereseting problem in and of itself. The new terms in Theorem \ref{ThmCompl} are defined in Section 1,
\begin{introthm}\label{ThmCompl}
For any forwards continuous asymmetric space $(X,d)$ which satisfies property (\ref{star}), there is a unique forwards 
complete asymmetric space $(\hat X, \hat d)$ 
and an isometric embedding $\iota: X \to \hat X$ 
such that $\hat X$ is semi forwards continuous with respect to $\text{Im} (\iota)$ and so that $\iota(X)$ is forwards dense in $\hat X$. 
\end{introthm}
We also prove
\begin{introcor}\label{isomXintro}
For any forwards continuous asymmetric space $(X,d)$ which satisfies property (\ref{star}), 
each isometry $f \from X \to X$ induces a unique isometry $F \from \hat X \to \hat X$. 
\end{introcor}
After the completion of this article, it was brought to my attention that the completion of an asymetric metric space (also called a quasi-metric space) has been previously addressed, see for example \cite{BvBR}. However, their definition of a forward limit is slightly different (see \cite{BvBR}[page 8]) as well as their catagorical approach. Our approach is a little more hands-on providing some interesting examples as well to show the necessity of the conditions in our theorems. Both approaches give the same completion and we believe each has its merits.

We then address the issue of completing Outer Space. The boundary of Outer Space $\partial \os$ is the space of all homothety classes of \emph{very small} $F_n$ trees (see definition \ref{defnVerySmall}) that are not free and simplicial.
Our main result is: 
\begin{introthm}\label{myB}
Let $[T]$ be a homothety class in $\partial \os$, the point $[T]$ is contained in the completion of $\os$ if and only if for any (equivalently for some) $F_n$ tree $T$ in the class $[T]$, orbits in $T$ are not dense and arc stabilizers are trivial. 
\end{introthm}

We show that the Lipschitz distance can be extended to the completion 
(allowing the value $\infty$) and that isometries of $\os$ uniquely extend 
to the completion. We distinguish the set of simplicial trees in the metric 
completion of Outer Space and refer to it as the \emph{simplicial metric 
completion}. The simplicial metric completion is related to a well 
studied complex called the complex of free splittings. We show that the simplicial metric completion is invariant under any isometry of the completion of Outer Space (see Proposition \ref{isomExtend}). 

The complex of free splittings of $F_n$, denoted $FS_n$ (and 
introduced by Hatcher \cite{HatFS} as the sphere complex) is the 
complex of minimal, simplicial, actions of $F_n$ on simplicial trees 
with trivial edge stabilizers (see definition \ref{defnSplittingComplex}). 
The group $\out$ acts on $FS_n$ by simplicial automorphisms. This 
action is cocompact but has large stabilizers. 
Aramayona and Souto \cite{AS} proved that $\out$ 
is the full group of automorphisms of $FS_n$.
The free splitting complex is one of the analogues of 
the curve complex for mapping class groups. For example, 
Handel and Mosher \cite{HMhyp} showed that the free splitting complex 
with the Euclidean metric is Gromov hyperbolic (this proof was later 
simplified by \cite{HilionHorbezFS}). We prove that 

\begin{introthm}\label{myC}
The simplicial metric completion of Outer Space with the Lipschitz topology is homeomorphic to the free splitting complex with the Euclidean topology. 
\end{introthm}

We prove that an isometry of Outer Space extends to a simplicial automorphism of $\fsc$ (Proposition \ref{HomeoPresSimp}) and use Aramayona and Souto's theorem to prove the following theorem of Francaviglia and Martino.

\begin{introthm}\cite{FMIsom}\label{myD}
The group of isometries of Outer Space is equal to $\out$ for $n\geq 3$ and to $\text{PSL}(2,\ZZ)$ for $n=2$.
\end{introthm}

Francaviglia-Martino prove Theorem \ref{myD} for both the Lipschitz metric and for the symmetric metric. 
The techniques in this paper only apply to the asymmetric case. 
However, once the theory of the completion of Outer Space is established, the proof of Theorem \ref{myD} is natural and relatively light in computations.

As highlighted in \cite{FMIsom}, an application of Theorem \ref{myD} is that if $\Gamma$ is an irreducible lattice in a higher-rank connected semi-simple Lie group, then every action of $\Gamma$ on 
Outer Space has a 
global fixed point (this follows from a result of Bridson and Wade \cite{BW} 
that the image of $\Gamma$ in $\out$ of is always finite). In lay terms $\Gamma$ cannot act on $\os$ in an interesting way. 

We remark that the Thurston metric on \Teich space, which inspired for the 
Lipschitz metric on Outer Space, is in fact a proper metric i.e. closed balls are 
compact. Walsh \cite{WalshHoroBoundary} proved that the isometry group of \Teich space with the 
Thurston metric is the extended mapping class group. His technique was to embed 
\Teich space into the space of functions and consider its horofunction boundary. 
However, he uses the fact that the \Teich space with the Thurston metric is proper, which is false in the case of Outer Space. 

I originally wrote a version of this paper in 2012 as part of my postdoctoral work. 
The original paper was rejected by a journal after an unusually long refereeing process and as often happens, I put it away for a long time. 
I wish to thank Ilya Kapovich and Ursula Hamenst\"{a}dt for encouraging me to look at it again and to extend the results of the original work. I also thank Emily Stark for thoughly reading the first section and  Mladen Bestvina for inspiring the original work. Thanks to David Dumas for pointing me to papers on quasi-metric spaces.

\section{The completion of an asymmetric metric space}

This section develops the theory of the completion of an incomplete asymmetric metric space (see Definition \ref{asymMetricSp}). Working out the details often reminded me of W. Thurston's quote in his famous ``notes'' \cite{ThurstonNotes} ``Much of the material or technique is new, and more of it was new to me. As a consequence, I did not always know where I was going \dots The countryside is scenic, however, and it is fun to tramp around \dots''. 

The starting point was to imitate the completion of a metric space using Cauchy sequences. This strategy eventually works but many potholes must first be overcome. 
For example, not every convergent sequence is a Cauchy sequence (Example \ref{ConvNoCauchy}). In fact, the Cauchy condition is a little unnaturally strict in the asymmetric setting so we relax it to define admissible sequences (see Definition \ref{add_seq_defn}). We define a limiting distance between two admissible sequences (Lemma \ref{dist_limit}). In the symmetric setting the completion is the space of equivalence classes of sequences with limiting distances equal to 0. In the asymmetric setting an admissible sequence $\{x_k\}$ can converge to a point $x$ but one of the limiting distances between $\{x_k\}$ and the constant sequence $\{x\}$ is not 0. Therefore we define a weaker equivalence relation (see Definition \ref{defnEquiv}). We then define the completion $\hat X$ of $X$ as the set of equivalence classes of admissible sequences with an appropriate notion of $\hat d$ (see Definition \ref{defHat}). However, one must add extra conditions to guarantee that $(\hat X, \hat d)$ is an asymmetric metric space (Lemma \ref{triDHatLemma}), and that it is complete (Lemma \ref{bigLemma}). Finally, under certain conditions we show that for $(X,d)$ an asymmetric metric space $(\hat X, \hat d)$ is the unique complete asymmetric metric space that contains a dense copy of $X$ (Theorem \ref{ThmCompl}), and that isometries of $X$ uniquely extend to $\hat X$ (see Corollary \ref{isomX}). 
The following are not strictly needed for the rest of the paper but are included for clarity: Proposition \ref{hatXsatisfies11}, Remark \ref{remarkBadExample} and Lemma \ref{connection}.

\begin{definition}\label{asymMetricSp}
An asymmetric metric on a set $X$ is a function $d:X \times X \to \RR \cup \{\infty\}$ which satisfies the following properties:
\begin{enumerate}
\item $d(x,y) \geq 0$ 
\item If $d(x,y) = 0$ and $d(y,x) = 0$ then $x=y$.
\item For any $x,y,z \in X$, $d(x,z) \leq d(x,y) + d(y,z)$.
\end{enumerate}
\end{definition}

Since the metric is not symmetric, a sequence might have many (forward) 
limits.

\begin{defn}\label{dfCFL} The point $x \in X$ is a \emph{forward limit} of the sequence 
$\{x_n\}_{n=1}^\infty \subset X$ if 
\[\lim_{n \to \infty} d(x_n,x)=0,\] and it is the \emph{closest forward limit} (CFL) if additionally, for every $y$ in $X$ 
such that $\lim_{n \to \infty} d(x_n,y)=0$ we have $d(x,y)=0$. If the two conditions are satisfied then 
we write $ x = \fl{n} x_n$. 
\end{defn}
Note that if $\{x_n\}$ admits a CFL $x$ then it is unique since if $z$ 
was another such limit we would get $d(x,z) = 0 = d(z,x)$ hence $x=z$. 

A closest 
backward limit (CBL) is defined as in Definition \ref{dfCFL}, only switch the order of the parameters of $d$.

We would like to define the completion of an asymmetric metric 
space in a similar fashion to 
the completion of a metric space, as equivalence classes of (appropriately defined) 
Cauchy sequences. 
We introduce the notion of an admissible sequence which is better suited to the asymmetric setting. 

\begin{defn}\label{add_seq_defn}[Admissible sequences, Cauchy sequences]
A sequence $\xi = \{x_n\}$ is \emph{forwards Cauchy} if for all $\eps>0$ there is an $N(\eps) \in \NN$ such that 
\begin{equation}\label{eq21} d(x_i, x_j)< \eps \quad \text{for all} \quad j>i>N(\eps)\end{equation}
In words, we first choose the left index to be large enough and then the right index to be even larger. For backwards Cauchy we choose the right index first and then the left index. \\
A sequence $\{ x_n \} \subseteq \os$ is \emph{forwards admissible} if for all $\eps>0$ there is a natural number $N(\eps)$ such that for all $n >N(\eps)$ there is a natural number $K(n,\eps) > n$ such that for all $k>K(n,\eps)$, 
\begin{equation}\label{eq22}d(x_n, x_k)<\eps. \end{equation}
Refer to Figure \ref{AdmissibleFig} for an illustration of the indices. 
A backwards admissible sequence is defined similarly (choosing the right index first). \\
When we leave out the adjective forwards or backwards for an admissible sequence we shall always mean a forwards admissible sequence. 
\end{defn}

\begin{figure}[ht]
\begin{center}
\includegraphics{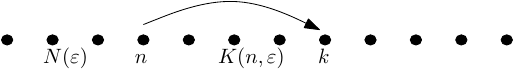}
\caption{\label{AdmissibleFig}A forwards admissible sequence, the indices are increasing to the right and 
the arrow shows the direction in which the distance is small.}
\end{center}
\end{figure}

Note that if $d$ is a symmetric metric then the Cauchy and admissible definitions are equivalent.

\begin{example}\label{NonSymEx}
Consider $X=[0,1]$ where $d(x,x') = x-x'$ 
if $x>x'$ and $d(x,x') =1$ if $x<x'$. The sequence $x_n = \left\{ \begin{array}{ll} \frac{1}{n} & n \text{ odd}\\[0.2 cm] \frac{1}{2n} & n \text{ even} \end{array}\right.$ is a forwards admissible sequence that is not forwards Cauchy. The point $x =0$ is its CFL. 
\end{example}

\begin{remark}\label{NKdefn}
We use the convention that $N(\eps), K(n,\eps)$ denote the smallest integers with the required property.
Clearly, the indexes $N$ and $K$ also depend on the sequence $\xi$ and when emphasizing this dependence will write $N(\xi, \eps)$ etc. 
\end{remark}

Unlike in a symmetric metric space, it is false that a convergent sequence is a Cauchy sequence. Even more disturbingly, a sequence admitting a CFL may not have an admissible subsequence! 

\begin{example}\label{ConvNoCauchy} 
Let $X$ be the space from Example \ref{NonSymEx}, we glue countably many different copies of $X$ along the point $0$:
\[ Y = X \times \N / (0,i) \sim (0,j) \text{ for } i,j \in \N. \]
and extend the metric by declaring $d((x,i), (x',j)) = 1$ if $x\neq 0$ and $x'\neq0$ and $i \neq j$. Consider the sequence $y_k = (\frac{1}{k}, k)$. Then $y_k$ has a CFL, it is the equivalence class of the point $(0,1)$. But for each $n \neq k$: $d(y_n, y_k) =1$ so it admits no admissible subsequence. 
\end{example}

However, we do have the following properties:

\begin{prop}\label{ssProp}
If $\{x_n\}$ is a forwards admissible sequence and $\{x_{n_j}\}$ is an infinite subsequence then
\begin{enumerate}
\item $\{ x_{n_j} \}$ is admissible. 
\item if $x$ is a forward limit of $\{x_n\}$ then it is a forward limit of $\{x_{n_j}\}$.
\item if $x$ is a forward limit of $\{x_{n_j}\}$ then it is a forward limit of $\{x_n\}$.
\item $x = \fl{n} x_n$ if and only if $x=\fl{j} x_{n_j}$. 
\end{enumerate}
\end{prop}
\begin{proof}
Items (1) and (2) follow from the definitions. To prove (3) suppose $\lim_{j \to \infty} d(x_{n_j}, x) = 0$. Then given $\eps>0$ let $N(\eps)$ be the constant from Definition \ref{add_seq_defn} let $n>N(\eps)$ and let $K(n, \eps)$ be the constant from the same definition then for $n_j>K(n,\eps)$ we have:
\[ d(x_n,x) \leq d(x_n, x_{n_j}) + d(x_{n_j}, x) \leq d(x_{n_j},x) + \eps \] which limits to 0. 
Item (4) follows from (2) and (3). 
\end{proof}

\begin{obs}\label{obs1} In keeping with the conventions of Remark \ref{NKdefn}, we observe that if $\xi$ is forwards admissible and $\eps' \leq \eps$ then
\begin{enumerate}
\item $N(\eps) \leq N(\eps')$. 
\item $K(n,\eps) \leq K(n,\eps')$
\item $n' \geq K(n,\eps)$ implies $K(n,\eps) \leq K(n',\eps)$
\end{enumerate}
\end{obs}

\begin{prop}\label{addConCauchy}
Every forwards admissible sequence $\xi = \{ x_n\}$ has a subsequence $\{x_{n_i}\}$ which is forwards Cauchy. Moreover we can choose this subsequence so that for all $i<j$ we have \[ d(x_{n_i},x_{n_j})< \frac{1}{2^i}\]
\end{prop}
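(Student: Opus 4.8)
The plan is to build the Cauchy subsequence inductively, using the admissibility hypothesis at a rapidly shrinking sequence of scales. Recall that admissibility says: for every $\eps>0$ there is $N(\eps)$ so that for each $n>N(\eps)$ there is $K(n,\eps)$ with $d(x_n,x_k)<\eps$ for all $k>K(n,\eps)$. The key point is that this controls $d(x_n,x_k)$ for a \emph{fixed} early index $n$ and all sufficiently late $k$; it does not directly control $d(x_k,x_m)$ for two late indices. So to get a genuinely Cauchy subsequence we must choose the indices of the subsequence to play the role of the ``early'' index $n$ at the appropriate scale, and then use the triangle inequality to pass to pairs of late indices.

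Concretely, set $\eps_j = \frac{1}{2^{j+1}}$ (or any summable choice). First I would pick $n_1 > N(\eps_1)$. Having chosen $n_1 < \dots < n_j$, I would choose $n_{j+1}$ large enough that simultaneously (i) $n_{j+1} > N(\eps_{j+1})$, (ii) $n_{j+1} > n_j$, and (iii) $n_{j+1} > K(n_i,\eps_i)$ for every $i \le j$ — this last condition is the crucial one and is possible since it is finitely many constraints. Then for any $i$, every later term $n_m$ with $m>i$ satisfies $n_m > K(n_i,\eps_i)$, hence $d(x_{n_i}, x_{n_m}) < \eps_i$. Now for $j < k < m$, the triangle inequality gives $d(x_{n_k},x_{n_m}) \le d(x_{n_k},x_{n_{k}}) $... more usefully, we go the other way: to bound $d(x_{n_k},x_{n_m})$ we can route through no earlier point, so instead I would record the sharper statement that $d(x_{n_i},x_{n_m}) < \eps_i$ for all $m > i$ and observe this already gives, for $j<k<m$, that $d(x_{n_k},x_{n_m}) < \eps_k \le \eps_j = \frac{1}{2^{j+1}} < \frac{1}{2^j}$, which is the ``moreover'' clause. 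For the plain Cauchy conclusion: given $\eps>0$ choose $j$ with $\eps_j < \eps$; then for all $m > k > j$ we have $d(x_{n_k},x_{n_m}) < \eps_k \le \eps_j < \eps$, so $\{x_{n_j}\}$ is forwards Cauchy.

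I expect the main (minor) obstacle to be bookkeeping the order of quantifiers correctly: one must be careful that admissibility is applied with the subsequence index serving as the fixed early index $n$, and that the monotonicity conventions for $N$ and $K$ recorded in the Remark are used so that the finitely many lower bounds imposed at stage $j+1$ can indeed be met by a single choice of $n_{j+1}$. Once the indices are chosen so that $n_{m} > K(n_i,\eps_i)$ whenever $m>i$, everything else is an immediate application of admissibility followed by the triangle inequality, and no asymmetry-specific subtlety arises — the argument never needs to reverse the roles of the two arguments of $d$.
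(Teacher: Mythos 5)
Your proof is correct and follows essentially the same route as the paper: choose the subsequence recursively so that each new index exceeds $N$ at the current scale and exceeds the thresholds $K(n_i,\eps_i)$ of all earlier indices, after which the bound $d(x_{n_k},x_{n_m})<\eps_k$ is an immediate application of admissibility (no triangle inequality needed, as you note). The only cosmetic difference is that the paper imposes just the single constraint $K(n_j,1/2^j)$ at each step and recovers the bound for all earlier indices from the monotonicity conventions on $N$ and $K$, whereas you impose the finitely many constraints directly, which is equally valid.
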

\begin{proof}
For convenience let us denote $x(n) = x_n$. Since $\{x(n)\}$ is admissible, then for $\eps>0$ let $N(\eps)$ and for $n>N(\eps)$ let $K(n,\eps)$ be the constants from Definition \ref{add_seq_defn}.
Then the Cauchy subsequence will be given recursively by $n_1 = N(\frac{1}{2})$ and 
\[n_{j+1} = \max \left\{ N \left( \frac{1}{2^{j+1}} \right)+1 , K \left( n_j, \frac{1}{2^{j+1}} \right) \right\}\] 
For all $j>i$ we have $n_i>N(\frac{1}{2^i})$ and $n_j \geq K( n_{j-1}, \frac{1}{2^j})$ and by \ref{obs1}(3) $n_j \geq K( n_i, \frac{1}{2^i})$ hence $d(x(n_i), x(n_j))< \frac{1}{2^i}$. \end{proof}

We define the limiting distance between two admissible sequences. 

\begin{lemma}\label{dist_limit}
Let $\xi = \{x_n\}, \eta=\{y_n\}$ be forwards admissible then $\displaystyle c(\xi,\eta) = \lim_{n \to \infty} \lim_{k \to \infty} d(x_n,y_k)$. That is, one of the two options hold: 
\begin{enumerate}
\item\label{case1} For all $r>0$ there is an $N(r) \in \NN$ so that for all $n>N(r)$ there is a $K(n,r) \in \NN$ such that, \[ d(x_n, y_k) > r \quad \text{for all} \quad k>K(n,r)\]
In this case we write: $c(\xi,\eta) = \infty$.
\item\label{case2} There is a number $c(\xi,\eta)\geq 0$ such that for all $\eps>0$ there is an $N(\eps) \in \NN$ so that for all $n>N(\eps)$ there is a $K(n,\eps) \in \NN$ such that, \[| d(x_n, y_k) - c(\xi,\eta)| < \eps \quad \text{for all} \quad k>K(n,\eps)\]
\end{enumerate}
\end{lemma}

We shall need the following definition and proposition to prove this lemma. 

\begin{defn}\label{Defn:AlmostMonotonicallyDec}
A sequence $\{ r_n\}_{n=1}^{\infty}$ in $\RR$ is \emph{almost monotonically decreasing} if for every $\eps>0$ there is a natural $N(\eps)$ such that for all $n>N(\eps)$ there is a natural number $K(n,\eps)$ so that for all $k>K(n,\eps)$ 
\[ r_{k} \leq r_n+\eps \] 
$\{ r_n\}_{n=1}^{\infty}$ is almost monotonically increasing if $\{- r_n\}_{n=1}^{\infty}$ is almost monotonically decreasing.
\end{defn}
\begin{prop}\label{Prop:BoundedAndAlmostMonDec}
If $\{ r_i\}$ is almost monotonically decreasing and bounded below or almost monotonically increasing and bounded above then it converges. 
\end{prop}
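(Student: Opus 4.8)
The plan is to mimic the classical proof that a bounded monotone sequence converges, replacing "monotone" with "almost monotone" and being careful that the $\eps$-slack accumulates in a controlled way. I will treat the almost monotonically decreasing, bounded below case; the increasing case follows by applying it to $\{-r_i\}$.

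First I would set $L = \liminf_{i} r_i$, which exists and is finite since $\{r_i\}$ is bounded below (and, as I will note, bounded above too — see below). The claim is that $r_i \to L$. Fix $\eps > 0$. Since $L$ is the liminf, there is a subsequence $r_{i_\ell} \to L$, so choose $\ell$ large enough that $i_\ell > N(\eps/2)$ and $r_{i_\ell} < L + \eps/2$. Apply the almost-monotone property at the index $i := i_\ell$: there is $K(i,\eps/2)$ so that $r_k \le r_{i} + \eps/2 < L + \eps$ for all $k > K(i,\eps/2)$. That gives the upper bound $r_k < L + \eps$ eventually. For the lower bound, again use that $L$ is the liminf: there is $M$ such that $r_k > L - \eps$ for all $k > M$ (otherwise infinitely many terms would lie at or below $L-\eps$, contradicting $L = \liminf$). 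Combining, $|r_k - L| < \eps$ for all $k$ past both thresholds, so $r_i \to L$.

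One gap to close up front: the statement only assumes bounded below, but to form $\liminf$ as a real number and to run the subsequence argument I should first observe the sequence is automatically bounded above. Indeed, applying the almost-monotone property with $\eps = 1$ gives $N(1)$ and then, picking any fixed $i_0 > N(1)$, we get $r_k \le r_{i_0} + 1$ for all $k > K(i_0, 1)$; the finitely many remaining terms are bounded, so $\{r_i\}$ is bounded above. Hence $L = \liminf r_i \in \RR$ is well-defined, and also $L = \limsup r_i$ will follow a posteriori once convergence is established.

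The only real subtlety — the "main obstacle," though it is mild — is making sure the slack does not compound: one might worry that iterating "$r_k \le r_i + \eps$" along a chain of indices forces the bound to drift upward by $\eps$ each time. The fix is exactly the argument above: I do not iterate at all. I anchor to a single index $i_\ell$ that is already within $\eps/2$ of the liminf, and apply the almost-monotone inequality once from that anchor. This is why pinning down $L$ as the liminf first, rather than trying to build the limit by hand from the $N,K$ data, is the right move. The rest is a routine $\eps/2$ bookkeeping exercise, and the increasing-bounded-above case is obtained verbatim by negation.
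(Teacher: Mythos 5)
Your proof is correct and follows essentially the same route as the paper: show the sequence is bounded using the property with $\eps=1$, extract a subsequential limit, and get the upper bound by applying the almost-monotone inequality once from an anchor index already within $\eps/2$ of that limit. The only (minor) divergence is in the lower bound: the paper takes an arbitrary convergent subsequence with limit $R$ and applies the almost-monotone property a second time, comparing $r_k$ with a later subsequence element $r_{i_s}$, $i_s > K(k,\eps)$, to get $r_k \geq r_{i_s}-\eps > R-2\eps$, whereas you choose $L=\liminf r_i$ so that the lower bound is automatic from the definition of liminf; both are valid, and yours trades a second use of the hypothesis for the specific choice of subsequential limit.
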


\begin{proof} We shall prove the almost monotonically decreasing case. 
Since $\{ r_i\}$ is bounded, there is a subsequence $\{ r_{i_j}\}$ converging to some number $R$. 
We show that $R$ is in fact the limit of $\{ r_i\}$. Let $\eps>0$, and $M=M(\eps)$ be such that for $j \geq M$ we have $|r_{i_j} - R| < \eps$. 
Let $K = K(i_M,\eps)$ be the constant from Definition \ref{Defn:AlmostMonotonicallyDec}. So for any $k>K$ 
\begin{equation}\label{eq2321} r_k \leq r_{i_M} + \eps < R+2\eps \end{equation}
For the other inequality, let $K' = K(k,\eps)$ be the constant from the almost monotonically decreasing 
definition \ref{Defn:AlmostMonotonicallyDec}, and choose $s$ large enough so that $i_s>K'$ and $s>M$ then
\begin{equation}\label{eq24} R - \eps < r_{i_s} \leq r_k +\eps \end{equation}
From equations \ref{eq2321} and \ref{eq24} we get $|r_k-R| < 2\eps$.
\end{proof}

\begin{proof}[Proof of Lemma \ref{dist_limit}]
We must show that for every pair of admissible sequences $\xi = \{x_n\}$, $\eta = \{y_n\}$ 
satisfy either (1) or (2). Let $\eps>0$ fix $n \in \NN$ and construct the sequence $\al(n) = \{a_k\}_{k=n}^\infty$ in $\RR$ by defining $a_k = d(x_n,y_k)$ for $n \leq k \in \NN$. 
For a fixed $n$, the sequence $\al(n)$ is non-negative and almost monotonically decreasing. This follows from the triangle inequality and because $\eta$ is admissible. 
By Proposition \ref{Prop:BoundedAndAlmostMonDec} $\al(n)= \{a_k\}_{k=n}^\infty$ converges to a limit $c_n$.

We now show that $\{c_n\}_{n=1}^\infty$ is almost monotonically increasing.
Since $\lim_{j \to \infty} d(x_n, y_j) = c_n$ 
then for each $n$ let $J(n,\eps)$ be an index so that for all $j>J(n,\eps)$ \begin{equation}\label{eq25} |d(x_n,y_j) - c_n|< \eps. \end{equation} 
We can assume that $J(n,\eps)$ is monotonically increasing with $n$. 
Let $N:= N_{\ref{add_seq_defn}}(\xi,\eps)$ be the constant from Definition \ref{add_seq_defn} so that for all $n>N$, there is a $K_{\ref{add_seq_defn}}(\xi,n,\eps)$, so that $k>K_{\ref{add_seq_defn}}(\xi,n,\eps)$ we have $d(x_{n}, x_k)< \eps$. 
We take $n>N$, $j>K(n,\eps)$ and $t>\max\{j, J(j,\eps)\}$ (then $t>J(n,\eps)$ ), then we have:
\begin{align}
c_j & \geq d(x_j,y_t) - \eps \label{eqA} \\[0.2 cm]
&\geq d(x_n,y_t) - d(x_n,x_j) - \eps \label{eqB} \\[0.2 cm]
& \geq d(x_n,y_t) - 2 \eps \label{eqC} \\[0.2 cm]
&\geq c_n - 3 \eps \label{eqD}
\end{align}

Therefore $\{c_n \}$ is almost monotonically increasing. So either 
\begin{itemize}
\item $\{c_n\}$ is bounded above and hence converges to a limit $c$. This implies case \ref{case2} of the statement, or,
\item $\{c_n\}$ is unbounded and so $\xi, \eta$ satisfy case \ref{case1} of the statement. \qedhere
\end{itemize}
\end{proof}

\begin{obs}\label{cTriangleInequality}
The function $c( \cdot , \cdot )$ satisfies the triangle inequality. 
\end{obs}
\begin{proof}
Let $\xi = \{x_n\}, \eta=\{y_n\}, \zeta = \{z_n\}$ be admissible sequences, then for indexes $n<k<m$ we have:
\[ d(x_n, z_m) \leq d(x_n, y_k) + d(y_k, z_m) \] 
The triangle inequality for $c(\xi,\zeta)$ follows. 
\end{proof}

\begin{defn}\label{interlace}
Let $\xi = \{x_n\}$ and $\eta = \{y_n\}$ be sequences in $X$. We denote their interlace sequence 
by $j(\xi, \eta) = \zeta = \{ z_n \}$ which is given by: 
\[ z_n = \left\{ \begin{array}{ll} x_{\frac{n+1}{2}} & n \text{ odd } 
\\ y_{\frac{n}{2}} & n \text{ even } \end{array} \right. \] 
We call the admissible sequences $\xi$ and $\eta$ \emph{neighbors}, 
if their interlace is admissible. 
\end{defn}

\begin{lemma}\label{equiv_relation}
Let $\xi = \{x_n\}$ and $\eta = \{y_n\}$ be forwards admissible sequences then $\xi$
and $\eta$ are neighbors iff \[c(\xi,\eta) = 0 \text{ and } c( \eta,\xi)=0\]
\end{lemma}
\begin{proof}
Suppose the interlace $j(\xi,\eta) = \zeta = \{z_n\}$ is admissible. For any $n$, $x_n = z_{2n-1}$ and $y_n = z_{2n}$. Therefore, for large $n<k$
\[ d(x_n,y_k) = d(z_{2n-1},z_{2k})\]
which is small provided $2n-1 >N_{\ref{add_seq_defn}}(\zeta,\eps)$ and $2k> K_{\ref{add_seq_defn}}(\zeta, 2n-1, \eps)$. This proves that $c(\xi,\eta) = 0$ and similarly $c(\eta,\xi) =0$. 

For the other deduction assume that both sequences are forwards admissible and that 
both limits are $0$.
We must find for all $\eps>0$ a natural number $N=N(\eps)$ and for all $n>N$ a natural number $K = K(n,\eps)$ so that for all $n>N$ and $k>K$
\begin{equation}\label{eq33} d(z_n,z_k)<\eps \end{equation}
We now have four cases:
If both $n,k$ are odd then inequality \ref{eq33} follows from the admissibility of $\xi$. 
If both $n,k$ are even then inequality \ref{eq33} follows from the admissibility of $\eta$. 
If $n$ is odd and $k$ even 
then inequality \ref{eq33} follows from $c(\xi,\eta)=0$.
If $n$ is even and $k$ odd 
then inequality \ref{eq33} follows from $c(\eta,\xi)=0$. 
\end{proof}

\begin{prop}\label{c_well_defined_on_neighbors} Suppose $\xi,\xi'$ and $\eta, \eta'$ are neighbors respectively. Then, 
\[ c(\xi, \eta) = c(\xi', \eta'). \] 
\end{prop}

\begin{proof}
If $\xi, \xi'$ are neighbors and $\eta,\eta'$ are neighbors then by Observation \ref{cTriangleInequality} and Lemma \ref{equiv_relation} we have $c(\xi, \eta) \leq c(\xi,\xi') + c(\xi', \eta') + c(\eta',\eta) = c(\xi',\eta')$. 
\end{proof}

\begin{remark}\label{CFLnotnbrs}
Notice that if $\xi = \{x_n\}_{n=1}^\infty$ and 
$\eta = \{y_n\}_{n=1}^\infty$ are 
admissible sequences with the same closest forward limit then they are not necessarily neighbors. 
For example, consider $X$ in Example \ref{NonSymEx} and the sequences $x_n = 0$ and $y_n = \frac{1}{n}$ for all $n$. 
\end{remark}

\begin{remark}
By Lemma \ref{equiv_relation} and the triangle inequality we have that being 
neighbors is an equivalence relation. However, as noted in Remark \ref{CFLnotnbrs} this is not the equivalence relation we should use to define the forwards completion. 
\end{remark}

\begin{notation}
For $x \in X$ we denote the constant sequence $\{x\}_{n=1}^\infty$ by $\bold{x}$.
\end{notation}

\begin{defn}\label{defnEquiv}[Equivalent sequences, realization in $X$]
We call the forwards admissible sequences $\xi$ and $\eta$ equivalent if either they are neighbors or they have the same CFL. This is an equivalence relation (using Proposition \ref{ssProp}). \\
If an equivalence class $\al$ has a representative $\xi\in\al$ with a CFL in $X$ denoted $x$, then the constant sequence $\bold{x}$ also belongs to $\al$ and we say that $\al$ is realized by $x$ in $X$. 
\end{defn}

\begin{obs}\label{obs2}
\begin{enumerate}
\item There exists some representative $\xi \in \al$ such that $x = \overrightarrow{\lim} \xi$ if and only if for all $\xi \in \al$, $x = \overrightarrow{\lim} \xi$.
\item If $\al$ is not realized then for all $\xi, \xi' \in \al$, $\xi$ and $\xi'$ are neighbors. 
\end{enumerate}
\end{obs}

\begin{defn}\label{defHat}
Let $\hat X$ be the quotient set of forwards admissible sequences in $X$ by the equivalence relation in Definition \ref{defnEquiv}. 
We define the function on the set $\hat X$ 
\[ \begin{array}{l}
\hat d: \hat X \times \hat X \to \RR\cup \{\infty\}\\
\hat d([\xi],[\eta]) = \inf \{ c(\xi',\eta') \mid \xi\sim \xi' \text{ and } \eta \sim \eta' \}
\end{array}\]
\end{defn}

\begin{example}
It is possible that for $x, y \in X$, $d(x,y) > \hat d([\bold{x}], [\bold{y}])$.
Let $X = [0,1]$ and suppose $d(x,y) = x-y$ for $x>y$, $d(x,y) = 1$ for $0 \neq x<y$ and $d(0,y) = 2$ for all $y$. Then $d(0,1) = 2 > \lim_{k \to \infty} d(\frac{1}{k}, 1) = 1 = \hat d( \bold{0}, \bold{1})$. We therefore define the notion of a forwards continuous metric space. 
\end{example}


\begin{defn}\label{contDefn}
An asymmetric metric $d$ is \emph{forwards continuous} if for 
every forwards admissible sequence 
$\{x_m\}_{m=1}^\infty$ that admits a CFL $x$, and for any $y \in X$ we have 
\begin{align}
d(x, y) & = \lim_{m \to \infty} d( x_m,y ) \quad \text{ and } \quad \label{continuityEq1} \\
d(y,x) & = \lim_{m \to \infty} d(y, x_m ) \label{continuityEq2}
\end{align}
Similarly it is \emph{backwards continuous} if every backwards admissible sequence $\{x_m\}_{m=1}^\infty$ that admits a CBL $x$, and for any $y \in X$, equations (\ref{continuityEq1}) and (\ref{continuityEq2}) hold.
\end{defn}

\begin{remark}
The triangle inequality automatically implies that $d(y, \cdot)$ is lower semi-continuous and $d(\cdot, y)$ is upper semi-continuous. Explicitly, if 
$\displaystyle \lim_{m \to \infty} x_m = x$ then 
$d(x_m, y) \leq d(x,y) + \eps$ for large $m$ and 
$d(y, x_m) \geq d(y,x) - \eps$ for large $m$. 
\end{remark}

\begin{remark}\label{osFrwrdsCont}
We will show that Outer Space $\os$ with the Lipschitz metric is both forwards and backwards continuous (see Proposition \ref{conditions}). 
\end{remark}

\begin{example}\label{notMinimizing}
Even when $X$ forwards continuous, the infimum in $\hat d$ may not be realized.
Consider the space $X = \{x^i_n\}_{i,n=1}^\infty \coprod \{y_m\}_{m=1}^\infty \coprod \{0\}$ (see Figure \ref{infNotMin}). 
We define an asymmetric metric as follows:
\[ d(x^i_n, 0) = \frac{1}{n} \]
and 
\[\begin{array}{ll}
d(x^{i}_n, y_m) =
\left\{ \begin{array}{ll}
2 & n \geq m \\
1+ 1/i & n<m
\end{array} \right. , 
&
d(0,y_m) = 2 
\end{array} \]
We also have,
\[\begin{array}{ll}
d(x^i_n, x^i_k) = 
\left\{ \begin{array}{ll}
1/n-1/k & n < k \\
1 & n>k
\end{array} \right. , 
& 
d(0, x^i_k) = 1
\end{array}\]
We define 
\[d(y_n, y_m) = 
\left\{ \begin{array}{ll}
1/n-1/m & n < m \\
1 & n>m
\end{array} \right. 
\]
And the rest of the distances are equal to $1$, e.g. $d(y_m, x^i_k)$, $d(x^i_n, x^j_m)$ for $i \neq j$. One can directly check that the directional triangle inequality holds and therefore, $X$ is an asymmetric metric space.

\begin{figure}[ht]
\begin{center}
\includegraphics{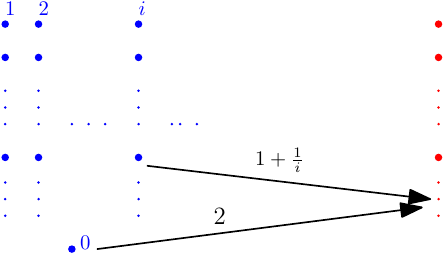}
\caption{\label{infNotMin}An example where the infimum in the definition of $\hat d$ is not realized. The sequences $\{x^i_n\}_{n=1}^\infty$ are the columns on the left and $\{y_m\}$ is on the right.}
\end{center}
\end{figure}

It is elementary to check that $X$ is continuous. The sequence $\eta = \{y_n\}$ is admissible as well as $\xi_i = \{x^i_n\}_{n=1}^\infty$ for all $i \geq 1$. Moreover, for each $i$, $0 = \fl{n} x^i_n$. Thus, $\xi_i \in [\bold{0}]$ for all $i$. We have
\[ c(\xi_i, \eta) = \lim_{n \to \infty} \lim_{m \to \infty} d(x_n^i, y_m) = 1 + \frac{1}{i}\]
Therefore $\hat{d}([\bold{0}], [\eta]) \leq 1$. However, if $\xi$ is an admissible sequence whose CFL is $0$ then either it eventually becomes the constant sequence $\bold{0}$ or there exists an $i$ so that it eventually becomes a subsequence of $\{x^i_n\}_{n=1}^\infty$. Thus, $c(\xi, \eta) > 1$ for any $\xi \in [\bold{0}]$. Hence $\hat d$ is not realized by any sequence. 
\end{example}

\begin{lemma}\label{infIsMin}
If $(X,d)$ is a forwards continuous metric space then
\begin{enumerate}
\item For all $\al, \beta \in \hat X$, if $\al$ is not realized then the infimum in $\hat d(\al, \beta)$ is a minimum, and 
\item If both $\al, \beta$ are not realized then 
\[ \hat d(\al,\beta) = c(\xi, \eta) \text{ for all } \xi \in \al, \eta \in \beta \]
\item If $\beta$ is realized by $y$ in $X$ then 
\[ \hat d(\al, \beta) = \inf \{ c(\xi, \bold{y}) \mid \xi \in \alpha\} \]
If $\al$ is not realized then for all $\xi \in \al$, $\hat d(\al,\beta) = c(\xi, \bold{y})$. 
\item If both $\al, \beta \in \hat X$ are realized by $x,y \in X$ then \[\hat d(\al,\beta) = d(x,y).\]
\end{enumerate}
\end{lemma}

\begin{proof}

To prove (3), for any admissible $\xi = \{x_m\} \in \al$ and any $\eta = \{y_k\} \in \beta$:
\[ \begin{array}{ll}
\displaystyle c(\xi,\eta) = \lim_{m \to \infty} \lim_{k \to \infty} d(x_m, y_k) & \displaystyle \geq 
\lim_{m \to \infty} \lim_{k \to \infty} d(x_m, y) - d(y_k , y) \\ 
\displaystyle & = \displaystyle
\lim_{m \to \infty} d(x_m,y) \\ 
\displaystyle & = c(\xi, \bold{y}) 
\end{array}\]
Moreover, if $\al$ is not realize then by Observation \ref{obs2} for all $\xi,\xi'\in\al$, $\xi$ and $\xi'$ are neighbors so by Proposition \ref{c_well_defined_on_neighbors}, $c(\xi,\bold{y}) = c(\xi',\bold{y})$. 
To prove (4), we have that for all $\xi = \{x_n\} \in \al$, $\fl{n} x_n = x$ and by continuity, $d(x,y) = \lim_{m \to \infty} d(x_m,y) = c(\xi,\bold{y})$. By (3), $d(x,y) = \hat d( \al,\beta)$.
To prove (2), if both $\al$, $\beta$ are not realized then for all $\xi,\xi' \in \al$ and $\eta, \eta' \in \beta$ we have $c(\xi, \eta) = c(\xi',\eta')$ by Observation \ref{obs2} and by Proposition \ref{c_well_defined_on_neighbors}. Thus, the infimum is taken over a set of one element. 
To prove (1), the case where $\beta$ is not realized follows from (2) and if $\beta$ is realized this follows from (3). 
\end{proof}

\begin{defn}\label{miniDefn}
Let $\al \in \hat X$ if $\xi \in \al$ is an admissible sequence in $X$ such that 
\begin{equation*} 
\hat d(\al, \beta) = \inf \{c(\xi, \eta) \mid \eta \in \beta\} 
\end{equation*}
for all $\beta \in \hat X$ then $\xi$ is called a good representative of $\al$. If each $\al \in \hat X$ admits a good representative then we shall say that $X$ is \emph{minimizing}. 
\end{defn}

\begin{remark}
Notice that if $\al$ is realized by $x$ then $[\bold{x}]$ is not necessarily a good representative (see Example \ref{notMinimizing}). 
\end{remark}

\begin{prop}\label{realizationOfDistance}
If $(X,d)$ is forwards continuous and minimizing then for $\xi$ a good representative of $\al$ , 
\[\hat d(\al, \beta) = c(\xi, \eta) \text{ for all } \eta \in \beta \]
\end{prop}
\begin{proof}
If $\beta$ is not realized then for all $\eta' \in \beta$,
\[ \hat d(\al,\beta) = \inf \{ c(\xi, \eta) \mid \eta \in \beta\} = c(\xi, \eta') \]
the last inequality follows since all elements of $\beta$ are neighbors. If $\beta$ is realized by $y$ then by Lemma \ref{infIsMin},
\[ \hat d(\al, \beta) = \inf \{ c(\xi', [\bold y]) \mid \xi' \in \al \} = c(\xi, [\bold y]) \]
Let $\xi = \{x_n\}$ and $\eta = \{y_m\}$ then by continuity,
\[ c(\xi, \eta) = \lim_{n \to \infty} \lim_{m \to \infty} d(x_n, y_m) = c(\xi, [\bold{y}]) \qedhere \]
\end{proof}

\begin{lemma}\label{triDHatLemma}
If $(X,d)$ is forwards continuous and minimizing then the function $\hat d$ satisfies the triangle inequality.
\end{lemma}
\begin{proof}
Let $\al,\beta,\gamma \in \hat X$ and let $\xi,\eta,\nu$ be good representatives of these classes respectively. Then,
$\hat d( \al , \beta ) = c( \xi,\eta )$, 
$\hat d( \al , \gamma) = c(\xi , \nu)$ and $\hat d(\gamma ,\beta ) = c(\nu , \eta )$. The triangle inequality of $\hat d$ now follows from Observation (\ref{cTriangleInequality}). 
\end{proof}

\begin{defn}\label{defnCompl}
Let $(X,d)$ be a forwards complete, minimizing, asymmetric metric space, then $(\hat X, \hat d)$ from Definition \ref{defHat} is an asymmetric metric space called the forwards completion of $(X,d)$.
\end{defn}

\begin{remark}
One way in which $\hat d$ may differ from $d$ is the separation axioms it satisfies. 
The function $d$ might satisfy \[ d(x,y) = 0 \implies x=y\] even while $\hat d$ may not. 
This in fact occurs in $\hat \os$ (see the proof of  Proposition \ref{isomExtend}). 
\end{remark}

We will need the following technical lemma. 

\begin{lemma}\label{convenientSubsequences}
If $\{\al_n\}$ is a sequence of admissible sequences so that $c(\al_n, \al_{n+1}) = c_{n,n+1}$ then for each $n$ there is a subsequence $\al_n'$ of $\al_n$ with $\al_n' = \{x_{n,k}\}_{k=1}^\infty$ so that for all $n$ and $k<k'$ we have
\begin{enumerate}
\item $d(x_{n,k}, x_{n,k'})\leq \frac{1}{2^k}$.
\item $d(x_{n,k}, x_{n+1,k'}) \leq c_{n,n+1} + \frac{1}{2^{nk}}$.
\end{enumerate} 
\end{lemma}

\begin{proof}
\textbf{Step 1:}
Using Proposition \ref{addConCauchy} we first extract subsequences $\al'_n$ of $\al_n$ so that property (1) holds. Note that $c(\al'_n, \al'_j) = c(\al_n, \al_j)$ for all $n,j$. 

\textbf{Step 2:}
Let $\al'_n = \{x_{n,k}\}$ be the subsequences extracted in Step 1. We now recursively extract subsequences of $\al_n'$ that satisfy Property (2) in the statement of the Lemma. Note that every subsequence of $\al_n'$ satisfies Property (1). 

Since $c(\al'_1, \al'_2) = c_{1,2}$ then for all $i \geq 1$ there exists a $K(i)$ so that for all $k \geq K(i)$ there exists a $J(k,i)$ satisfying $|d(x_{1,k}, x_{2,j}) - c_{1,2}| < \frac{1}{2^i}$. We take the subsequence $x_{1,i}' = x_{1,K(i)}$ of $\al_1'$, and (temporarily) take the subsequence of $\al'_2$ with indices $J(K(i), i)$. Thus for all $i$, the new sequences which we denote by $\{x_{1,i}'\}, \{x_{2,i}'\}$ satisfy \[ |d(x_{1,i}', x_{2,i}')-c_{1,2}|<\frac{1}{2^i}.\]
Notice that this inequality still holds for any subsequence of $x_{2,i}'$.

We now suppose that we have extracted subsequences of $\al'_n$ for $n \leq N$ that satisfy property (2) for each $n \leq N-1$. In the induction step we extract a subsequence of $\al'_{N}$ and a (temporary) subsequence of $\al'_{N+1}$ that satisfy property (2) for $n=N$. \\
Since $c_{N,N+1} = c(\al'_N, \al'_{N+1})$, then for each $i$ we will let $\eps = \frac{1}{2^{iN}}$ and thus there exists a natural number $K(i)$ so that for all $k>K(i)$ there exists a natural number $J(k,i)$ satisfying for all $j>J(k,i)$, 
\[ |d(x_{N,k}, x_{N+1,j})-c_{N,N+1}|< \frac{1}{2^{Ni}}.\]
We take the subsequence $x_{N, K(i)}$ of $\al'_{N}$ and the subsequence of $\al'_{N+1}$ whose indices are $J(K(i), i)$. We denote the new sequences by $\{x'_{N, i}\}$ and $\{x'_{N+1,i}\}$. Then for all $i$ we have: 
$x'_{N,i} = x_{N,K(i)}$ and for $j \geq i$ we have $x'_{N+1, j} = x_{N+1, J(K(j),j)}$ and $J(K(j), j) \geq J(K(i), i)$, thus 
\[d(x'_{N, i}, x'_{N+1, j})< \frac{1}{2^{Ni}} \text{ for } j\geq i.\] 
The subsequence of $\al'_{N+1}$ will be changed to a new subsequence in the next step but this process does not ruin property (2) for $n=N$. 
\end{proof}

\begin{definition}
An asymmetric space $(Y,\rho)$ is
\emph{forwards complete} if every forwards admissible 
sequence $\{ y_n \}$ in $Y$ has a closest forward limit. 
\end{definition}

\begin{lemma}\label{bigLemma}
If $(X,d)$ is a forwards continuous, minimizing, asymmetric metric space then its forward completion $(\hat X, \hat d)$ is forwards complete. 
\end{lemma}
\begin{proof}
Let $\{\al_n\} \subset \hat X$ be an admissible sequence, we will first show that it has a forwards limit $\al \in \hat X$. By Proposition \ref{ssProp} and Proposition \ref{addConCauchy}, we may switch $\{\al_n\}$ by a subsequence so that 
\begin{equation}\label{eq55} \hat d (\al_n,\al_m)< \frac{1}{2^n} \text{ for } 1 \leq n < m. \end{equation}
By Proposition \ref{realizationOfDistance} we may choose 
sequences $\xi_n$ so that for all $n$:
\[ c(\xi_n, \xi_{n+1}) = \hat d(\al_n, \al_{n+1}) = \frac{1}{2^n}.\]
We can now use Proposition \ref{convenientSubsequences} to extract subsequences $\xi'_n$ of $\xi_n$ so that for $\xi'_n = \{x_{n,k}\}_{k=1}^\infty$ we have for all $n$ and for all $k\leq j$:
\begin{enumerate}
\item $d(x_{n,k}, x_{n,j}) < \frac{1}{2^k}$, and
\item $d(x_{n,k}, x_{n+1,j})< \frac{1}{2^n} + \frac{1}{2^{nk}}$
\end{enumerate}
We now let $\xi = \{x_{n,n}\}$ and note that it is a Cauchy sequence. We also see that for $k>n$ we have:
\[ d(x_{n,k}, x_{k,k}) \leq \sum_{j=n}^{k-1} d(x_{j,k}, x_{j+1, k}) < \sum_{j=1}^{k-1} \frac{1}{2^j} + \frac{1}{2^{jk}} \leq \frac{1}{2^{n-2}}\] 
We therefore see that $\lim_{n \to \infty} c(\xi_n, \xi) =0$ hence denoting $\al = [\xi]$ we have that $\al$ is a forward limit of $\al_n$. 

We claim that it is the closest forward limit. Indeed, let $\zeta = \{z_m\}$ so that $[\zeta]$ is a forward limit of $\al_n$. The same sequences 
$\{\xi_n\}$ satisfy $c(\xi_n,\zeta) = \hat d(\al_n, [\zeta]) = 0$. Thus, for all $\eps$ there exists an $N(\eps)$ so that for all $n>N(\eps)$ there is an $M(n, \eps)$ such that for $m\geq M(n,\eps)$ there is a $K(m,n,\eps)$ satisfying
\[ d(x_{n,m}, z_k)< \eps \text{ for } k>K(m,n,\eps) \]
This implies for all $n>N(\eps)$ and $k> K(M(n,\eps),n, \eps)$ that
\[ d(x_{n,n}, z_k) \leq d(x_{n,n}, x_{n,M(n,\eps)}) + d(x_{n,M(n, \eps)}, z_k) \leq \frac{1}{2^{n-1}}+\eps \]
Hence $c(\xi, \zeta) = 0$. This implies that $\al = [\xi]$ is the CFL of $\{\al_n\}$. 

\end{proof}

\begin{prop}\label{hatXsatisfies11}
Under the assumptions of Lemma \ref{bigLemma}, $\hat X$ need not necessarily be forwards continuous but it does satisfy Equation (\ref{continuityEq1}) of the definition of forwards continuous. 
\end{prop}
\begin{proof}
The example in Remark \ref{remarkBadExample} shows that $\hat X$ need not be forwards continuous. We will show that it satisfies Equation (\ref{continuityEq1}). Let $\al \in \hat X$ be the CFL of the admissible sequence $\{\al_n\} \subset \hat X$. 
Let $\beta \in \hat X$ be any point, we wish to show that $\hat d(\al, \beta) = \lim_{n \to \infty} \hat d( \al_n, \beta)$. 
As before we find sequences $\xi_n = \{x_{n,k}\}, \eta = \{y_m\} \subset X$ so that $\eta$ is a good representative of $\beta$ and $\xi_n$ are good representatives of $\al_n$. We extract subsequences satisfying items (1)+(2) as before, and denote by $\xi = \{x_{n,n}\}$. 
Then since $\al$ and $[\xi]$ are both CFLs, $\xi \in \al$. For $\eps>0$ let $N(\eps)$ be so that for all $n>N(\eps)$, $c(\xi_n, \eta) = \hat d(\al_n, \beta)$ is within $\eps$ of $L = \lim_{n \to \infty} \hat d(\al_n, \beta)$. Let $K(n,\eps)$ and for $k>K(n,\eps)$ let $M(n,k,\eps)$ be as in Definition \ref{dist_limit}, then take $k= \max\{n,K(n,\eps)\}$ and $m > M(n,k,\eps)$ then
\[ d(x_{n,n}, y_m) \leq d(x_{n,n}, x_{n,k}) + \hat d(x_{n,k}, y_m) \leq \frac{1}{2^n} + c(\xi_n,\eta) + \eps \leq L + 3 \eps\] for large $n$. 
This implies $\hat d(\al, \beta) \leq c(\xi, \eta) \leq L$. 
Moreover, $\hat d(\al_n, \beta) \leq \hat d(\al_n , \al) + \hat d(\al, \beta)$ implies $L \leq \hat d(\al,\beta)$ which gives the equality we need.
\end{proof}

\begin{defn}\label{denseDefn}
If $X \subset Y$ and for all $y \in Y$, there exists a forwards admissible sequence $\{y_n\} \subset X$ such that $y = \fl{n} y_n$ then we will say that $X$ is forward dense in $Y$. 
\end{defn}

\begin{lemma}\label{embedding}
The map $\iota: X \hookrightarrow \hat X$ defined by $\iota(x) = [\bold x]$  is an isometric embedding and its image $\iota(X)$ is forward dense. 
\end{lemma}
\begin{proof}
The fact that $\iota$ is an isometric embedding follows from Lemma \ref{infIsMin}(4). Let $\al \in \hat X$ then there exists a forwards admissible sequence $\xi \subset X$ such that $\al = [\xi]$. If $\xi = \{x_n\}_{n=1}^\infty$ then for $\eps>0$ since $\xi$ is admissible, let $n>N_{\ref{add_seq_defn}}(\eps)$ and $k>K_{\ref{add_seq_defn}}(n,\eps)$ 
\[ c(\bold{x_n}, \xi) = \lim_{k \to \infty} d(x_n, x_k) <\eps. \]
Thus, $\hat d(\iota(x_n), \al)< \eps$. Hence $\al$ is a forward limit of the sequence $\{ \iota(x_n)\}$. Moreover, if $\beta \in \hat X$ so that $\lim_{n \to \infty} \hat d(\iota(x_n), \beta) = 0$. Then there exists an admissible sequence in $X$, $\eta = \{y_m\} \in \beta$, so that for all $\eps$ there exists an $M(\eps)$ so that for all $n>M(\eps)$, $c(\iota(x_n),\eta)<\eps$. Thus, there exists a $J(n,\eps)$ so that
$d(x_n,y_j)<\eps$ for all $j>J(n,\eps)$. But this exactly means that $c(\xi, \eta)<\eps$ and hence $\hat d(\al,\beta) <\eps$. Thus $\al$ is the closest forward limit of $\{\iota(x_n)\}$. 
\end{proof}

The space $\hat X$ will not necessarily be forwards continuous (Definition \ref{contDefn}) as we shall later see in Remark \ref{remarkBadExample}. 
However, we need some continuity property of $\hat X$ to hold in order to extend an isometry of $X$ to $\hat X$. 
We weaken the notion of forwards continuous to continuity with respect to a subspace as follows.

\begin{defn}\label{semiForwardsContinuous}
Suppose $(Y,d)$ is an asymmetric metric space and $X \subset Y$. Suppose that for all $a,b \in Y$ and all forwards admissible sequences $\{a_n\}, \{b_n\} \subset X$ such that $\fl{n}a_n = a$ and $\fl{n}b_n=b$ we have,
\[ d(a,b) = \lim_{n \to \infty} \lim_{k \to \infty} d(a_n, b_k) = c(\{a_n\}, \{b_n\}) \]
then we say that $Y$ is semi forwards continuous with respect to $X$. 
\end{defn}

For $Y = \hat X$ we will say that $\hat X$ is semi forwards continuous with respect to $X$ when we mean with respect to $\text{Im}(\iota)$.

\begin{remark}\label{remarkBadExample}
$Y$ may be forwards continuous and minimizing but still $\hat Y$ may NOT be semi-forwards continuous with respect to $Y$. Indeed consider the space $X$ of Example \ref{notMinimizing}. Let $Y = X \coprod \{x^0_n\}_{n = 1}^\infty$ and extend the distance:
\[ d(x^0_n, x^0_m) = \left\{ \begin{array}{ll} 
\frac{1}{n}-\frac{1}{m} & n<m \\
1 & n>m
\end{array} \right. , \quad d(x^0_n,0) = \frac{1}{n} , \quad 
d(x^0_n, y_m) = \left\{ \begin{array}{ll} 
1 & n<m \\
2 & n>m
\end{array} \right.
\]
and the unmentioned new distances are equal to $1$. 
Then $Y$ is forwards continuous (since $X$ was) but now it is also minimizing since for $\eta = \{y_m\}$ there exists $\xi = \{x^0_n\}_{n=1}^{\infty} \in [\bold{0}]$ such that 
\[ \hat d([\bold{0}], [\eta]) = c(\xi,\eta) = 1.\]
However, $\hat Y$ is not semi forwards continuous with respect to $Y$ since for $\bold{0}$, $c(\bold{0}, \eta) = 2 > c(\xi, \eta) = 1$. So we have a sequence $a_n = [\bold{0}] \in \text{Im}(\iota)$ and $b_m = [\bold{y_m}] \in \text{Im}(\iota)$ with $\fl{n} a_n = [\bold{0}]$ and $\fl{n}b_n = [\eta]$ so that 
\[ 2 = \lim_{n \to \infty} \lim_{m \to \infty} \hat d(a_n, b_m) \neq \hat d([\bold{0}],[\eta]) =1\] contradicting the semi forwards continuous property. 
\end{remark}

It is not so clear how to check that $X$ is minimizing or that $\hat X$ is semi-forwards continuous with respect to $X$. The following Lemma gives a sufficient condition for both.

\begin{lemma}\label{keyLemma}
Suppose that $(X,d)$ is an asymmetric metric space so that: 
\begin{equation}\label{star}
\begin{array}{l}
\text{If } \{x_n\}_{n =1}^\infty \subset X \text{ is a forwards admissible sequence that has a} \\
\text{CFL } x \in X 
\text{ then } \{x_n\} \text{ is also backwards admissible and } x \text{ is }\\ \text{its CBL.} 
\end{array}\tag{*}
\end{equation}
If $X$ is forwards continuous then $X$ is minimizing and $\hat X$ is semi forwards continuous with respect to $X$. 
\end{lemma}
\begin{proof}
First notice that if $\xi, \xi'$ are forwards admissible sequences with the same CFL $x$ then they are neighbors. Indeed since $\lim_{k \to \infty} d(x_k',x) = 0$ then $\lim_{k \to \infty} d(x,x'_k) = 0$. Therefore, $d(x_n, x'_k) \leq d(x_n, x)+ d(x, x'_k)$ limits to $0$ as $n,k \to \infty$. 

Now, for every $\al \in \hat X$, and for every $\xi,\xi' \in \al$, $\xi$ and $\xi'$ are neighbors (whether or not $\al$ is realized). This implies by Lemma \ref{c_well_defined_on_neighbors} that
\[ \hat d(\al,\beta) = c(\xi,\eta) \quad \text{ for every } \xi\in \al, \eta\in\beta. \]
In particular $X$ is minimizing.

To show that $\hat X$ is semi forwards continuous with respect to $X$, let $\al,\beta \in \hat X$ and $\{\al_n\}, \{\beta_k\} \subset \text{Im}(\iota)$ forwards admissible sequences with $\al=\fl{n} \al_n$ and $\beta = \fl{n}\beta_n$. 
We have to show that $\hat d(\al, \beta) = \lim_{n \to \infty} \lim_{k \to \infty} \hat d(\al_n,\beta_k)$. 
There exist $a_n, b_n \in X$ such that $\iota(a_n) = \al_n$ and $\iota(b_n) = \beta_n$. 
Note that $\hat d(\al_n,\beta_k) = c(\bold{a_n}, \bold{b_k}) = d(a_n,b_k)$.
Moreover, $\fl{n} \al_n = \al$ implies that $\{a_n\}_{n=1}^\infty \in \al$ and similarly, $\{b_n\}_{n=1}^\infty \in \beta$. 
Therefore,
\[ \lim_{n \to \infty} \lim_{k \to \infty} \hat d(\al_n,\beta_k) = 
\lim_{n \to \infty} \lim_{k \to \infty} d(a_n,b_k) = c(\{a_n\}, \{b_n\}) = \hat d(\al,\beta).\qedhere \]
\end{proof}


\begin{lemma}\label{connection}
If $X$ is forwards continuous, and $\hat X$ is semi-forwards continuous with respect to $X$ then $X$ is minimizing.
\end{lemma}
\begin{proof}
We remark that we will not use the directed triangle inequality for $\hat X$, for which we assumed that $X$ was minimizing. 
By Lemma \ref{infIsMin} we know that $\hat d(\al,\beta)$ is a minimum for all $\al \in \hat X$ that are not realized. Now assume $\al$ is realized by $x$. 
We need to show that for all $\beta \in \hat X$, $\hat d(\al,\beta) = \min \{ c(\xi,\eta) \mid \xi \in \al, \eta \in \beta \}$. 
Let $\xi \in \al$ then $\xi = \{x_n\} \subset X$ and $\fl{n} x_n = x$ and let $\eta = \{y_n\} \subset \beta$. Then $\al_n = \iota(x_n) \in \text{Im}(\iota)$ satisfies $\fl{n} \al_n =\al$, and similarly for $\beta_n = \iota(y_n) \in \text{Im}(\iota)$, $\fl{n} \beta_n = \beta$. Note that by Lemma \ref{infIsMin}, $\hat d(\al_n, \beta_k) = c(\bold{x_n}, \bold{y_k}) = d(x_n, y_k)$. By semi forwards continuity,
\[ \hat d(\al, \beta) = \lim_{n \to \infty} \lim_{k \to \infty} \hat d(\al_n, \beta_k) = \lim_{n \to \infty} \lim_{k \to \infty} d(x_n, y_k) = c(\xi,\eta)\]
Thus $\xi$ and $\eta$ realize the distance for every $\xi \in \al$ and $\eta \in \beta$. 
\end{proof}

We now prove that we can extend an isometry of $X$ to $\hat X$. 

\begin{theorem}\label{extIsom}
Let $X$ be a forwards continuous asymmetric metric space and suppose $\hat X$ is semi forwards continuous with respect to $X$. 
Suppose $(Y,\rho)$ is a forwards complete asymmetric metric space and let
\[ f:(X,d) \into (Y,\rho)\] be an isometric embedding so that $Y$ is semi forwards continuous with respect to $\text{Im}(f)$. Then there exists a 
lift of $f$ to an isometric embedding $F:(\hat X, \hat d) \into (Y, \rho)$ so that $f = F \circ \iota$.
\end{theorem}
\begin{proof}
By Lemma \ref{connection}, $X$ is minimizing. 
Let $\al \in \hat X$ and let $\xi = \{x_k\} \subset X$ be an admissible sequence so that $\lim_{k \to \infty} \iota(x_k) = \al$. 
The sequence $\{f(x_k)\}$ is admissible in $Y$ and $Y$ is forwards complete, hence it has a CFL $y$. We define $F(\al) = y$. 

We now check that $F$ is well defined: If $\al \in \hat X - \iota(X)$ then for all $\xi, \xi' \subset X$ such that the CFL of both $\iota(\xi), \iota(\xi')$ is $\al$, we have $\xi,\xi' \in \al$. Since $\al$ is not realized, then $\xi,\xi'$ are neighbors. Therefore, $f(\xi), f(\xi')$ are neighbors and therefore, they have the same CFL in $Y$. Now for the other case, if $\al \in \iota(X)$ then let $\al = \iota(x)$ and $\xi = \{x_n\}$ and $\xi' = \{x_n'\}$ sequences in $X$ so that $\fl{n} x_n = \fl{n} x_n' = x$. Denote by $y,y'$ the CFLs of $f(\xi),f(\xi')$. Since $Y$ is semi forwards 
continuous with respect to $\text{Im}(f)$ then $\rho(y,y') = \lim_{n \to \infty} \lim_{k \to \infty} \rho( f(x_n), f(x_k'))$. Since $f$ is an isometry $\rho(f(x_n), f(x_k')) = d(x_n, x_k)$. Therefore $\rho(y,y') = c(\xi,\xi')$. Since $X$ is forwards continuous we have $c(\xi,\xi') = \lim_{n \to \infty} \lim_{k \to \infty} d(x_n, x_k') = \lim_{n \to \infty} d(x_n, x) = 0$. Thus, $\rho(y,y') = 0$ and similarly $\rho(y',y) = 0$. This concludes the argument that $F$ is well defined. 

Given $\al,\beta \in \hat X$ let $\xi = \{x_n\} \in \al$ and $\eta = \{x'_n\} \in \beta$ then in $\hat X$ we have $\hat d(\al,\beta) = c(\xi, \eta)$ since $\hat X$ is semi forwards continuous with respect to $X$. We denote by $F(\al)$ the CFL of $f(\xi)$ and by $F(\beta)$ the CFL of $f(\eta)$. 
Since $Y$ is semi forwards continuous with respect to $\text{Im} f$ we have that $\rho(F(\al), F(\beta)) = \lim_{n \to \infty} \lim_{k \to \infty} \rho(f(x_n), \rho(f(x'_k))$. Since $f$ is an isometric embedding,
\[ \hat d(\al, \beta) = \lim_{n \to \infty} \lim_{k \to \infty} d(x_n, x'_k) = \lim_{n \to \infty} \lim_{k \to \infty} \rho(f(x_n), f(x'_k)) = \rho(F(\al), F(\beta)) \]
Hence $F$ is an isometric embedding. 
\end{proof}

\begin{example}
This is an example where the extension $F$ of $f$ is not unique. 
Let $X = (0,1]$ with the asymmetric distance from Example \ref{NonSymEx}. Let $Y = X \cup \{0\} \cup\{0'\}$ with $d(t,0) = t = d(t,0')$ for all $t \in X$, and $d(0,t) = d(0',t) = 1$ and $d(0,0') = 0$ and $d(0', 0) =1$. Then $0$ is the CFL of $\{ \frac{1}{n}\}$ and $0'$ is a forward limit of $\{\frac{1}{n}\}$. The inclusion $f \from X \to Y$ is an isometric embedding. $X$ is forward continuous and minimizing, $Y$ is semi forwards continuous with respect to $X$. However, $f$ has two extensions to $\hat X$. One can define $F([\{\frac{1}{n}\}]) = 0$ or to $0'$. 
\end{example}

\begin{prop}\label{uniqueness}
Under the hypothesis of Theorem \ref{extIsom}, if $\text{Im}(f)$ is forwards dense in $Y$ then $F$ is unique. 
\end{prop}
\begin{proof}
Let $F \from \hat X \to Y$ be an isometric embedding that extends $f$. Let $\al \in \hat X$, and $\xi = \{x_n\} \subset X$ a forwards admissible sequence such that $\xi \in \al$. Let $y$ be the CFL of $f(\xi)$. Then since $F$ is an isometric embedding, we have,
\[ \begin{array}{l}
\displaystyle \lim_{n \to \infty} \rho(f(x_n), F(\al)) =
\lim_{n \to \infty} \rho(F \circ \iota(x_n), F(\al)) = \\
\displaystyle \lim_{n \to \infty} \hat d(\iota(x_n), \al) = 
\lim_{n \to \infty} \lim_{k \to \infty} d(x_n, x_k) = 0.
\end{array}\] Because $y$ is the CFL of $\{f(x_n)\}_{n=1}^\infty$ by the definition of CFL we have $\rho(y,F(\al))=0$. 

Conversely, since $\text{Im}(f)$ is forwards dense in $Y$, $F(\al)$ is a CFL of some forwards admissible sequence in $\text{Im}(f)$. Thus there exists a forwards admissible $\zeta = \{z_n\} \subset X$ so that $F(\al)$ is the CFL of $f(\zeta)$. Since $Y$ is semi forwards continuous with respect to $\text{Im}(f)$ and $F(\al) = \fl{n} f(z_n)$ and $y = \fl{n} f(x_n)$ then we have
\begin{equation}\label{eq444} 
\rho(F(\al),y) = \lim_{n \to \infty} \lim_{k \to \infty} \rho( f(z_n), f(x_k)) = 
\lim_{n \to \infty} \lim_{k \to \infty} d( z_n, x_k) 
\end{equation}
Since $F(\al)$ is the CFL of $f(\xi)$ then 
$\lim_{n \to \infty} \rho(f(z_n), F(\al)) = 0$. Therefore, $\lim_{n \to \infty} \hat d(\iota(z_n), \al) = 0$. 
Since $\hat X$ is forwards continuous with respect to $X$, we have 
$\lim_{n \to \infty} \lim_{k \to \infty} d(z_n, x_k) =0$. Along with equation \ref{eq444} we get $\rho(F(\al),y) = 0$. 
\end{proof}

\begin{cor}\label{isomX}
For any forwards continuous asymmetric space $(X,d)$ which satisfies property (\ref{star}), 
each isometry $f \from X \to X$ induces a unique isometry $F \from \hat X \to \hat X$. 
\end{cor}
\begin{proof}
Since $X$ satisfies property (\ref{star}) and is forwards continuous we have that $X$ is minimizing and $\hat X$ is semi-forwards continuous with respect to $X$ (Lemma \ref{keyLemma}). By Lemma \ref{embedding}, $\iota(X)$ is forwards dense in $\hat X$. An isometry $f \from X \to X$ induces an isometric embedding $\iota \circ f \from X \to \hat X$ and by Theorem \ref{extIsom}, $\iota \circ f$ extends to an isometric embedding $F \from \hat X \to \hat X$. Since $\iota(X)$ is forwards dense, $F$ is an isometry. Moreover, by Proposition \ref{uniqueness}, $F$ is unique. 
\end{proof}

We finally prove our main theorem.

\begin{proof}[Proof of Theorem \ref{ThmCompl}]
The existence part of the theorem follows from Lemma \ref{keyLemma}, Definition \ref{defnCompl}, Lemma \ref{bigLemma} and Lemma \ref{embedding}.
Now suppose that $(Y,\rho)$ is a forwards complete asymmetric metric space and $f \from X \to Y$ is an isometric embedding so that $Y$ is semi forwards continuous with respect to $\text{Im}(f)$ and $\text{Im}(f)$ is forwards dense in $Y$. Then by Theorem \ref{extIsom} $f$ extends to an isometric embedding $F \from \hat X \to Y$ and by density $F$ is an isometry. Thus $(Y,\rho)$ is isometric to $(\hat X, \hat d)$. 
\end{proof}

\section{Background on Outer Space} \label{secOuterSpace}

In this section we review the graph and tree approaches to Outer Space. We then formulate a lifting proposition that allows us to lift an affine difference in marking of graphs to an affine equivariant map of $F_n$-trees with specified data, see Proposition \ref{prop_lift}. We end the section with a review of the boundary of Outer Space. 

\subsection{Outer Space in terms of marked graphs}

Fix a basis $\{ x_1, \dots x_n \}$ of $F_n$. When we talk of reduced words we will mean relative to this basis.

\begin{defn}[The rose]
Let $R$ be the wedge of $n$ circles, denote the vertex of $R$ by $*$. We identify $x_i$ with the positively oriented edges of $R$. This gives us an identification of $\pi_1(*,R)$ with $F_n$ that we will suppress from now on. 
\end{defn}
\begin{defn}[Outer Space]
A point in outer space is an equivalence class of a triple $x = (G,\tau,\ell)$ where $G$ is a graph (a finite 1 dimensional cell complex), $\tau \co R \to G$ and $\ell \co E(G) \to (0,1)$ are maps, and $(G,\tau,\ell)$ satisfy:
\begin{enumerate}
\item the valence of $v \in V(G)$ is greater than $2$.
\item $\tau$ is a homotopy equivalence. 
\item $\sum_{e \in E(G)} \ell(e) = 1$.
\end{enumerate}
The equivalence relation is given by: $(G,\tau,\ell) \sim (G',\tau',\ell')$ if there is an isometry $f \co (G,\ell) \to (G',\ell')$ such that $f \circ \tau$ is freely homotopic to $\tau'$. We shall sometimes abuse notation and use the triple notation $(X,\tau,\ell)$ to mean its equivalence class. 
\end{defn}

We will always identify words in $F_n$ with edge paths in $R$, note that reduced words are identified with immersed paths in $R$. Using this identification, an automorphism $\phi:F_n \to F_n$ can be viewed as an affine map $\phi: R \to R$.
\begin{defn}[$\out$ action] 
There is a right $\text{Aut}(F_n)$ action on the set of metric marked graphs given by: $x \cdot \phi = (G, \tau \circ \phi, \ell)$. 
This action is constant on equivalence classes, and inner automorphisms act trivially. Therefore, this action descends to an $\out$ action on $\os$. 
\end{defn}

\subsection{Outer Space in terms of tree actions}
An equivalent description of Outer Space is given in terms of minimal free simplicial metric $F_n$-trees. 
\begin{defn}[Tree definition of $\os$]
Outer Space $\os$ is the set of equivalence classes of pairs $(X,\rho)$ where $X$ is a metric tree, and $\rho: F_n \to \isom(X)$ is a homomorphism and the following conditions are satisfied: 
\begin{enumerate}
\item The action is free - if $\rho(g)(p) = p$ for $p \in X$ and $g \in F_n$ then $g = 1$.

\item $X$ is simplicial - for any $1 \neq g \in F_n$, the translation length 
\[ l(\rho(g),T) := \inf \{ d(x, \rho(g) x) \mid x \in T\}\] 
is bounded away from zero by a global constant independent of $g$.
\item The action is minimal - no subtree of $X$ is invariant under the group $\rho(F_n)$.
\item The action is normalized to have unit volume - $X/\rho(F_n)$ is a finite graph whose sum of edges is $1$.
\end{enumerate}
The equivalence relation on the collection of $F_n$ tree actions is defined as follows: $(X,\rho) \sim (Y, \mu)$ if there is 
an isometry $f \co X \to Y$ with $f^{-1} \circ \mu(g)\circ f (x) = \rho(g)(x)$. In this case 
$(X,\rho)$, $(Y,\mu)$ are called isometrically conjugate.
\end{defn}

\begin{remark}\label{graphQuotient}
The first three items imply that $X/\rho(F_n)$ is a finite metric graph. Indeed, by (1) and (2) the 
action is properly discontinuous therefore $p \co X \to X/\rho(F_n)$ is a covering map.
Since the action is free then $\pi_1(X/\rho(F_n),*) \cong F_n$ and because of (3), there are no valence 1 in $X/\rho(F_n)$. 
\end{remark}

There is an action of $\text{Aut}(F_n)$ on simplicial minimal metric $F_n$-trees 
given by 
\[(X, \rho) \cdot \phi = (X, \rho \circ \phi)\] 
Clearly, the action is 
constant on the equivalence classes. 
To see that inner automorphisms act trivially, assume $\phi = 
i_g$ and take $f = \rho(g) \co X \to X$ then $f$ is an isometry such that 
\begin{align*}
f^{-1} \circ \rho(h) \circ f(x) &= \rho(g)^{-1} \circ \rho(h)(\rho(g)(x)) \\
& = \rho(ghg^{-1})(x) \\
& = [\rho \circ i_g](h) (x)
\end{align*} 
Therefore $\rho$ is isometrically conjugate to $\rho \circ i_g$ and hence the 
action descends to an action of $\out$ on the isometry classes of trees. 
\subsection{An Equivalence of the Categories}
\textbf{From graphs to trees.} The operation of lifting gives us a way of converting a marked graph to an $F_n$-tree. Let 
$(G, \tau, \ell)$ be a metric marked graph, by choosing a point $w$ in the fiber of $\tau(*)$ we obtain 
an action of $\pi_1(G,\tau(*))$ by deck transformations on the universal cover of $G$, $\wt G$. 
By precomposing with $\tau_*$ we obtain a homomorphism $\rho^G_w:F_n \to \isom(\wt G)$. A 
different choice of a point $z \in p^{-1}(\tau(*))$, where $p\from \wt G \to G$ is the covering map, results in $\rho^G_z$ that is isometrically conjugate to $\rho^G_w$. 
A marking $\tau'$ homotopic to $\tau$ 
would produce a homomorphism $\rho'$ isometrically conjugate to $\rho$.

\textbf{From trees to graphs.} There is also an inverse operation, namely, 
taking the quotient of a simplicial metric tree $T$ by a free and simplicial $F_n$ action, see Remark \ref{graphQuotient}. The quotient is a finite metric graph with a marking $\tau$ determined by a basepoint $q \in T$. A change of basepoint would lead to a marking $\tau'$ homotopic to $\tau$. Moreover, the tree is the universal cover of the quotient. Therefore, the operations of lifting and taking quotients are inverses of each other.

Let $x=(G,\tau, \ell), y=(H,\mu,\ell')$ be two points in $\os$ a \emph{difference in markings} is a 
map $f:G \to H$ such that $ f \circ \tau $ is homotopic to $\mu$. Thus, $y$ is equal to the equivalence class of $(H, f \circ \tau, \ell')$ and this will be our preferred representative of the equivalence class. 
\begin{prop}\label{liftingAmap}
Let $f \from G \to H$ be a graph map and 
let $( \wt G,p),(\wt H,p')$ be the universal covers of $G$ and $H$. 
Given a choice of 
basepoints $w \in p^{-1}(\tau(*))$ 
and $z \in p'^{-1}(f \circ \tau(*))$ 
there is a unique lift of $f \circ p:\wt G \to H$ to $\wt{f_{wz}}: \wt G \to \wt H$. 
\begin{enumerate}
\item If $f$ is a difference in marking then for all $h \in F_n$,
\[ \wt{f_{wz}} \circ \rho^G_w(h) = \rho^H_z(h) \circ \wt{f_{wz}} \]
\item If $f$ is affine then $\wt{f_{wz}}$ is affine. 
\end{enumerate}
\end{prop}

We note that the other direction is also true. Given a linear equivariant map ${\sf f}:\wt G \to \wt H$, it descends to a map $f:G \to H$ which is a difference of the induced markings. 

Therefore the lifting operation defines an equivalence of categories between (equivalence classes of) marked metric graphs and linear differences of markings, and the category of (equivalence classes of) $F_n$-trees and linear equivariant maps.

\subsection{Lifting optimal maps}
Let $x$ be a point in Outer Space and let $\al$ be a loop in $x$ (i.e. the underlying graph of $x$), we denote by $l(\al,x)$ the length of the immersed loop homotopic to $\al$. 
For $a \in F_n$ we denote by $l(a,x)=l(\tau(a),x)$ or equivalently the length of each element in the conjugacy class of $a$ in $x$. 
Denote by $X$ be the $F_n$-tree obtained as the universal cover of $x$. Since the action is free each $a \in F_n$ acts as a hyperbolic isometry and its translation length is equal to $l(a,x)$. It has an axis denoted by $A_X(a)$. 

\begin{defn}
A loop $\al$ in $x$ is a \emph{candidate} if it is an embedded circle, an embedded figure 8, or an embedded barbell. \end{defn}

\begin{theoremDT}\cite{FM}
Let $x,y \in \os$ then the function
\begin{equation} \label{eq23}
\begin{array}{ll}
d(x,y) & = \log \inf \{ \Lip(f) \mid f \co x \to y \text{ is a Lipchitz difference in markings }\} \\ 
& = \log \sup \left. \left\{ \frac{l(\gamma,y)}{l(\gamma,x)} \right| \gamma \text{ is a loop in } x \right\}
\end{array} \end{equation}
defines an asymmetric distance on $\os$. Additionally, the supremum and infimum in equation \ref{eq23} are realized. 
\end{theoremDT}

\begin{defn}
A loop which realizes the maximum in equation \ref{eq23} is called a \emph{witness}. Sometimes we shall call a witness the element of $F_n$ or its conjugacy class that corresponds to the witness loop. A map that realizes the minimum in equation \ref{eq23} is called an \emph{optimal map}.
\end{defn}

\begin{prop}\cite{FM}\label{witnessOptimal}
For $x,y \in \os$ there is a candidate $\al$ of $x$ that witnesses the distance $d(x,y)$. 
Moreover, if $\beta$ is a witness and $f \from x \to y$ is an optimal difference in marking then $f(\al)$ is an immersed loop in $y$ and $d(x,y) = \log \frac{l(f(\al),y)}{l(\al,x)}$. 
\end{prop}

One can compute the distance 
$d(x,y)$ by going over all candidates in $x$ and finding those which stretch maximally. 

\begin{defn}
Let $x \in \os$ a basis $\mathcal{B}$ of $F_n$ is \emph{short} with respect to $x$ if for every $a \in \mathcal{B}$, $l(\tau(a),x)\leq 2$. 
\end{defn}

The following proposition follows from standard covering space theory.

\begin{prop}\label{prop_lift}
Let $x = (G, \tau, \ell)$ and $y=(H, \mu, \ell')$ be elements of $\os$, and 
$f:x \to y$ an affine optimal map. Let $\beta$ be a candidate witness for the distance $d(x,y)$, and let $b = \tau_*^{-1}(\beta) \in F_n$. Then for any choice of basepoint $w'$ in the image of the loop $\beta$ in $G$ and for any choice of lift $w \in \wt G$ of $w'$ and for any choice of lift $z \in \wt H$ of $f(w')$ on the axis $A_{\wt H}(b)$ the lift $\wt{f_{wz}}:\wt G \to \wt H$, from Proposition \ref{liftingAmap}, restricts to a linear map from 
$A_{\wt G}(b)$ to $A_{\wt H}(b)$. \end{prop}


\subsection{The boundary of Outer Space}
The advantage of the tree approach to Outer Space is that 
it extends to a compactification of Outer Space. Given an $F_n$ tree $X$ consider the function of translation lengths in $X$
\begin{align*} 
\ell_X : & F_n \to \RR \\
& \ell_X(a) = \ell(\rho(a),X) 
\end{align*}
Culler and Morgan \cite{CM} defined a set of five axioms and called a function 
$\ell: F_n \to \RR$ a \emph{pseudo length functions} if it satisfied their axioms. Any 
length function of an $F_n$ tree is a pseudo length function. They proved 
\begin{theorem}\cite{CM}\label{uniqueTree}
If a length function $\ell$ is irreducible (there are $g,h \in F_n$ with $\ell(h), \ell(g)$ and $\ell([g,h])$ non-zero) then there exists a \textbf{unique} minimal tree $X$ so that $\ell = \ell_X$ and this tree is irreducible (i.e. there is no global fixed point, no invariant end, and no invariant line). 
\end{theorem}
Let $\plf$ the space of projective length functions be the quotient of the space of length functions, considered as a subspace of $\RR^{F_n}$, quotioned by the action of $\RR$. By the previous paragraph, there exists an injection 
\begin{equation}\label{eqInjection} \os \to \plf \end{equation}

\begin{defn}\label{axesTopo}
The topology of $\os$ induced by the injection \ref{eqInjection} is called the \emph{axes topology}.
\end{defn}

Culler and Morgan \cite{CM} proved 
that $\plf$ is compact. The closure $\overline{\os}$ of $\os$ in $\plf$ is called the compactification of 
Outer Space. Cohen and Lustig \cite{CohL} and Bestvina and Feighn \cite{BF} showed that the compactification of 
Outer Space $\overline \os$ can be characterized as the set of equivalence classes of 
very small $F_n$-trees.

\begin{defn}\label{defnVerySmall}A very small $F_n$-tree is a metric $\RR$-tree $T$ and a homomorphism 
$\rho: F_n \to \text{Isom}(T)$ so that 
\begin{enumerate}
\item $\rho$ is minimal and irreducible.
\item For every set ${\sf s} \subset T$ that is isometric to an interval, $\stab{\sf s}$ is cyclic or trivial. If $\stab{{\sf s}} = \langle g \rangle$ then $g$ is not a power.
\item For every tripod $\sf t$ in $T$, $\stab{\sf t} = \{1\}$.
\end{enumerate}
Two very small $F_n$ trees are equivalent if there is an equivariant isometry between them.
\end{defn}

Free simplicial $F_n$-trees are very small trees and those are precisely the points of 
Outer Space. 

There is another topology on $\overline{\os}$ called the \emph{Gromov topology}.

\begin{defn}
A basis element $O(T,P,K,\eps)$ is parameterized by an $\RR$-tree 
$T$, a compact subset $K$ in $T$, a finite subset $P< F_n$ and $\eps>0$. 
A \emph{$P$-equivariant $\eps$-relation} $R$ between $K \subset T$ and $K' \subset T'$ is a subset 
$R \subset K \times K'$ so that the following hold
\begin{enumerate}
\item the projection of $R$ is surjective on each factor, 
\item if $(x,x') , (y,y') \in R$ then $|d(x,y) - d(x',y')|< \eps$,
\item for all $a \in P$ if $x$ and $\rho(a) x \in K$ 
and $(x,x') \in R$ then $\rho'(a)x' \in K'$ and $(\rho(a)x,\rho'(a)x') \in R$.
\end{enumerate}
A basis element $O(T,K,P,\eps)$ for the Gromov topology is the 
set of trees $S$ so that there is a compact $K' \subset S$ and a $P$-equivariant 
$\eps$-relation $R \subset K \times K'$. 
\end{defn}
Paulin \cite{P} showed that the axes topology and the Gromov topology coincide on $\os$. They are both equal to the simplicial topology defined by using the $l_1$ metric on the following simplicies.

\begin{defn}\label{simplTopo}
Let $G$ be a graph with no vertices of valence $<3$ and $\tau:R_n \to G$ a marking. The simplex $S_{G,\tau} \subset \os$ is the set of points $(G, \tau,\ell)$ with $\ell$ ranging over all possible functions 
$\ell:E(G) \to (0,1)$ with $\sum_{e \in G} \ell(e) = 1$. 
The $l_1$ metric between $(G,\tau,\ell), (G, \tau, \ell')$ in this simplex is $\sum_{e \in G} |\ell(e) - \ell'(e)|$. A face of $S_{G,\tau}$ corresponds to a forest collapse. 
This defines a simplicial structure on $\os$ that is locally finite. The topology induced on $\os$ is called 
\emph{the simplicial topology}. \end{defn}

\section{A characterization of the completion points}

\subsection{Outer Space admits a forwards completion.}

We will first prove that Outer Space is backwards complete but not forwards complete. 
Next we will show that it satisfies the conditions of Theorem \ref{ThmCompl} and thus it admits a forwards completion.
Finally we will characterize the points of the compactification of Outer Space that are completion points.

\begin{defn}\label{defBInOut}
Let $x \in \os$, $0 \leq r \in \RR$ the incoming ball centered at $x$ with radius $r$ is
\[ B_\text{in}(x,r) = \{y \in \os \mid d(y,x) < r\}. \] 
The closed ball is $\clBin(x,r) = \{ y \in \os \mid d(y,x)\leq r\}$. Similarly, the outgoing ball centered at $x$ with radius $r$ is
\[ B_\text{out}(x,r) = \{y \in \os \mid d(x,y) < r\} \]
and $\overline{B}_\text{out}(x,r) = \{ y \in \os \mid d(x,y)\leq r\}$ is the closed ball.
\end{defn}

\begin{prop}\label{compactBalls}
$\clBin$ is compact (while $\overline{B}_\text{out}(x,r)$ is not).
\end{prop}
\begin{proof}
Consider $y \in \Bin$ since $d(y,x)< r$ then for all $a \in F_n$, $l(a,y) > \frac{1}{r} l(a,x)$. Let $s$ be the length of the shortest loop in $x$, then $l(a,y) > \frac{s}{r}$. This implies that $y$ lies in the $\frac{s}{r}$ thick part of $\os$ and by \cite{AKB}, there exists a constant $A = A(s,r)$ so that $d(x,y) \leq A d(y,x) <Ar$. Thus, for all $a \in F_n$ we also have $l(a,y) < Ar l(a,x)$. In conclusion,
\[ \frac{1}{r} l(a,x) < l(a,y) < Ar l(a,x) \]
Now suppose $z$ lies in the closure of $\Bin$ in the space of projective length functions. Let $\{x_k \}$ be a sequence in $\Bin$ projectively converging to $z$, thus 
\[ \frac{l(a,z)}{l(b,z)} = \lim_{k \to \infty} \frac{l(a,x_k)}{l(b,x_k)} \text{ for all } a,b \in F_n \]
Fix $b \in F_n$ then $l(b,x)>0$. Thus, for all $a \in F_n$, 
\[ \frac{1}{Ar^2} \frac{l(b,z)}{l(b,x)} l(a,x) \leq l(a,z) \leq Ar^2 \frac{l(b,z)}{l(b,x)} l(a,x). \]
In particular $z$ corresponds to a free and simplicial $F_n$ tree and so $z$ lies in Outer Space. Thus, the closure of $\Bin$ in the space of projective length functions is equal to its closure in Outer Space. Since the former is compact we obtain our statement. 

Finally, to see that $\clBout$ is not compact, let $\al$ be an embedded loop in $x$ and let $x_k$ be a graph so that the edges of $\al$ are rescaled by $\frac{1}{k}$ and the other edges are rescaled so the total volume is 1. It is elementary to see that $d(x,x_k)$ is at most $r = \frac{1}{s}$ where $s$ is the length of the shortest edge in $x$. Hence $x_k \in B_\text{out}(x, r)$ but $x_k$ converges projectively to a non free $F_n$ tree hence the limit lies outside Outer Space. 
\end{proof}

\begin{lemma}\label{addmissibleConv}
If $\{x_k\} \subset \os$ is a forwards admissible sequence then for each $a \in F_n$, $l(a,x_k)$ is almost monotonically decreasing. 
If $\{x_k\} \subset \os$ is a backwards admissible sequence then for each $a \in F_n$, $l(a,x_k)$ is almost monotonically increasing. 
\end{lemma}
\begin{proof}
We know that for all $\eps$ there exists an $N(\eps) \in \NN$ so that for all $m>N(\eps)$ there is a $K(m,\eps) \in \NN$ such that $d(x_m,x_k) < \log(1+\eps)$. Therefore, for all $a$, $\frac{l(a,x_k)}{l(a,x_m)}< e^{\log(1+\eps)} = 1+ \eps$. Hence $l(a,x_k) < l(a,x_m) + \eps l(a,x_m)$. Now let $\eps=1$ then for $S = N(1)+1$ denote $M = l(a,x_S)$ then for all $m>K(S,1)$ we have $l(a,x_m) < 2M$. Now let $\eps>0$ be arbitrary and take $m> \max\{N(\frac{\eps}{M}), K(S,1)\}$ then there exists a $K(m,\frac{\eps}{M})$ such that for all $k>K$ we have
\[ l(a,x_k)< l(a,x_m) + \frac{\eps}{M} l(a,x_m) < l(a,x_m) + \eps. \]
If $\{x_k\}$ is backwards admissible then for all $\eps$ there exists an $N(\eps) \in \NN$ such that for all $m>N(\eps)$ there is a $K(m,\eps) \in \NN$ so that for all $k>K(m,\eps)$ we have $\frac{l(x_m,a)}{l(x_k,a)}<1+\eps$. We proceed as in the case of a forwards admissible sequence.
\end{proof}

\begin{prop}\label{BackAddConv}
If $\{x_k\}$ is a backwards admissible sequence then it has a unique forwards limit $x$. The point $x$ is also the limit of $\{x_k\}$ in the axes topology.
\end{prop}
\begin{proof}
If $\{ x_k\}$ is backwards admissible then $\{x_k\} \subseteq \clBin(x_0, r)$ for a large enough $r$. Thus, $\{x_k\}$ has a partial limit $x \in \os$. Fix a conjugacy class $\al$ and consider the sequence $l(\al,x_k)$, since $x_k$ is backwards admissible, $l(\al,x_k)$ is almost monotonically increasing. Moreover, if $\{x_{k_i}\}$ is the subsequence that limits to $x$ then $l(\al, x_{k_i})$ limits to $l(\al,x)$ and therefore, $l(\al,x_k)$ is bounded. Therefore $l(\al,x_k)$ converges by Proposition \ref{Prop:BoundedAndAlmostMonDec} and to $l(\al,x)$. Thus, the partial limit $x \in \os$ is in fact the limit of $\{x_k\}$ in the axes topology. If $y \in \os$ so that $\lim_{m \to \infty} d(x_k,y) = 0$ then $d(x,y) = 0$ (since the length functions converge) and thus, $y=x$. 
\end{proof}

\begin{prop}
$\os$ is backwards complete but not forwards complete.
\end{prop}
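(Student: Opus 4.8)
I would prove the two halves of the statement by separate arguments, as they rely on different features of the metric.

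\textbf{$\os$ is not forwards complete.} It suffices to exhibit one forwards admissible sequence with no forward limit in $\os$. Fix $n\ge 2$, let $R$ be the rose on petals $a_1,\dots,a_n$ with its tautological marking, and for $k\ge 2$ set $x_k=(R,\mathrm{id},\ell_k)\in\os$ with $\ell_k(a_1)=k^{-2}$ and $\ell_k(a_i)=(1-k^{-2})/(n-1)$ for $i\ge 2$. For $m>k$ the identity map of $R$ is a difference in markings from $x_k$ to $x_m$, of Lipschitz constant $\max_i \ell_m(a_i)/\ell_k(a_i)$; since $\ell_m(a_1)/\ell_k(a_1)<1$ and $\ell_m(a_i)/\ell_k(a_i)\le (1-k^{-2})^{-1}$ for $i\ge 2$, the Theorem/Definition of \cite{FM} gives $d(x_k,x_m)\le\log\frac{1}{1-k^{-2}}$, which tends to $0$ as $k\to\infty$ uniformly in $m>k$. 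So $\{x_k\}$ is forwards Cauchy, hence forwards admissible. If some $y\in\os$ satisfied $d(x_k,y)\to 0$, then from $l(a_1,y)\le e^{d(x_k,y)}\,l(a_1,x_k)=e^{d(x_k,y)}k^{-2}\to 0$ we would get $l(a_1,y)=0$, contradicting freeness of the action defining $y$; thus $\{x_k\}$ has no closest forward limit. (In $\widehat{\os}$ this sequence converges to the Bass--Serre tree of $F_n=\langle a_1\rangle * \langle a_2,\dots,a_n\rangle$.)

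\textbf{$\os$ is backwards complete.} Let $\{x_n\}$ be backwards admissible. Applying Definition~\ref{add_seq_defn} with $\eps=1$, fix $n_0>N(1)$; then $d(x_k,x_{n_0})<1$ for all $k>K(n_0,1)$, so the length--function formula for $d$ yields $l(\gamma,x_k)\ge e^{-1}l(\gamma,x_{n_0})$ for every $\gamma\in F_n$, whence $\sys(x_k)\ge e^{-1}\sys(x_{n_0})=:\delta>0$. Hence a tail of $\{x_n\}$ lies in the thick part $\os_{\ge\delta}=\{X\in\os:\sys(X)\ge\delta\}$. Since $\overline{\os}$ is compact in the axes topology and the functions $l(\gamma,\cdot)$ are continuous, any limit point $T\in\overline{\os}$ of this tail satisfies $l(\gamma,T)\ge e^{-1}l(\gamma,x_{n_0})$ for all $\gamma$, hence $\sys(T)\ge\delta>0$. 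The crucial point is that such a $T$ lies in $\os$: with $\sys(T)\ge\delta>0$ no nontrivial element of $F_n$ is elliptic in $T$ and no point of $T$ has a non-discrete orbit (if $gp$ accumulated at $p$, then $l(g,T)\le d(p,gp)$ would be smaller than $\delta$), so the minimal $F_n$--action on $T$ is free and simplicial; moreover it is unit volume, since along the tail only edges can collapse in the limit, no loop degenerates, and collapsing edges affects neither the length function nor the total edge length. Therefore a subsequence $x_{n_j}$ converges in the axes topology to some $x_\infty\in\os$; as the Lipschitz metric induces the standard topology on $\os$ (\cite{P}, \cite{FM}), this is convergence in $d$, in particular $d(x_\infty,x_{n_j})\to 0$. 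To pass to the full sequence: given $\eps>0$ choose $n>N(\eps/2)$, then $j$ with $n_j>K(n,\eps/2)$ and $d(x_\infty,x_{n_j})<\eps/2$; backwards admissibility gives $d(x_{n_j},x_n)<\eps/2$, so $d(x_\infty,x_n)<\eps$ by the directed triangle inequality, proving $d(x_\infty,x_n)\to 0$. Finally, if $z\in\os$ also satisfies $d(z,x_n)\to 0$, then $d(z,x_\infty)\le d(z,x_{n_j})+d(x_{n_j},x_\infty)\to 0$, and since $x_{n_j}\to x_\infty$ in the Hausdorff standard topology on $\os$ while $d(z,x_{n_j})\to 0$ forces $x_{n_j}\to z$ as well, we get $z=x_\infty$. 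Thus $x_\infty$ is the closest backward limit of $\{x_n\}$.

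The main obstacle is the middle step of the second part: showing that a backwards admissible sequence cannot escape to a non-simplicial tree or to a tree with nontrivial vertex stabilizers, i.e.\ that $\os_{\ge\delta}$ is closed in $\overline{\os}$ with all limit points in $\os$. This rests on the structural fact that a minimal $\RR$--tree action of the finitely generated group $F_n$ admitting a positive lower bound on all translation lengths is free and simplicial (no nontrivial elliptics, and discreteness of orbits together with minimality and finite generation force cocompactness, hence simpliciality), together with the identification of the Lipschitz topology on $\os$ with the axes/Gromov topology. The first half of the proof, by contrast, is elementary.
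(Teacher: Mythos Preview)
Your ``not forwards complete'' half is correct and is exactly the paper's argument: pinch an embedded loop and observe that any putative forward limit $y\in\os$ would have $l(a_1,y)=0$.

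For ``backwards complete'' your overall strategy coincides with the paper's---trap the sequence in a compact piece of $\os$ and extract a limit---but there is a real gap in the compactness step. You assert that $\overline{\os}$ is compact in the axes topology and that $l(\gamma,\cdot)$ is continuous, and then pass the inequality $l(\gamma,x_k)\ge e^{-1}l(\gamma,x_{n_0})$ to a limit point $T$. The difficulty is that $\overline{\os}$ is compact only as a subspace of the \emph{projectivized} length functions $P\RR^{F_n}$; there the individual values $l(\gamma,\cdot)$ are not defined, only their ratios. To get an honest limit of the length functions $l(\,\cdot\,,x_k)\in\RR^{F_n}$ you need, for each $\gamma$, an \emph{upper} bound on $l(\gamma,x_k)$, and neither the systole bound nor the sentence ``only edges can collapse \dots\ collapsing edges affects neither the length function nor the total edge length'' supplies one: a priori the sequence may wander through infinitely many simplices with the marking twisting so that some $l(\gamma,x_k)\to\infty$. (For instance, on the rose the markings $a\mapsto a,\ b\mapsto ba^k$ keep unit volume and systole $1/2$ while $l(b,\cdot)\to\infty$; this particular sequence is not backwards admissible, but you have not used backwards admissibility to exclude such behaviour.) The paper avoids the issue by invoking directly the well-known compactness of the incoming ball $B_{in}(x,r)=\{y:d(y,x)\le r\}$. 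If you want to argue along your lines, one clean fix is: a short basis of $x_k$ has $l(\,\cdot\,,x_k)\le 1$, hence $l(\,\cdot\,,x_{n_0})\le e$, so it lies in a fixed finite subset of $F_n$; on a subsequence the short basis is constant, which bounds every $l(\gamma,x_k)$ by the word length of $\gamma$ in that basis. Once the limit is known to lie in $\os$, the rest of your argument (upgrade from subsequence to full sequence via the directed triangle inequality, uniqueness from $d(z,x_\infty)=0\Rightarrow z=x_\infty$ in $\os$) is fine.
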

\begin{proof}
Outer Space is backwards complete by Proposition \ref{BackAddConv}. 
To see that it is not forwards complete consider the sequence from the proof of Proposition \ref{compactBalls} showing that outgoing balls are not compact and notice that it is a forwards Cauchy sequence. 
\end{proof}

To prove that $\os$ has a forwards completion guaranteed by Theorem \ref{ThmCompl} we must show that it is forwards continuous (Definition \ref{contDefn}), minimizing (Definition \ref{miniDefn}) and that $\hat X$ is semi forwards continuous with respect to $X$ (Definition \ref{semiForwardsContinuous}). By Lemma \ref{keyLemma} it suffices to show that $X$ is forwards continuous and that it satisfies condition \ref{star} of Lemma \ref{keyLemma} (we recall this condition at the end of the proof of Proposition \ref{conditions}). 

\begin{prop}\label{conditions}
Outer Space is forwards and backwards continuous. Moreover, $X$ satisfies condition \ref{star} of Lemma \ref{keyLemma} and therefore it is minimizing and $\hat X$ is semi forwards continuous with respect to $X$.
\end{prop}
\begin{proof}
Suppose that $\{x_k\}$ is a sequence in Outer Space with a forward limit $x \in \os$. Then there exists $K\in \NN$ such that for all $k>K$ we have $d(x_k,x)<1$. Thus for each conjugacy class $\al$ in $F_n$ we have $l(a,x_k)> e^{-1} l(a,x)$. 
Since $x \in \os$ its length function is positive and bounded away from zero, so there exists $\eps_0$ so that $x_k \in \os^{\geq \eps_0}$ for all $k>K$. Therefore there exists $A>0$ such that $d(x,x_k) \leq A d(x_k,x)$ for all $k>K$. Thus, $x$ is also a backwards limit of $\{x_k\}$. 
Therefore, $x$ is the limit of $\{x_k\}$ (by Proposition \ref{BackAddConv}). 

Now for every $y \in \os$, we have $d(x_m,y) - d(x,x_m) \leq d(x,y) \leq d(x,x_m) + d(x_m,y)$ and hence $\displaystyle \lim_{m \to \infty}d(x_m,y) \leq d(x,y) \leq \lim_{m \to \infty} d(x_m,y)$. Similarly $d(y,x) = \lim_{m \to \infty} d(y,x_m)$. Hence $\os$ is forwards continuous. The proof that it is backwards continuous is analogous and left to the reader. 

Condition \ref{star} states that every forwards admissible sequence $\{x_m\}$ in $\os$ that has a CFL in $\os$ denoted $x$ is also backwards admissible and $x$ is its CBL. This is precisely what we have already shown. 
\end{proof}

\begin{cor}
Outer Space admits a forwards completion.
\end{cor}

In the next sections we will work to characterize this completion as a subset of $\overline{\os}$.

\subsection{The limit of a forward admissible sequence}
Recall that $\ol{\os}$ is compact. In general, if $\{X_m\}$ is a sequence in $\ol{\os}$ then there exists a subsequence $\{X_{m_j}\}$ and a sequence of scalars $\{c_j\}$ so that $\lim_{j \to \infty} c_j X_{m_j}$ converges to $X \in \ol{\os}$ (in the space of length functions). 
We observe that a forward Cauchy sequence in $\os$ converges, and without the need to rescale the trees, i.e. we can take $c_j =1$ for each $j$.

\begin{cor}\label{limitExists}
Let $\{X_k\} \subset \os$ be a forwards Cauchy sequence then there exists an $F_n$-tree $X$ whose homothety class lies in Outer Space, so that $\lim_{k \to \infty} X_k = X$ as length functions.
\end{cor}
\begin{proof}
Let $\| \cdot \|_k$ be the translation length function of $X_k$. 
For each conjugacy class $\al$, the sequence $\| \al \|_{k}$ is positive and almost monotonically decreasing (Lemma \ref{addmissibleConv}). Therefore $\| \cdot \|_{k}$ converges to a length function, that is a length function of a very small tree action \cite{CohL}.
\end{proof}

We denote by $x_m$ the $F_n$-quotient of $X_m$ and let $h_{m,k}\from x_m \to x_k$ be an optimal difference in marking. Then $\log \Lip h_{m,k} = d(x_m,x_k)$. Fix $m$ and let $k$ vary, then $\Lip h_{m,k}$ is an almost monotonically decreasing Cauchy sequence. 

\begin{cor}\label{limitLipConst}
If $\{x_m\}$ is an admissible sequence and $h_{m,k} \from x_m \to x_k$ optimal differences in marking then $\{ \Lip h_{m,k} \}_{k = m}^\infty$ converges to some limit $L_m$ and it is bounded by $M_m$.
\end{cor}

Our next goal is to show that for each $m$ there exists a map $\f{m,\infty}: X_m \to X$ such that $\Lip \f{m,\infty} \leq L_m$. 
The limit $X$ is given in terms of length functions so we need to work in order to give another description of it that would easily admit a map. 

For the remainder of this section, we fix $1 \leq m \in \NN$, following closely Bestvina's construction\footnote{In fact the rest of this section is a repetition of the arguments of \cite{degen} in our setting (that is a little different than the original).} in \cite{degen}, and using Gromov's Theorem \ref{GromovThm}, we construct a metric tree $X_\infty(m)$, with an $F_n$-action, and an equivariant map $f_{m, \infty} \from X_m \to X_\infty(m)$.
All of these constructions depend on $m$. However, we will show that the tree $X_\infty(m)$ is non-trivial, irreducible and that the length functions of $X_k$ converge to that of $X_\infty(m)$. Therefore, by Theorem \ref{uniqueTree}, $X_\infty(m)$ is equivariantly isometric to the limit $X$ from Corollary \ref{limitExists}.

We construct $X_\infty(m)$ as a union of finite trees $X^l_\infty(m)$ as follows. For every $k>m$ there is a candidate $\beta_k$ in $x_m$ that is stretched maximally by the difference in marking $h_{m,k} \from x_m \to x_k$. 
By passing to a subsequence we may assume that it is 
the same for all $k>m$ and denote it by $\beta$. 
Note that $\beta$ depends on $m$ but since we have now fixed $m$, we shall drop the $m$-index to simplify the notation. 
We denote by $\tau_k \from R \to x_k$ the marking of $x_k$. Let $\B$ (also depends on $m$) be a basis such that for every $c \in \B$, $\tau_m(c)$ is homotopic to a candidate of $x_m$ and there is a $b$ in $\B$ such that $\tau_m(b) = \beta$. 
Recall that $h_{m,k}(\beta)$ is immersed in $x_k$. Let $A_m(b)$ be the axis of $b$ in $X_m$. Let $w \in X_m$ be a point on $A_m(b)$ ($w$ depends on $m$), let $w'$ be the projection in $x_m$ of $w$, and choose 
$w_k \in X_k$ be a point on $A_k(b)$ in the preimage of $h_{m,k}(w')$ (here $w_k$ depends on $m$ as well, but again we repress this dependence).
By Proposition \ref{prop_lift} there exists a map $f_{m,k} \from X_m \to X_k$ that lifts $h_{m,k}$ and so that $f_{m,k}(w) = w_k$ and $f_{m,k}|_{A_m(b)}$ is a linear map onto $A_k(b)$.



\begin{prop}
For each basis element $c \in \B$, the $f_{m,k}$ image of the axis $A_m(c)$ is contained in the $\Lip(f_{m,k})$ neighborhood of 
the axis $A_k(c)$.
\end{prop} 
\begin{proof}
The cycle $\gamma = \tau_m(c)$ is a candidate in $x_m$. Therefore, it is short, i.e. $l(c,x_m) \leq 2$ and hence $l(c,x_k) \leq 2 \Lip(f_{m,k})$. Part of $f_{m,k}(\gamma)$ survives after tightening the loop. This is the part that lifts to a fundamental domain of $A_k(c)$. Therefore, $f_{m,k}(A_m(c))$ is contained in the $2 \Lip(f_{m,k})$ neighborhood of $A_k(c)$.
\end{proof}

We exhaust $F_n$ via $\BB$-length: 
\[ W^l = \{ g \in F_n \mid |g|_{\BB^m}\leq l\}\] 
Denote by $X_k^l$ the convex 
hull of $\{ W^l \cdot z_k \}$ . A \emph{diagonal} in $X_k^l$ is a path of the 
form $[\rho(g) w_k, \rho(h) w_k]$. Each diagonal in $X_k^l$ can be covered 
by $\frac{2M_ml}{\eps}$ balls of radius $\eps$ because:
\[d_{X_k}(w_k, \rho_k(g) w_k) \leq \Lip(\fmk) d(w_m ,\rho_m(g) w_m) \leq \Lip(\fmk) l \leq M_m l \] 
Note that this number is uniform over all $k$. We now apply 
\begin{theorem}\cite{Gr}\label{GromovThm}
If $\{Y_k\}_{k=k_0}^\infty$ is a sequence of compact metric spaces so that for every $\eps$ there is an $N(\eps)$ so that $Y_k$ may be covered by $N(\eps)$ $\eps$-balls then there is a subsequence $Y_{k_j}$ which converges in the Gromov sense to a compact metric space.
\end{theorem}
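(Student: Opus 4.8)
This is Gromov's precompactness criterion, so the plan is to run the classical diagonal argument, phrased to fit the present setting. The one point that needs attention is that a covering of $A_k$ by $N(\eps)$ balls of radius $\eps$ does not by itself bound $\operatorname{diam}(A_k)$ (consider $N$ points pairwise at distance $D$); one must use that the spaces $A_k = X_k^l$ occurring here are \emph{connected} --- being convex hulls in trees --- which upgrades the covering bound to a uniform estimate $\operatorname{diam}(A_k) \le 2\eps\, N(\eps)$, and in fact to the explicit bound $\operatorname{diam}(A_k)\le 2M_m l$ already recorded above. Fix such a uniform bound $D$.

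First I would extract uniformly finite, nested nets: for each $j$ choose inside every $A_k$ a $2^{-j}$-net of at most $N(2^{-j})$ points, replace it by the union of the first $j$ of these nets so that they become nested, pad by repetition so that the $j$-th net has a fixed cardinality $M_j$ independent of $k$, and enumerate the countable union $\bigcup_j S_{k,j} = \{a_1^k, a_2^k, \dots\}$ compatibly, so that $\{a_1^k,\dots,a_{M_j}^k\}$ is a $2^{-j}$-net of $A_k$. Since all distances $d_k(a_i^k, a_{i'}^k)$ lie in $[0,D]$, a pairwise application of Bolzano--Weierstrass together with a diagonal argument over $(i,i')\in\NN^2$ produces a subsequence $(k_n)$ along which $d_{k_n}(a_i^{k_n}, a_{i'}^{k_n})$ converges for all $i,i'$ to a limit $\delta(i,i')$. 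The function $\delta$ is a pseudometric on $\NN$ (symmetry and the triangle inequality pass to the limit); let $A$ be the completion of $\NN/{\sim}$ with the induced metric, and let $\bar a_i$ denote the image of $i$.

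It then remains to verify that $A$ is the desired limit. It is complete by construction, and for each $j$ the points $\bar a_1,\dots,\bar a_{M_j}$ form a $2\cdot 2^{-j}$-net (the net property passes to the limit), so $A$ is compact. For Gromov convergence, given $\eps>0$ pick $j$ with $2^{-j}<\eps/4$ and then $n$ large enough that $|d_{k_n}(a_i^{k_n},a_{i'}^{k_n}) - \delta(i,i')|<\eps/4$ for all $i,i'\le M_j$; the relation $R_n = \{(a_i^{k_n},\bar a_i) : i\le M_j\}$ then has $\eps$-dense projections onto $A_{k_n}$ and $A$ and distorts distances by less than $\eps/4$, hence extends to a correspondence of distortion $<\eps$, so $d_{GH}(A_{k_n},A)<\eps$. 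The only genuinely delicate ingredient is the uniform diameter bound needed to make the diagonal extraction possible; everything else is routine bookkeeping with nets and correspondences.
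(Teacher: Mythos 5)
The paper does not prove this statement at all --- it is quoted from Gromov \cite{Gr} and used as a black box --- so there is no internal proof to compare yours against. Your argument is the standard proof of Gromov's precompactness criterion (uniformly finite nested $\varepsilon$-nets, a diagonal extraction making all pairwise distances of net points converge, the limit pseudometric on the index set, completion of the quotient, and approximate correspondences to verify Gromov--Hausdorff convergence), and it is essentially correct; the remaining steps you wave at are genuinely routine, e.g.\ ``the net property passes to the limit'' just needs a pigeonhole over the finitely many indices $i'\le M_j$, and the extension of the relation $R_n$ to a full correspondence only costs the $\varepsilon$-denseness of its projections in the distortion estimate. Your preliminary observation is also a real point and is handled correctly: as literally stated the theorem needs a uniform diameter bound (a sequence of two-point spaces at mutual distance $k$ satisfies the covering hypothesis with $N(\varepsilon)=2$ but has no convergent subsequence), and in the paper's application this is supplied either by the connectedness of the spaces $X_k^l$ (convex hulls in trees), which upgrades the covering bound to $\mathrm{diam}\le 2\varepsilon N(\varepsilon)$, or directly by the explicit bound $M_m l$ on diagonals recorded just before the theorem is invoked. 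So your write-up in fact does slightly more than the paper asks of the citation: it both proves the quoted statement in the form actually needed and flags the implicit hypothesis under which it is applied.
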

Therefore, for each $l \in \NN$ we denote the limit space provided by the theorem $X^l_\infty(m)$ and the sequence $\{k_j^l\}_{j=1}^\infty$ so that $\lim_{j \to \infty} X^l_{k^l_j} = X^l_\infty(m)$. By a diagonal argument we may pass to a subsequence $k_j$ so that $X^l_{k_j}(m)$ 
converges to $X^l_\infty(m)$ for every $l$. 

\begin{defn}\label{actionDefn}
We require of the sequence $\{k_j\}$ that for each $g \in F_n$ the sequence $\rho_k(g)(w_{k_j}) \in X_{k_j}$ converges in $X^l_\infty(m)$ and denote its limit by $\rho(g) \cdot w$. The existence of the $k_j$ sequence follows by a diagonal argument from the fact that $F_n$ is countable. 
\end{defn}
\begin{prop}
$X^l_\infty(m)$ is a finite tree.
\end{prop}
\begin{proof}
The proof is identical to that of Lemma 3.5 in \cite{degen}. One shows that the limit of 0-hyperbolic spaces is a 0-hyperbolic space. It is elementary to show that 
\[ X^l_\infty(m) = \bigcup_{g,g' \in W^l} [g \cdot w, g' \cdot w]. \]
This implies that $X^l_\infty(m)$ is connected and that it is a finite tree. 
\end{proof}

The definitions of $X_\infty^l(m)$ and $k_j$ imply that $X_\infty^l(m) \subset X_\infty^{l+1}(m)$. Moreover, the inclusion sends the basepoint $w$ in $X_\infty^l(m)$ to the basepoint in $X_\infty^{l+1}(m)$ and the point $\rho(g) \cdot w$ of $X^l_\infty(m)$ to $\rho(g) \cdot w$ of $X^{l+1}_\infty(m)$. 
\begin{defn}
Let 
$\Xm = \cup_{l=1}^{\infty} X_\infty^l(m)$. 
Then $\Xm$ is a tree and there is a well defined basepoint $w \in \Xm$ that is the image of each basepoint $w \in X_\infty^l(m)$, and the points $\rho(g) \cdot w$ for $g \in F_n$ are also well-defined. 
\end{defn}

In Propositions \ref{Fntree}, \ref{Limitminimal} and \ref{XmIsX} to simplify notation we write $X_k$ instead of $X_{k_j}$ but we always mean that we restrict our attention to the subsequence. 

\begin{prop}\label{Fntree}
There exists a homomorphism $\rho: F_n \to \isom(\Xm)$ so that for every $q \in \Xm$ and for every sequence $\{ q_k \} \subset X_k$ so that $\lim_{k \to \infty} q_k = q$ the following equation holds 
\[ \rho(g) q = \lim_{k \to \infty} \rho_k(g) q_k \]
\end{prop}
\begin{proof}
We follow the proof in \cite{degen} Proposition 4.1 and Theorem 4.2. 
We have already defined the action of $F_n$ on $w$ in Definition \ref{actionDefn}. We check that it is an isometry on orbits of $w$, 
\begin{eqnarray*}
d \mathopen{} \left( \rho(g) \Bigl( \rho(h) w \Bigr), \rho(g) \Bigl( \rho(h') w \Bigr) \right) &= &\displaystyle \lim_{k \to \infty} d(\rho_k(g) \rho_k(h) w_k , \rho_k(g) \rho_k(h') w_k)\\
& = & \displaystyle \lim_{k \to \infty} d(\rho_k(h) w_k , \rho_k(h') w_k) \\
& = & \displaystyle \lim_{k \to \infty} d(\rho(h) w_k, \rho(h') w_k) 
\end{eqnarray*}

For a point $q \in \Xm$ there is some $l$ such that $q \in X^l_\infty(m)$. There is a sequence 
$q_k \in X^l_k$ so that $\lim_{k \to \infty}q_k = q$. Let $g \in W^s$, then $q_k \in [\rho_k(g') w_k, \rho_k(g'') w_k]$ for $g',g'' \in W^l$ implies that 
$\rho_k(g)(q_k) \in [\rho_k(gg') w_k, \rho_k(gg'') w_k]$ so $\rho_k(g)(q_k) \in X_k^{s+l}$. We define 
\[ \rho(g) q = \lim_{k \to \infty} \rho_k(g)(q_k)\] in $X_\infty^{l+s}(m)$. We note that $\rho(g)$ behaves well with respect to distances to orbit points of $w$, 
\begin{eqnarray*}
\displaystyle d(\rho(g)q, \rho(h) w) &= &\displaystyle \lim_{k \to \infty} d(\rho_k(g) q_k, \rho_k(h) w_k) \\
&\displaystyle = &\lim_{k \to \infty} d( q_k, \rho_k(g^{-1}h) w_k) \\
& \displaystyle= & d(q,\rho(g^{-1}h) w) \\
& = & \lim_{k \to \infty}d(q_k, \rho_k(g^{-1}h) w_k) 
\end{eqnarray*}
The point $\rho(g)q$ is determined by the distances above
and it is independent of the sequence $\{q_k\}$.
We now have to prove that $\rho(g)$ is an isometry. Let $q \in \Xm$ and 
suppose $q'$ is in the orbit of $w$, i.e. $q' = \rho(h) w$ for some $h \in F_n$. Then $\rho(g)$ preserves the distance, indeed, $d(\rho(g) q, \rho(g) q') = d(\rho(g) q, \rho(g) \rho(h)w) = \lim_{k \to \infty} d(q_k, \rho_k(g^{-1} gh) w_k) = d(q, \rho(h) w)$. 
More generally for $q,q' \in \Xm$ let $w',w''$ be points in the orbit of $w$ such that $q,q'$ lie on the diagonal $[w',w'']$. $\rho(g)$ preserves the distance between $w,w'$ as well as the distances between $q$ and $w,w'$ and $q'$ and $w',w''$. Since $\Xm$ is a tree, it must preserve the distance $d(q,q')$. 
\end{proof}
\begin{prop}\label{Limitminimal}
$\Xm$ is non-trivial and minimal.
\end{prop}
\begin{proof}
Since $w_m \in A_{m}(b)$ and $f_{m,k}(w_m) = w_k$ and $f_{m,k}$ restricts to an affine map on $A_{m}(b)$, then $w_k \in A_{k}(b)$. Therefore, for all $k$, 
\[ d(w_k, \rho_k(b^2)(w_k) = 2 d(w_k, \rho_k(b)w_k)\] 
The derivative of $\fmk$ on $A_m(b)$ is $\Lip(\fmk) \geq 1$, hence $d(w_k, \rho_k(b) w_k)>d(w_m, \rho_k(b) w_m)$ therefore
\begin{equation}\label{AxisOfb}
d(w, \rho(b^2) w) = 2 d(w, \rho(b)w) \geq 2d(w_m,\rho_m(b)w_m)>0 \end{equation}
Hence $\rho(b)$ is a hyperbolic isometry and $w$ lies on the axis of $\rho(b)$ in $\Xm$.
The tree is minimal: If $H$ is an invariant subtree then it must contain the axis of $\rho(b)$ in $\Xm$ and its orbit under $F_n$. Since $H$ is connected it must also contain the convex hull of this set. By construction $\Xm$ is precisely the convex hull of an orbit of $w$ hence $H=\Xm$. 
\end{proof}

\begin{prop}\label{XmIsX} 
For every $g \in F_n$: 
$\|g\|_{\Xm} = \displaystyle \lim_{k \to \infty} \|g\|_k$
\end{prop}

\begin{proof} By definition
\[ \displaystyle \| g \|_{\Xm} = \lim_{s \to \infty} \frac{d(w, \rho(g^s) w) }{s}= \lim_{s \to \infty} \lim_{k \to \infty} \frac{d(w_k, \rho(g^s) w_k)}{s}\] 
Observe that 
$d(w_k, \rho(g^s) w_k)= 2 d(w_k, A_k(g)) + s \|g\|_k$ and 
\[d(w_k, A_k(g)) \leq \Lip(f_{m,k}) d(w_m, A_m(g)) \leq M_m d(w_m, A_m(g)) = D(g,m)\] 
Therefore, $s \|g\|_k \leq d(w_k, \rho(g^s) w_k) \leq 2D(g,m) + s \|g\|_k$. Hence,
\[
\displaystyle \lim_{k \to \infty} \| g\|_k \leq 
\| g\|_{\Xm} \leq \lim_{k \to \infty} \lim_{s \to \infty} \frac{1}{s} (s \|g\|_k + 2D(g,m)) = \lim_{k \to \infty} \| g \|_k \qedhere \] \end{proof}

We now consider the entire sequence $\{X_k\}$ and not the subsequence $\{X_{k_j}\}$. 

\begin{theorem}\label{mapExists}
Let $\{ X_m\}$ be a forward Cauchy sequence in Outer Space, then there is a point $X \in \overline{\os}$ so that $X$ is the limit of $\{X_m\}$ in the axes topology and 
there is an equivariant Lipschitz map $f_{m,\infty}: X_m \to X$ with $\Lip(f_{m, \infty}) = \lim_{k \to \infty} \Lip(\fmk)$.
\end{theorem}
\begin{proof}
The first part of the statement is Corollary \ref{limitExists}. By Proposition \ref{XmIsX}, the 
length functions of $X$ and $\Xm$ coincide. This length function is non-abelian and both $X$ and $\Xm$ are irreducible $F_n$-trees, hence by Theorem \ref{uniqueTree}, there is an equivariant 
isometry $k_m: \Xm \to X$. 
Let $f_{m,\infty}: X_m \to X$ be the equivariant affine map that sends $w_m \to w$ whenever $m = k_j$ of the subsequence and otherwise let $k_j$ be the smallest index closest to $m$ and let $f_{m,\infty}$ be the composition $X_m \to X_{k_j} \to X_\infty$. 
Then $\Lip f_{m,\infty} = \lim_{k \to \infty} \Lip f_{m,k}$. 
The required maps $X_m \to X$ are $k_m \circ f_{m,\infty}$ and are denoted by $f_{m,\infty}$ as well. 
\end{proof}

\subsection{A characterization of completion points}
Once we have a map $f_{m, \infty}: X_m \to X$ it is straightforward to characterize $X \in \Cos$. We begin with a definition of a quotient volume of an $F_n$-tree.

If $V$ is a finite metric tree then $V = \sqcup \sig_i$ a finite union of arcs 
$\sig_i$ with disjoint interiors. The volume of $V$ is the sum of lengths of $\sig_i$. 
It is easy to see that the volume does not depend on the decomposition of $V$ into non-overlapping arcs. 
\begin{defn}\label{tree_volume}
Let $T$ be an (infinite) $F_n$-tree. The quotient volume of $T$ is 
\[ qvol(T) = \inf \{ vol(V) \mid V \subset T \text{ finite forest and } F_n \cdot V = T \} \]
\end{defn}

\begin{prop}\label{howtogen}
If $h: R \to T$ is an $L$-Lipschitz equivariant map then \[ qvol(T) \leq L \cdot qvol(R)\]
\end{prop}
\begin{proof} For each subset $V \subset R$, 
$vol(h(V)) \leq L vol(V)$. 
Moreover, if $F_n \cdot V = R$ then the orbit of $h(V)$ covers $T$. Therefore,
$qvol(T) \leq L qvol(R)$. 
\end{proof}
If $S$ is a simplicial tree then $qvol(S)$ is equal to the sum of lengths of edges 
of $S/F_n$. More generally, 

\begin{prop}
Let $T$ be an $F_n$-tree, $U \subset T$ a finite subtree and \[P=\{ g \in F_n \mid gU \cap U \neq \emptyset \}\] then $P$ generates $F_n$ if and only if $F_n \cdot U = T$.
\end{prop}
\begin{proof}
Suppose $P$ generates $F_n$, we show that $F_n \cdot U$ is connected and by minimality it must coincide with $T$. Let $W^k$ be the set of words $g \in F_n$ that can be written as $g=p_1 \dots p_k$ for $p_i \in P$. For each $i$, $p_1 \cdots p_{i-1} U \cap p_1 \cdots p_i U \neq \emptyset$. Therefore, $\cup_{k} \cup_{g\in W^k} gU = F_n \cdot U$ is connected. \\
Conversely, suppose $F_n \cdot U = T$, choose a basepoint $u \in U$ and consider $gu$. Let $g_1, \dots g_k \in F_k$ so that the geodesic from $u$ to $gu$ passes linearly through $g_1U, \dots , g_kU$ and $g_k = g$. Since $g_iU \cap g_{i+1}U \neq \emptyset$ we have $g_i^{-1} g_{i+1} \in P$. Moreover, since $u \in U$ then $g_1 \in P$, thus 
\[ g = g_1 \cdot g_1^{-1} g_2 \cdot g_2^{-1} g_3 \dots g_{k-1}^{-1} g_k \] is a multiple of elements in $P$.
\end{proof}

\begin{prop}\label{FindUandS}
For every $F_n$-tree $T$ such that $[T] \in \ol\os$ and for every $\eps>0$ there is a finite and connected subtree $U$, and a finite generating set $\calS$ of $F_n$ such that 
\begin{enumerate}
\item For each $g \in \calS$, $gU \cap U \neq \emptyset$, and
\item $vol(U) \leq qvol(T) + \eps$.
\end{enumerate}
Additionally, there is a simplicial $F_n$-tree $T'$ admitting an equivariant 1-Lipchitz quotient map $p: T \to T'$, so that $qvol(T') = qvol(T)$.
\end{prop}
\begin{proof}
We will use a result of Levitt \cite{Lev} about countable groups acting on $\RR$-trees such that the action is a $J$-action. This condition is satisfied by every $F_n$ tree $T$ whose homothety class $[T]$ lies in the closure of Outer Space. Levitt shows in the Lemma on page 31 that if $B \subset T$ is the set of branch points of $T$, then the number of $F_n$ orbits of $\pi_0(T-\ol{B})$ is finite. Each connected component of $T-\ol{B}$ is an arc. We choose arcs $\sig_1, \dots, \sig_k$ so that the union of their $F_n$ orbits cover $T-\ol{B}$. Let $\theta_1, \dots , \theta_k$ be the associated lengths, and let $\eps>0$ be smaller than $\frac{\theta_i}{2}$ for each $i$. 
Consider $N_1 \subset T$ the $\eps$-neighborhood of $F_n \cdot \sig_1$. If $N_1$ is connected then by minimality $N_1 = T$. But the midpoint of $\sig_2$ cannot be contained in $N_1$ since $\eps< \frac{\theta_2}{2}$, hence $k=1$. 
Otherwise, $N_1$ is not connected. Denote by $S_1$ the $\eps$-neighborhood of $\sig_1$. Therefore, there exists an $i$ and an element $a \in F_n$ so that $S_1 \cap a \cdot \sig_i \neq \emptyset$. We reorder the arcs so that $i=2$ and change $\sig_2$ with an element in its orbit so that $a=1$. Let $N_2$ be the $\eps$-neighborhood of $F_n \cdot (\sig_1 \cup \sig_2)$ if $N_2$ is connected we are done, otherwise continue choosing representatives $\sig_3 , \dots , \sig_k$ such that the convex hull $W = Hull(\cup_{i=1}^k \sig_i)$ has total volume $\leq \sum_{i=1}^k \theta_i + k \eps$. 

Each component of $V = T \setminus F_n \cdot \bigcup_{i=1}^k \sig_i$ is a tree. There are finitely many orbits of connected components of $V$. Each orbit of connected components of $V$ meets $W$. Let $J_1, \dots , J_m$ be connected components of $V$, that meet $W$ and belong to distinct orbits and so that the orbit $W \cup \bigcup_{i=1}^m J_i$ is equal to $T$. 
For each $i$, $J_i$ is a tree and let $H_i < F_n$ be the stabilizer of $J_i$ in $F_n$. Then $H_i$ is a finite rank free group. Since $B \cap J_i$ is dense in $J_i$, then the action of $H_i$ on $J_i$ has dense orbits. By \cite{LL} there exists a basis $\mathcal{B}_i$ of $H_i$ and a point $y_i \in J_i$ such that $d(y_i,W)<\eps$ and $d(y_i, h \cdot y_i)<\eps$ for all $h \in \mathcal{B}_i$. Define
\[
U = Hull \left( W \cup \bigcup_{i =1}^m \{ h \cdot y_i \mid h \in \mathcal{B}_i \} \right) \]

We note that $U$ is a finite tree and the volume of $U$ is approximately equal to that of $W$ which is approximately equal to the volume of the sum of lengths of $\sig_1, \dots, \sig_k$. 
\begin{eqnarray*} 
vol(U) & \leq & vol(W) + \sum_{i=1}^m (d(y_i,W) + \sum_{y \in \mathcal{B}_i}d(y_i, h \cdot y_i)) \\
& \leq & vol(W) + \sum_{i=1}^k (|\mathcal{B}_i|+1) \eps \\
& \leq & \sum_{i=1}^k \theta_i + \left( k + \sum_{i=1}^k(|\mathcal{B}_i|+1) \right) \eps 
\end{eqnarray*}

We claim that the translates of $U$ cover $T$. Indeed, the translates of $Hull\{ h \cdot y_i \mid h \in \mathcal{B}_i \cup \{1\} \}$ cover $J_i$ and the orbit of the union of $J_i$ cover $V$ which is the complement of translates of the segments $\sig_1,\dots, \sig_k$. Therefore, $qvol(T) \leq vol(U) \leq \sum_{i=1}^k \theta_i + \left( k + \sum_{i=1}^k(|\mathcal{B}_i|+1) \right) \eps$. But since the $\sig_i$'s belong to distinct orbits, $qvol(T) \geq \sum_{i=1}^k \theta_i$. In conclusion, $qvol(T) = \sum_{i=1}^k \theta_i$ and is within a multiple of $\eps$ away from $vol(U)$. 

We define the tree $T'$ to be the result of collapsing the sets $g \cdot J_i$ for each $i \in \{1,\dots, m\}$ and $g \in F_n$. Let $p \from T \to T'$ be the quotient map. Then $T'$ is a simplicial tree whose edges correspond to translates of the $\sig_i$'s. Let $U' = p(U)$ then $U'$ is connected. The set 
$P' = \{ g \in F_n \mid |g|>0 \text{ and } gU' \cap U' \neq \emptyset \}$ is finite. Let 
\[ \mathcal{S} = P' \cup \bigcup_{i=1}^k \mathcal{B}_i. \]
It is elementary to show that $\cal{S}$ generates the set $P = \{ g \in F_n \mid gU' \cap U' \neq \emptyset \}$. Since $P$ generates $F_n$ (Proposition \ref{howtogen}) , then $\cal{S}$ generates $F_n$. Moreover, it is also clear that $g \cdot U \cap U \neq \emptyset$ for each $g \in \cal{S}$. We leave the details to the reader. 
\end{proof}

\begin{prop}\label{forwardDirection}
If $X$ is a limit of a forwards Cauchy sequence in $\os$ then it has unit quotient volume.
\end{prop}
\begin{proof}
Let $x_m$ be a forward Cauchy sequence and $X$ its limit. 
For all $\eps>0$ there is an $N(\eps)$ so that for $m>N(\eps)$, $\Lip(f_{m,\infty}) < 1+\eps$. Therefore, $qvol(X) \leq (1+ \eps) qvol(X_m) = 1+\eps$, since $\eps$ was arbitrary, $qvol(X) \leq 1$. \\
\indent To show the other inequality: suppose by contradiction that $vol(X)<1$. By Proposition \ref{FindUandS} there exists a finite 
subtree $U$ in $T$
such that $vol(U) = c <1$ and a finite generating set $\calS$ of $F_n$ so that for all 
$g \in \calS$, $gU \cap U \neq \emptyset$. 
Suppose that $U$ is a union of $k$ non-overlapping 
segments. Let $\eps = \frac{1-c}{6nk}$ and assume $m$ is 
large enough so that there is a set $U' \subset X_m$ with an $\calS$-invariant 
$\eps$-relation to $U$. 
Let $\theta_1, \dots, \theta_k$ be the lengths of the arcs $\sig_1, \dots, \sig_k$ of $U$. Then there exist arcs $\tau_1, \dots \tau_k$ in $U'$ so that 
$|len(\tau_i) - len(\sig_i)'|< \eps$ for each $i$. 
If $\tau_1, \dots , \tau_k$ do not cover $U'$ let $\tau_{k+1}, \dots , \tau_J$ be the remaining arcs. Since the valence at each vertex is no greater than $3n-3$ we have $J<(3n-3)k$. Moreover, since the $\eps$-relation is onto, the length of $\tau_j$ for $k+1\leq j \leq J$ is no greater than $\eps$. Therefore,
\[ \vol{U'} \leq \sum_{i=1}^J len(\tau_i) \leq \sum_{i=1}^k len(\tau_i) + J \eps \leq \vol U + (k+J)\eps<1 \]
However, for each $g \in \calS$, $gU' \cap U' \neq \emptyset$. Therefore, $qvol(X_m) \leq vol(U')<1$ which is a contradiction. 
\end{proof}

\begin{prop}\label{forwardDirection2}
If $X$ is a limit of a forwards admissible sequence in $\os$ then it admits no arc with a non-trivial stabilizer. 
\end{prop}
\begin{proof}
Let $\{X_m\} \subset \os$ be a forwards admissible sequence and $X$ its limit. 
The idea is 
that an arc stabilizer will take up a definite part of the volume which would lead to 
$X_m$ having less than unit quotient volume. 

Let $X'$ be the simplicial tree provided in Proposition \ref{FindUandS} and let $\theta$ be the length of the smallest edge in $X'$. We take $\eps$ to be smaller than $\theta/3$ and let $U$ be a finite subtree of $U$ so that $\vol U< 1 + \eps$ and let
$\calS \subset F_n$ be a generating set such that for each $g \in \calS$, $gU \cap U \neq \emptyset$. 

Suppose $U$ contains an arc 
$\nu$ and $a \in F_n$ such that $a \cdot \nu = \nu$. 
A segment with a non-trivial 
stabilizer is not contained in a dense subtree (see e.g. \cite[Lemma 4.2]{LL}). Thus we may choose $len(\nu,X) \geq \theta$. 
Let $U' \subset X_m$ be a set with an $\calS \cup \{a\}$ equivariant $\eps$-relation to 
$U$. 
As in the proof of Proposition \ref{FindUandS} we have $vol(U') < qvol(X) + (3n-2)k \eps$. 
Let $\nu'=[p,q]$ be a segment approximating $\nu$ in $U'$, then 
$len(\nu') \geq \theta - \eps$. 
We claim that $len(a \nu' \cap \nu') > len(\nu') - 2 \eps$. 
Let $p,q$ be the endpoints of $\nu'$, then the 
segments $[p,ap]$ and $[q,aq]$ 
have length bounded above by $\eps$ choosing $\eps< \frac{len(\nu')}{3}$
we have $[p,ap] \cap [q,aq] = \emptyset$. Let $[m,n]$ be the bridge between $[p,ap]$ and $[q,aq]$. 
Then $len([m,n])> len(\nu') - 2 \eps > \eps$ and $[m,n] = \nu' \cap a \nu'$. 
Since the action of $a$ is hyperbolic, both segments $[p,ap]$ and $[q,aq]$ intersect $A_{X_m}(a)$. Therefore $[m,n] \subset A_{X_m}(a)$ and $l(\rho_m(a),X_m)< \eps$. 
Thus we may chop off most of the segment $[m,n]$ in $U'$ leaving a segment of length $ \leq \eps$, and get a set $U''$ whose translates cover $X_m$. However, 
\begin{eqnarray*}
vol(U'') & \leq &vol(U') - ( len[m,n] - \eps) \\ & \leq & (vol(U) + (3n-2)k \eps) - ( (\theta - 3\eps) - \eps) 
< 1- \theta + (3n+3)k \eps. \end{eqnarray*}
Therefore, if $\eps< \frac{\theta}{2(3n+3)k}$ then $vol(U'')<1$ and $X_m$ is covered by translates of $U''$ which is a contradiction to $qvol(X_m) = 1$. Hence there are no edges with non-trivial stabilizers. 
\end{proof}

A very small $F_n$ tree $T$ gives rise to a graph of actions. 
\begin{defn}\label{graphOfDefn}
\cite{Lev} A \emph{graph of actions} $\mathcal{G}$ consists of 
\begin{enumerate}
\item a \emph{metric graph of groups} which consists of the following data: a metric graph $G$, with vertex groups $H_v$, and edge groups $H_e$ and injections $i_e: H_e \to H_v$ when $v$ is the initial point of the oriented edge $e$.
\item for every vertex $v$, an action of $H_v$ on an $\RR$-tree $T_v$.
\item for every oriented edge $e$ with $v = i(e)$ there is a point $q_v \in T_v$ which is fixed under the subgroup $i_{e}(H_e)$. 
\end{enumerate}
\end{defn}
Let $T$ be a very small $F_n$ tree, one constructs the simplicial $F_n$-tree $T'$ and $p: T \to T'$ as described in Proposition \ref{FindUandS}. The graph $G$ is the quotient of $T'$ by the $F_n$-action. Lifting an edge $e$ of $G$ to $e'$ in $T'$ define $H_e = \text{Stab}(e')$ and $H_{i(e)} = \text{Stab}_{F_n}(i(e'))$. For a vertex $v$ in $G$ let $v'$ be some lift of $v$ and let $T_v$ be the preimage of $v$ in the tree $T$. The point $q_e$ of $T_v$ is the point $T_v \cap p(e')$. 
Conversely, given a graph of actions, one can combine the universal cover of $G$ with the trees $T_v$ to obtain a very small $F_n$-tree \cite[Theorem 5]{Lev}.

\begin{prop}\label{backDir}
If $T$ is a very small $F_n$ tree with unit quotient volume and no arc stabilizers then there exists a forwards Cauchy sequence $\{X_m\} \subset \os$ such that $\lim_{m \to \infty} X_m = T$. 
\end{prop}
\begin{proof}
Let $\GG$ be the Levitt graph of actions of $T$. Since all edge groups are trivial, 
all vertex groups are free factors. Let $V$ be the set of vertices of $\GG$ with non-trivial vertex 
groups. For each $v \in V$ there is a tree $R_v$ in $T$, invariant under (a conjugate of) the vertex group $H_v$ 
and so that $H_v$ acts on $R_v$ with dense orbits. Levitt and Lustig \cite{LL} show that 
for every $\eps$ there is a free simplicial tree $S_{v,\eps}$ that admits a 1-Lipschitz equivariant map $f_{v,\eps} \from S_{v,\eps} \to R_v$ and $qvol(S_{v, \eps}) = \eps$. For an edge $e$ in $G$ (the underlying graph of $\GG$) let $q_e^\eps = f_{i(e),\eps}(q_{i(e)})$. 
We construct a new graph of actions $\GG_\eps$ by replacing $R_v$ and $q_e$ in $\GG$ with $S_{v,\eps}$ and $q_e^\eps$. 
The resulting graph of actions $\GG_\eps$ can be developed to a simplicial tree $X_\eps$ with quotient volume equal to $1+M \eps$ where $M$ is bounded uniformly by $n$. 
There is an equivariant $1$-Lipschitz map $f_{\eps}$ from $X_\eps$ to $X$ that restricts to $f_{v,\eps}$ on each tree $S_{v,\eps}$. Moreover, for 
$\eps'<\eps$ there is an equivariant $1$-Lipschitz map $f_{\eps,\eps'}$ from $X_{\eps}$ onto $X_{\eps'}$. 
To make $X_\eps$ have unit quotient volume we replace $X_\eps$ with $X_\eps'$ by rescaling the edges (outside of the dense trees) by $1/\left( 1+ \sum_{i=1}^{|V|} qvol(S_{v,\eps}) \right)$. Thus $qvol(X_\eps') = 1$. Moreover, define $f_{\eps,\eps'}'$ to be the affine map induced by $f_{\eps,\eps'}$ and $f_\eps'$ the map induced by $f_\eps$. Then $\lip{f_\eps'} \leq 1+|V|\eps$ and also $\lip{f_{\eps,\eps'}' }\leq 1+|V|\eps$. Therefore, $\{X_{1/m}\}$ is a forwards Cauchy sequence and $X$ is its forward limit.
\end{proof}

We can now prove Theorem \ref{myB} from the introduction. 
\begin{proof}[Proof of Theorem \ref{myB}]
This follows from Propositions \ref{forwardDirection}, \ref{forwardDirection2} and Proposition \ref{backDir}.
\end{proof}

\section{Description of the distance in the completion}

The distance in the completion is defined in Definition \ref{defHat}. We will show that it can also be defined as follows: Let $\Cos$ to be the set of very small $F_n$-trees with unit volume and no non-trivial arc stabilizers. For $X,Y \in \Cos$ we let 
\begin{equation}\label{Cosdistance} 
d'(X,Y) = \log \sup \left. \left\{ \frac{l(g,Y)}{l(g,X)} \right| g \in F_n \right\}
\end{equation} 
Notice that for $X,Y \in \os$, $d'(X,Y) = d(X,Y)$.

\begin{prop}\label{propd'}
The function $d' \from \Cos \times \Cos \to \RR\cup \{\infty\}$ satisfies:
\begin{enumerate}
\item The directed triangle inequality.
\item If $d'(X,Y) = d'(Y,X) = 0$ then $X=Y$. 
\item If there exists $g \in F_n$ with $l(g,X)=0$ and $l(g,Y)>0$ then $d'(X,Y) = \infty$. 
\item If there exists an $L$-Lipschitz equivariant map $f \from X \to Y$ then $d'(X,Y) \leq \log L$. 
\end{enumerate}
\end{prop}
\begin{proof}
\begin{enumerate}
\item This is obvious from the properties of $\sup$. 
\item If $X \neq Y$ then their length functions are different. Therefore, there exists $g \in F_n$ such that $l(g,X)<l(g,Y)$ or $l(g,Y)< l(g,X)$ which implies, $d(X,Y) \neq 0$ or $d(Y,X) \neq 0$.
\item This is obvious.
\item For each $p,q \in X$ we have $d(f(p), f(q)) \leq L d(p,q)$. Therefore, for each $g$ that is eliptic in $X$ - it is also eliptic in $Y$. Moreover, if $g$ is hyperbolic in $X$ with translation length $l(g,X)$ then $l(g,Y) \leq L l(g,X)$. $\qedhere$
\end{enumerate}
\end{proof}

\begin{prop}\label{distIsLip}
For every $X,Y \in \Cos$, $d'(X,Y) = \hat d(X,Y)$. 
\end{prop} 
\begin{proof}
By forwards continuity, $\hat d = d = d'$ for $X,Y \in \os$. 
Let $\{ X_m \}_{m=1}^\infty, \{Y_k \}_{k=1}^\infty$ be Cauchy sequences in $\os$ such that $X=\lim_{m\to \infty}X_m$ and $Y=\lim_{k \to \infty}Y_k$ as length functions. 
We need to prove: 
\begin{enumerate}
\item $d'(X,Y) = c< \infty$ if and only if for all $\eps>0$ there exists an $N=N(\eps)$ such that for all $m>N$ there is a $K=K(m,\eps)$ such that $| d(X_m,Y_k) -c |<\eps$ for all $k>K$.
\item\label{case2} $d'(X,Y) = \infty$ iff for all $r$ there is an $N(r)$ such that for al $m > N(r)$ there is a $K(m,r)$ such that $d(X_m, Y_k)> r$ for all $k>K$. 
\end{enumerate}
By the triangle inequality we have $d'(X,Y) \geq d'(X_m,Y) - d'(X_m,X)$, thus by Proposition \ref{propd'}(4) for large enough $m$, 
\begin{equation}\label{triangle}
d'(X,Y) \geq d'(X_m,Y) - \eps 
\end{equation}
Since there exists an equivariant map $X_m \to Y$, the distance $d'(X_m, Y)$ is finite (by Proposition \ref{propd'}(4) ).
Fix $m$, and let $\beta_1, \dots ,\beta_s$ be the list of candidates of $X_m$. 
Choose $K= K(m,\eps)$ large enough so that for all $k>K$: 
\begin{enumerate}
\item if $l(\beta_i, Y) = 0$ then $l(\beta_i, Y_k) < injrad(X_m)$ and 
\item if $l(\beta_i,Y) >0$ then $|l(\beta_i,Y_k) - l(\beta_i,Y)| < \eps l(\beta_i,Y)$. This is possible since the list of candidates in $X_m$ is finite, and $\lim_{k \to \infty} l(\beta_i, Y_k) = l(\beta_i,Y)$.
\end{enumerate} 
Let $\gamma$ be a candidate of $X$ that is elliptic in $Y$, by item (1) 
$\frac{l(\gamma,Y_k)}{l(\gamma,X_m)}<1$, so $\gamma$ cannot 
realize the distance $d(X_m,Y_k)$. 
Let $\delta$ be the candidate that is a witness to the distance $d(X_m,Y_k)$ 
then by item (2), \[l(\delta,Y_k) \leq (1+\eps) l(\delta,Y)\] 
dividing by $l(\delta,X_m)$ and taking the log we get $d(X_m,Y_k) \leq \log(1+\eps) + d(X_m,Y)$. Combining this with inequality (\ref{triangle}), there is a constant $C$ so that for all large enough $m,k$: 
\begin{equation}\label{eqFirstDir}
d'(X,Y) \geq d'(X_m,Y_k) - C \eps.
\end{equation}

If $d'(X,Y) < \infty$ then for all $\eps$ there is some $b \in F_n$ that $\eps$ approximates the distance in equation \ref{Cosdistance}.
Thus, $l(b,X)>0$ and let $N(\eps)$ be such that for all $m>N(\eps)$, 
$|l(b, X_m) - l(b,X)|< \eps l(b,X)$. 
Thus \[\frac{l(b,Y)}{l(b,X_m)} \geq \frac{l(b,Y)}{(1+\eps) l(b,X)}\] 
which implies $d'(X,Y) \leq d'(X_m,Y) + \log(1+\eps)$ for $m>N(\eps)$. By the 
triangle inequality, $d'(X_m,Y) \leq d'(X_m,Y_k) + d'(Y_k,Y)$. Thus 
for $K(\eps)$ with the property that $k>K(\eps)$ implies $d'(Y_k,Y)<\eps$, we have 
\begin{equation}\label{eqOtherDir}
d'(X,Y) \leq d'(X_m,Y_k) + \eps + \log(1+\eps)
\end{equation}
for all $m>N(\eps)$ and $k>K(\eps)$. Thus, if $d'(X,Y)<\infty$ then equations \ref{eqFirstDir} and \ref{eqOtherDir} imply $d'(X,Y) = \hat d(X,Y)$. 

If $d'(X,Y) = \infty$ then either there is some $\beta$ so that $l(\beta,X) = 0$ and $l(\beta,Y) >0$, or for all $r>1$ there is some $\beta$ in $X$ so that $\frac{l(\beta,Y)}{l(\beta,X)} > 2r$. If the former occurs, then there exist $N$ and $K$ so that for $m>N,k>K$ we have $l(\beta,X_m)<\frac{l(\beta,Y)}{r }$ and $l(\beta,Y_k) \geq (1 - \frac{1}{r})l(\beta,Y)$ thus $\frac{l(\beta,Y_k)}{l(\beta,X_m)}\geq \frac{(1-\frac{1}{r})l(\beta,Y)}{\frac{1}{r} l(\beta,Y)} \geq r-1$ and $d(X_m,Y_k) > \log(r-1)$. If the latter occurs, then $l(\beta,X), l(\beta,Y) >0$ and there are $N,K$ large enough so that for all $m>N, k>K$ we have $l(\beta,X_m) \leq (1 + \frac{1}{r})l(\beta,X)$ and $l(\beta,Y_k) \geq (1 - \frac{1}{r})l(\beta,Y)$. Thus $\frac{l(\beta,Y_k)}{l(\beta,X_m)} \geq \frac{(1-\frac{1}{r}) l(\beta,Y)}{(1+\frac{1}{r})l(\beta,X)} \geq r$ and $d(X_m,Y_k)>\log r$ for $m>M$ and $k>K$.
In both cases we have shown item (\ref{case2}).
\end{proof}

\begin{notation}
From now on we denote both $\hat d$ and $d'$ by $d$. 
\end{notation}

When $T$ is any tree, the supremum in the formula for the distance might not be realized. We now show that if $T$ is simplicial then the supremum is realized and can be obtained by taking a maximum on a finite set of conjugacy classes which we call candidates. We first recall the definition of the Bass Group of a graph of groups.

\begin{defn}[The Bass group of a graph of groups.]
Given a graph of groups $\GG = (G, \{ G_v\}_{v \in V}, \{H_e\}_{e \in E}, \{ i_e: H_e \to G_{ter(e)} \})$ with $V$ be the set of vertices of $G$ and $E$ the set of edges of $G$ (see Definition \ref{graphOfDefn}). Let $F_E = F(\{t_e \mid e \in E\})$ denote the free group on the basis $E$, the Bass group of $\GG$, $\mathcal{B}(\GG)$ is the quotient of the free product
\[*_{v \in V} G_v * F_E / R \] where $R$ is the normal subgroup generated by 
\begin{enumerate}[a)]
\item $t_e^{-1} = t_{\bar e}$
\item $t_e i_e(g) t_e^{-1} = i_{\bar e}(g)$ for all $e \in E$ and $g \in H_e$.
\end{enumerate} 
A connected word $w$ in the Bass group has the form 
$w=r_0 t_1 r_2 t_2 \dots t_q r_q$ for which there is an edge path $e_1 \dots e_q$ in $G$ such 
that $r_0 \in G_{ini(e_i)}$, $r_i \in G_{ter(e_i)}$ and $t_i = t_{e_i}$. The word 
$w$ is a cyclic word if the edge path it follows is a loop. 
$w$ is reduced if either $r_0 \neq 1$ and $q=0$ or, $q>0$ and $w$ does not contain a subword of the form $t_e i_e(g) t_e^{-1}$. 
The \emph{fundamental group of the graph of groups} $\pi(\GG,P)$ based at the point $P \in G$ is the subgroup of $\mathcal{B}(\GG)$ of all cyclic subwords based at $P$. See \cite{CohL} for more detailed definitions.
\end{defn}

\begin{defn}\label{can}
A candidate $\al$ in a marked metric graph of groups $x$ is a cyclically reduced cycle word of the Bass group that follows a path of the following type:
\begin{enumerate}
\item an embedded loop
\item an embedded figure 8
\item a barbell
\item a barbell whose bells are single points 
\item a barbell which has one proper bell and one collapsed bell. 
\end{enumerate}
\end{defn}

The following Proposition is a generalization of what happens in Outer Space. 

\begin{prop} \label{ConstructLipMap}
If $S$ is simplicial and $T \in \Cos$ let 
\[ M_{stretch} = \max \{ st(\al,S,T) \mid \al \text{ a candidate}\}\] 
and let 
$m_{lip}= \infty$ if no equivariant Lipschitz map $S \to T$ exists, and otherwise 
\[m_{lip} = \min \{ \Lip(h) \mid h:S \to T \text{an equivariant Lipschitz map} \}.\] Then
\[ d(S,T) = \log M_{stretch} = \log m_{lip} \]
\end{prop}
\begin{proof}
We wish to show that if one of the quantities in the equations is infinite then so is the other. 
The proof that the quantities are equal in the case that they are finite is identical to the case both trees are in $\os$ and can be found in \cite{FM}.

If there exists some equivariant Lipschitz map $f: S \to T$ then $M_{stretch} \leq \Lip(f)$. Thus, if $M_{stretch} = \infty$ then there is no Lipchitz map $S \to T$. 
Conversely, 
suppose that $M_{stretch} < \infty$ so in particular, all of 
the elliptic elements of $S$ are also elliptic in $T$. 
More generally, if $H<F_n$ is elliptic in $S$ then it must be eliptic in $T$. Indeed suppose $g,g'$ both fix $p \in S$ and $g$ fixes $q$ in $T$ and $g'$ fixes $q'$ in $T$. Then $gg'$ is eliptic in $S$ (fixing $p$) but in $T$, since there are no fixed arcs, it is hyperbolic. 
We wish to construct an equivariant map from $S$ to $T$. 
Let $g \in F_n$ be eliptic in $S$ fixing $p \in S$. Suppose $g$ fixes $q$ in $T$. Define $f \from S \to T$ by setting $f( a \cdot p) = a \cdot q$ for all $a \in F_n$ and extending $f$ piecewise linearly on edges (there are some choices here). We claim that $f$ is well defined. Indeed, if $a \cdot p = b \cdot p$ then $a^{-1}b$ stabilizes $p$. Therefore, $a^{-1}b \in Stab_S(p) \subset Stab_T(q)$. Hence $a \cdot q = b \cdot q$. In conclusion, $M_{stretch} < \infty$ implies the existence of a Lipschitz map and hence $m_{lip}< \infty$. 
\end{proof}

\begin{question}
Does Proposition \ref{ConstructLipMap} hold even when $S$ is not simplicial?
\end{question}

When $T$ is a non-simplicial, there is a collapse map to a simplicial tree 
$T \to T'$ so that $d(T,T') = 0$ see Proposition \ref{FindUandS}. The next 
proposition shows that this cannot happen when $T$ is simplicial. This will be important in the proof that the simplicial completion is invariant under an isometry. 

\begin{prop}\label{zero_distance}
If $X$ simplicial and $Y \in \Cos$ so that $d(X,Y) = 0$ then $X=Y$. 
\end{prop}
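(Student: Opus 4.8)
The plan is to promote the hypothesis $d(X,Y)=0$ to the existence of an equivariant isometry $X\to Y$. Since $X$ is simplicial and $d(X,Y)=0<\infty$, the previous proposition (its $\log\min$ formula) supplies an equivariant Lipschitz map $h\co X\to Y$ with $\Lip(h)=1$; I take $h$ linear on edges. Recall from Section~3 that every tree in $\Cos$ has quotient volume $1$, so in particular $qvol(X)=qvol(Y)=1$.

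Next I run a volume comparison on a fundamental domain. Let $F\subset X$ be a lift of the underlying graph of the graph of groups $X/F_n$: a finite connected subtree with $F_n\cdot F=X$ and $vol(F)=qvol(X)=1$. Then $h(F)$ is a finite connected subtree of $Y$ whose translates cover $Y$ (as recorded in Section~3), so $1=qvol(Y)\le vol(h(F))\le \Lip(h)\,vol(F)=1$, and hence $vol(h(F))=vol(F)$. A $1$-Lipschitz map of the finite tree $F$ that does not lose total length must run at unit speed on every edge and must not fold two directions at any vertex of $F$ — either failure would make $vol(h(F))<vol(F)$ since length is subadditive under unions — so $h|_{F}$ is a local isometry, and a local isometry of a finite tree into an $\RR$-tree is an isometric embedding. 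By equivariance $h|_{gF}$ is an isometric embedding for every $g\in F_n$, so $h$ fails to be a local isometry on all of $X$ only if it folds one tile $g_1F$ against another tile $g_2F$ at a shared vertex. But $vol(h(F))=qvol(Y)$ makes $h(F)$ a minimal-volume fundamental domain for $Y$, and a minimal-volume fundamental domain meets each of its translates in a set of zero length (deleting a positive-length overlap from $h(F)$ leaves a set that still covers $Y$, contradicting minimality); a seam-fold at a vertex $v$ would force $g_1h(F)$ and $g_2h(F)$ to share a nondegenerate segment near $h(v)$, which is excluded. Hence $h$ is a local isometry on all of $X$, therefore an isometric embedding; its image is then a closed $F_n$-invariant subtree, so by minimality of $Y$ it is onto, and $h$ is an equivariant isometry. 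Thus $[X]=[Y]$ in $\Cos$.

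The step I expect to be the main obstacle is the last one: converting $vol(h(F))=qvol(Y)$ into control of how the isometrically embedded tiles $\{g\,h(F)\}$ are glued, i.e.\ ruling out seam-folds. If that bookkeeping is awkward, one can instead push the whole map down to the quotients: $h$ induces a surjection $\overline{h}\co X/F_n\to Y/F_n$ of finite metric graphs of total length $1$, sending each edge of $X/F_n$ isometrically onto an edge-path of $Y/F_n$; along the way one checks that $Y$ must be simplicial, since a subtree of $Y$ with dense orbits would be met by $h(F)$ in a positive-length set and hence collapse part of the unit volume of $\widehat{Y/F_n}$. A surjection of unit-volume finite graphs that is isometric on edges is forced (by the same subadditivity-of-length argument) to be a simplicial isomorphism, and lifting it back up produces the desired equivariant isometry $X\to Y$.
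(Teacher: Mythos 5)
Your overall strategy is the same as the paper's (use the $\log\min$ formula to get an equivariant $1$-Lipschitz map, then use $qvol(X)=qvol(Y)=1$ to force it to be an edge isometry and to exclude folds), but you package the fold exclusion differently: the paper splits folds according to whether the two identified edges lie in the same $F_n$-orbit, while you split according to whether they lie in a common tile $gF$ and dispose of cross-tile folds with the lemma that a covering subtree of minimal volume meets each of its translates in a set of measure zero. That lemma is where your argument has a genuine gap. Your one-line justification --- ``deleting a positive-length overlap from $h(F)$ leaves a set that still covers $Y$'' --- is not proved, and it is exactly the point where the hypothesis $Y\in\Cos$ beyond unit volume must enter: a deleted point $p\in h(F)\cap g\,h(F)$ is a priori only recovered via $g^{-1}p\in h(F)$, and if $g^{-1}p$ lies in the deleted set again you get nothing. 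This is not a technicality: if $Y$ had a nondegenerate arc with nontrivial stabilizer $\langle g\rangle$, every covering subtree would contain a positive-length piece of (a translate of) that arc, that piece lies in $h(F)\cap g\,h(F)$, and deleting it need not leave a covering set --- so the lemma you invoke is false for general very small trees, and any correct proof of it must use triviality of arc stabilizers. Your proposal never uses that hypothesis anywhere, which is a red flag, since Theorem \ref{character} says trivial arc stabilizers is precisely what distinguishes $\Cos$ (together with unit volume), and the paper's proof of Proposition \ref{zero_distance} hinges on it.

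The gap is fixable along either of two lines. Cleanest is the paper's dichotomy: at a fold identifying germs of edges $e_1,e_2$ of $X$ at a vertex, either $e_1,e_2$ lie in different orbits, in which case a definite amount of volume is lost and $qvol(Y)<1$, a contradiction; or $e_2=g e_1$ for some $g\neq 1$, in which case $f(ge_1)=f(e_1)$ shows $g$ fixes a nondegenerate segment of $Y$, contradicting $Y\in\Cos$. Alternatively, you can complete your deletion argument: given a nondegenerate segment $\sigma\subset h(F)\cap g\,h(F)$ with $g\neq 1$, choose a sub-segment moved entirely off itself by $g$ (possible because a hyperbolic $g$ moves any segment of diameter less than its translation length off itself, while a nontrivial elliptic $g$ fixes only a single point and no direction there --- this is where trivial arc stabilizers is used); deleting that sub-segment then does leave a covering family, contradicting $qvol(Y)=1$. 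With that inserted, your main-line argument goes through; the quotient-graph fallback in your last paragraph has the same unaddressed issue and is vaguer, so I would drop it.
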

\begin{proof}
By Proposition \ref{ConstructLipMap}, there is an equivariant map 
$f:X \to Y$ that is 1-Lipschitz. The map $f$ is onto (since $Y$ is minimal). Any map 
can be homotoped without increasing its Lipschitz constant, so that the restriction of 
the new map on each edge is an immersion or a collapse to one point. No edge is 
stretched since $\Lip(f) = 1$. No edge is shrunk or collapsed because 
$qvol(X) =1= qvol(Y)$. So $f$ restricted to each edge is an isometry. 
We claim that $f$ is an immersion which would finish the proof. 
Indeed if $f$ is not 
injective then $f(p) = f(p')$ and $f|_{[p,p']}$ is not immersed. Suppose $f$ is not 
an immersion at a neighborhood of $v$. Then there are two edges $e_1, e_2$ incident at $v$ so that 
$f(e_1), f(e_2)$ define the same germ. If $e_1, e_2$ are not in the same orbit then 
this would contradict $qvol(X) = qvol(Y)$ by Proposition \ref{forwardDirection} ($f$ loses a definite part of the volume). 
So assume there is a $g \in F_n$ such that $g \cdot e_1 = e_2$ (with the appropriate 
orientation). But then $f( g \cdot e_1 ) = f(e_1)$ so $g$ stabilizes a non-trivial segment in 
$Y$ which contradicts Proposition \ref{forwardDirection2}. Hence $f$ is a surjective isometric immersion 
i.e. an isometry. In particular, $Y$ is simplicial. \end{proof}

\section{Topologies on the simplicial metric completion}

\begin{defn}\label{defnSplittingComplex}
A \emph{free splitting} of $F_n$ is a simplicial tree $S$ along with a simplicial $F_n$ action 
on $S$, so that this action is minimal, non-trivial, irreducible, and edge stabilizers are 
trivial. The quotient graph $S/F_n$ is a finite graph and we say the \emph{covolume} 
of $S$ is the number of edges in this quotient graph. Two free splittings $S,S'$ are 
compatible if there is a free splitting $S''$ and equivariant edge collapses $S'' \to S$ 
and $S'' \to S'$. \\
The \emph{free splitting complex} $\fsc$ is the simplicial complex whose set of 
vertices is the set of free splittings of covolume 1 and there is a simplex spanned by 
$S_1, \dots S_k$ if they are pairwise compatible. This turns out to be equivalent to 
the existence of a splitting $S$ and maps $S \to S_i$ for $i=1, \dots , k$ that are 
equivariant edge collapses.
\end{defn}

Outer Space $\os$ naturally embeds in $\fsc$ as a subcomplex with missing faces. If we 
add the missing faces we obtain $\fsc$. Therefore, $\fsc$ is called the \emph{simplicial 
completion} of Outer Space. 

\begin{defn}
The (metric) simplicial completion $\scos$ is the set of points $X \in \Cos$ such that $X$ is simplicial. 
\end{defn}

By the characterization of the completion points in the boundary, Theorem \ref{myB}, the set of simplicial trees in the completion is the same as the simplicial completion, i.e. $\fsc$. In this section we compare two natural topologies on this set. 

\begin{defn}[The Euclidean topology on $\fsc$]
Let $\sig$ be a simplex in $\fsc$ and let $d_1(x,y)$ be the $l_1$ metric for $x,y \in \sig$ (using the edge lengths).
We define $B_\sig(x,\eps) = \{ y \mid d_1(x_i,y) <\eps \}$. The Euclidean ball around $x$ is $B_{Euc}(x,\eps) = \cup_{x \in \sig} B_\sig(x,\eps)$. The Euclidean topology is the topology generated by Euclidean balls. 
\end{defn}

There is also a natural Lipschitz topology on $\scos$.

\begin{defn}[The (incoming-)Lipschitz topology on $\scos$]
A basis for the Lipschitz topology is the collection of incoming balls, see Definition \ref{defBInOut}.
\end{defn}

The Lipschitz topology will be preserved under isometries of Outer Space. We show that the Euclidean topology coincides with the Lipschitz topology.

\begin{remark}The topology generated by the ``outgoing" balls is different from the Euclidean topology. Consider a point $x$ in the completion so that the underlying graph of the quotient $x/F_n$ is a single, non-separating edge (a one edge loop). For such $x$ and for all $x \neq y \in \scos$, $d(x,y) = \infty$. Hence the only open sets of the outgoing-Lipschitz topology containing $x$ are $\{ x \}, \scos$. 
\end{remark}

\begin{notation}
For $x \in \scos$ we denote $\theta(x)$ the smallest edge of $x$, and by $\inj(x)$ the 
injectivity radius of $x$. Also notice that we are considering two metrics here - the Lipchitz metric $d(x,y)$ and the Euclidean metric $d_1(x,y)$.
\end{notation}

\begin{lemma}\label{lem2}
The function $d( \cdot, x)$ is continuous with respect to the Euclidean topology on $\scos$. 
\end{lemma}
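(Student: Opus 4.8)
\textbf{Proof proposal for Lemma \ref{lem2}.}

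The plan is to fix a point $x \in FS_n$ and show that $y \mapsto d_{\mathrm{Lip}}(y,x)$ varies continuously as $y$ moves in the Euclidean topology. Since the Euclidean topology is built from balls $B_{\mathrm{Euc}}(x,\eps)$ which in turn are unions of simplex-balls $B_\sigma(x,\eps)$, it suffices to work simplex by simplex: for each closed simplex $\sigma$ (represented by a graph of groups $G$ with edges $e_1,\dots,e_J$ carrying lengths summing to $1$) I would show that the function $(y_1,\dots,y_J) \mapsto d_{\mathrm{Lip}}(y,x)$ is continuous on $\sigma$, and then check that the values agree on the common faces of simplices meeting $x$, so that the piecewise definition glues to a genuinely continuous function on a Euclidean neighborhood of $x$.

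For continuity within a single simplex $\sigma$, the key point is the formula from the Proposition preceding this lemma: since every $y \in \sigma$ is simplicial, $d(y,x) = \log\max\{\,\mathrm{st}(\alpha) \mid \alpha \text{ a candidate}\,\}$, and the set of candidates (embedded loops, figure eights, barbells, and the degenerate barbells of Definition \ref{can}) in the underlying graph of groups $G$ is a \emph{finite} combinatorial set that does not change as the edge lengths $y_i$ vary within $\sigma$. For each such candidate $\alpha$, both $l(\alpha, y)$ and $l(\alpha, x)$ are finite positive-real affine (indeed linear) functions of the edge-length vectors, so $\mathrm{st}(\alpha) = l(\alpha,x)/l(\alpha,y)$ is continuous in $y$ wherever $l(\alpha,y) \neq 0$; since $x$ is a fixed simplicial tree in $\Cos$ with unit volume and trivial edge stabilizers, and $y$ ranges over the same combinatorial simplex, one checks $l(\alpha,x)$ and $l(\alpha,y)$ stay bounded away from $0$ for the finitely many relevant $\alpha$ on the compact simplex $\sigma$. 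A finite maximum of continuous functions is continuous, and $\log$ is continuous, so $d_{\mathrm{Lip}}(\cdot, x)|_\sigma$ is continuous. (When $d(y,x) = \infty$ — which happens exactly when some candidate elliptic in $y$ fails to be elliptic in $x$, e.g. along a face where an edge collapses — one argues instead that the relevant stretch factor blows up to $+\infty$ as $y$ approaches that face, so continuity holds with the value $+\infty$, consistent with the extended-reals target.)

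The gluing across faces is where I expect the main obstacle to lie, and it is essentially bookkeeping about which combinatorial candidates in a big simplex $\sigma$ survive as candidates in a face $\tau$. When $y$ crosses from the interior of $\sigma$ into a face $\tau$ (obtained by collapsing some edges $e_i$, i.e.\ sending $y_i \to 0$), a candidate loop of $G$ either descends to a candidate of the collapsed graph of groups or becomes degenerate; I would verify that the maximum of $\mathrm{st}(\alpha)$ over $G$-candidates converges to the maximum over $\tau$-candidates, using that the lengths $l(\alpha,\cdot)$ are continuous through the collapse and that the degenerate barbell types in Definition \ref{can} were introduced precisely to account for the candidates that arise at such faces. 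Combined with the within-simplex continuity and the fact that only finitely many simplices contain $x$ (local finiteness of the simplicial structure on $FS_n$ near $x$), this shows $d_{\mathrm{Lip}}(\cdot, x)$ is continuous on a Euclidean neighborhood of $x$, and since $x$ was arbitrary, on all of $FS_n$.
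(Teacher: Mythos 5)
Your strategy (compute $d(y,x)$ by the finite candidate set of the graph underlying a simplex, prove continuity simplex by simplex, then glue) is genuinely different from the paper's, but as written it has two real gaps. First, the gluing step leans on the claim that only finitely many simplices contain a given point of $FS_n$; this is false. The free splitting complex is not locally finite: as the paper itself observes in the isometry section, a vertex with non-trivial vertex group can be blown up in infinitely many ways, so any point whose graph of groups has a non-trivial vertex group lies in infinitely many simplices, and a Euclidean ball around such a point is a union of balls in infinitely many simplices. Hence simplex-by-simplex continuity obtained from compactness of each closed simplex and finite maxima does not assemble into continuity; you need an estimate that is \emph{uniform} over all simplices containing the point. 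This is exactly what the paper's argument produces, without candidates: taking $\delta = \varepsilon\cdot(\text{smallest positive edge length of } y)$, for any $y'$ lying with $y$ in a common simplex and with coordinates $\delta$-close to those of $y$, the equivariant map $y' \to y$ collapsing the edges of $y$-length zero and rescaling the rest has Lipschitz constant at most $\frac{1}{1-\varepsilon}$, so $d(y',x) \leq d(y',y)+d(y,x) \leq \log\frac{1}{1-\varepsilon}+d(y,x)$ (and the same estimate in the other direction bounds $d(y,y')$, giving the reverse inequality), uniformly in the ambient simplex. Second, your conclusion only yields continuity of $d(\cdot,x)$ on a Euclidean neighborhood of $x$, and the closing ``since $x$ was arbitrary'' does not repair this: changing $x$ changes the function, so continuity of the fixed function $d(\cdot,x)$ at points $y$ far from $x$ is never addressed — yet that is what the lemma asserts and what its applications require (in Lemma \ref{lem1}(2) continuity is invoked at points of $A(x,r)$, which are bounded away from $x$, and Theorem \ref{topo} needs $B_{\mathrm{Lip}}(x,r)$ to be Euclidean-open at every one of its points).

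There are also inaccuracies inside the within-simplex step: $l(\alpha,y)$ is not bounded away from $0$ on a closed simplex (it vanishes on faces collapsing the support of $\alpha$), and $l(\alpha,x)$ may be $0$ when $\alpha$ is elliptic in $x$, so the asserted uniform positivity fails. Consequently the behaviour of $\max_\alpha \mathrm{st}(\alpha)$ as $y$ degenerates to a face — including your claims that the maximum over candidates of the big graph converges to the maximum over candidates of the collapsed graph, and that at a face point the distance is still computed by candidates of the bigger graph — is precisely the content you defer as ``bookkeeping''; it is the heart of your argument rather than a routine check, and even if carried out it would not resolve the uniformity problem above. I would replace the candidate computation by the direct Lipschitz-map estimate, which handles all simplices containing $y$ simultaneously and works at an arbitrary basepoint $y$, not only near $x$.
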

\begin{proof}
For $y \in \scos$ let 
$\delta = \min \left\{ \theta(y)\eps, \frac{\inj(y) \eps}{6n-6} \right\}$, 
we claim that for $y' \in B_{Euc}(y,\delta)$ we have $|d(y',x) - d(y,x)|< \eps$. 
Let $y' \in B_{Euc}(y,\delta)$ and let $\sig$ be a simplex containing $y,y'$, and 
enumerate the edges in the graph corresponding to $\sig$ by $e_1, \dots , e_J$. 
Thus, $y=(y_1, \dots , y_J), y' = (y_1', \dots , y_J')$ in the $\sig$ coordinates, and 
$|y'_i - y_i|< \delta$. By our choice of $\delta$, for $i$ so that $y_i>0$, $\frac{y_i}{y_i'}< \frac{y_i}{y_i-y_i\eps}= \frac{1}{1-\eps}$. Therefore,
\begin{equation}\label{eq1024} 
d(y',x) \leq d(y',y) + d(y,x) \leq \log\left( \frac{1}{1-\eps}\right) + d(y,x) 
\end{equation}
Let $\al$ be a realizing candidate for $d(y,y')$. Since $|y_i'-y_i|<\delta$ and $\al$ contains no more than $2(3n-3)$ edges we have
\[ \frac{l(\al,y')}{l(\al,y)} \leq \frac{l(\al,y) + 2(3n-3)\delta}{l(\al,y)}< 1+ \eps \]
Thus $d(y',x) \geq d(y,x) - d(y,y') \geq d(y,x) - \log(1+\eps) $. Combining with equation (\ref{eq1024}) we conclude the proof.
\end{proof}

\begin{cor}\label{EucFinerThanLip}
The Euclidean topology is finer than the Lipschitz topology.
\end{cor}

To show that they are in fact equal we need the following two lemmas. 

\begin{lemma}\label{lem1}
For every simplex $\sig$ in $FS_n$ 
\begin{enumerate}
\item For every $x \in \sig$ and for every face $\tau$ of $\sig$ so that $x \notin \tau$ there is an $\eps(x,\tau)>0$ such that $d(\tau,x)>\eps$.

\item Let $x \in \sig$, for all $r>0$ there is a $t(x,\sig,r)$ such that if $y \in \sig$ and $d(y,x)<t(x,\sig,r)$ then $d_1(y,x)<r$. 
\end{enumerate}
\end{lemma}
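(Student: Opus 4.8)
The plan is to prove both parts by exploiting compactness of the closed simplex $\sig$ together with the already-established continuity of $d_{Lip}(\cdot, x)$ on $FS_n$ (Lemma \ref{lem2}) and the Proposition that $d(\tau',x)=0$ with $x$ simplicial forces $x=\tau'$ (Proposition \ref{zero_distance}). For part (1), fix $x \in \sig$ and a face $\tau'$ not containing $x$. Since $x \notin \tau'$, no point $y \in \tau'$ can equal $x$, so by Proposition \ref{zero_distance} we have $d(\tau', x) > 0$ for every such $y$ --- wait, more carefully: I want a uniform lower bound over all $y \in \tau'$. The function $y \mapsto d(y,x)$ is continuous on the compact set $\tau'$ by Lemma \ref{lem2} (continuity in the Euclidean topology, and $\tau'$ is Euclidean-compact), hence attains a minimum at some $y_0 \in \tau'$. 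Since $y_0 \neq x$ (as $x \notin \tau'$) and $x$ is simplicial, Proposition \ref{zero_distance} gives $d(y_0, x) > 0$; call this minimum value $\eps(x,\tau')$. Then $d(\tau', x) \geq \eps(x,\tau') > 0$ as claimed. A subtle point to check is that $d(y,x)$ is genuinely finite on $\tau'$ so that "minimum" makes sense --- but there is an equivariant Lipschitz map from the simplicial tree $y$ to $x$ built edge-orbit by edge-orbit (as in the proof of Proposition \ref{zero_distance} and the finiteness discussion preceding the Question), so $d(y,x) < \infty$; alternatively one argues the minimum of a lower-semicontinuous $[0,\infty]$-valued function on a compact set is attained and only needs to be shown positive.

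For part (2), fix $x \in \sig$ and $r > 0$. I want a threshold $t(x,r)$ so that $y \in \sig$ and $d_{Lip}(y,x) < t$ imply $d_{Euc}(y,x) < r$. The natural approach is contrapositive and compactness: suppose no such $t$ exists, then there is a sequence $y_j \in \sig$ with $d_{Lip}(y_j, x) \to 0$ but $d_{Euc}(y_j, x) \geq r$ for all $j$. Since $\sig$ is Euclidean-compact, pass to a subsequence $y_j \to y_\infty \in \sig$ in the Euclidean topology, with $d_{Euc}(y_\infty, x) \geq r > 0$, so in particular $y_\infty \neq x$. On the other hand, by Lemma \ref{lem2} the function $d_{Lip}(\cdot, x)$ is Euclidean-continuous, so $d_{Lip}(y_\infty, x) = \lim_j d_{Lip}(y_j, x) = 0$. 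Since $x$ is simplicial, Proposition \ref{zero_distance} forces $y_\infty = x$, a contradiction. Hence the desired $t(x,r)$ exists.

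The main obstacle I anticipate is the interface between the two topologies in the compactness argument: I need the closed simplex $\sig$ to be compact in a topology fine enough that $d_{Lip}(\cdot,x)$ is continuous on it and that $d_{Euc}(\cdot, x) \geq r$ is a closed condition. The Euclidean topology on a single closed simplex is just the standard topology on a standard simplex, which is compact, and on that simplex Lemma \ref{lem2} already gives the needed continuity of $d_{Lip}(\cdot,x)$ (its proof is local and applies to points of $\sig$), and $d_{Euc}(\cdot,x)$ is manifestly continuous there; so this should go through cleanly. The other point requiring a little care is that in part (1) the face $\tau'$ may have $d(y,x) = \infty$ for some or all $y$ (e.g. if $x$ has a loop edge that $\tau'$ collapses) --- but this only helps, since we only need a positive lower bound, and $\infty > \eps$; so I would phrase the argument to allow $[0,\infty]$-valued distances and extract positivity from Proposition \ref{zero_distance} at a minimizing (or any non-infinite, non-$x$) point.
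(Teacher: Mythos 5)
Your proposal is correct and, for part (2), is essentially the paper's argument in contrapositive form: the paper takes the set $A(x,r)$ of points of $\sig$ at Euclidean distance $\geq r$ from $x$, extracts a Euclidean-convergent minimizing sequence, uses Lemma \ref{lem2} to pass to the limit, and gets positivity because $y\neq x$ forces $d(y,x)>0$ (which is Proposition \ref{zero_distance}, as you say explicitly). For part (1) your route differs mildly from the paper's: the paper first shows that each vertex $v_i$ of $\tau'$ satisfies $d(v_i,x)=\infty$ (by exhibiting a candidate loop through an edge surviving in $x$ but not in $v_i$), excises small Euclidean balls around these vertices, and then minimizes over the remaining compact set; you instead minimize the $[0,\infty]$-valued function $d(\cdot,x)$ over all of the closed face at once. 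That streamlined version works, but only via the fallback you name at the end, not via your first attempt: the claim that $d(y,x)<\infty$ for every $y\in\tau'$ is false in general --- the paper's own proof of part (1) shows the vertices of $\tau'$ are at infinite distance from $x$, and indeed an equivariant Lipschitz map $y\to x$ need not exist when $y$ makes elliptic a class that is hyperbolic in $x$. So you must argue with the extended-valued function, and there the property you need is lower semicontinuity of $d(\cdot,x)$ on the closed simplex, which is not literally what Lemma \ref{lem2} provides (its proof bounds $d(y',x)$ from above by $d(y,x)+\log(1+\eps')$ for $y'$ near $y$); it does, however, follow immediately from $d(y,x)=\log\sup_\gamma \frac{l(\gamma,x)}{l(\gamma,y)}$ together with the fact that each $l(\gamma,\cdot)$ is a continuous (linear) function of the edge-length coordinates on $\ol{\sig}$, so $d(\cdot,x)$ is a supremum of continuous $(0,\infty]$-valued functions. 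With that one sentence added, your argument for (1) is a clean and slightly more economical alternative to the paper's excision argument, and your argument for (2) matches the paper's.
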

\begin{proof}
To prove part (1), $\tau$ is compact in the Euclidean topology, and $d(\cdot,x)$ is continuous.
Thus $d(\cdot,x)$ achieves a minimum $\eps(x,\tau)$ on $\tau$.
If $\eps(x,\tau) = 0$ then there is a $y \in \tau$ with $d(y,x)= 0$ and by Proposition \ref{zero_distance} $y=x$ but $x \notin \tau$. Therefore $\eps(x,\tau) >0$. \\
To prove part (2), consider for $r>0$,
\[A(x,r) = \{y \in \sig \mid d_1(y,x) \geq r \}\] 
Since $A(x,r)$ is compact in the Euclidean topology, then there exists a minimum $t(x,\sig,r)$ to the set $\{d(a,x) \mid a \in A(x,r) \}$. As before, $t \neq 0$ since $x \notin A(x,r)$. Therefore, $d(y,x)<t$ and $y \in \sig$ implies that $d_1(y,x)<r$. 
\end{proof}

\begin{lemma}\label{InSimplex}
For every $x \in \scos$ there is a constant $\eps(x)>0$ such that 
for all $y \in \scos$ with 
$d(y,x)< \eps$ there exists a simplex $\sig \in \fsc$ that contains both $x$ and $y$. 
\end{lemma}
\begin{proof}

Let $x$ be contained in the interior of the simplex $\tau$. By Lemma \ref{lem1} for 
any simplex $\sig \supseteq \tau$ and for any face $\tau'$ of $\sig$ that does not 
contain $x$ there is an $\eps = \eps(x,\tau')$ so that $d(\tau',x)> \eps$. 
We show that we can find such an $\eps$ independent of $\tau'$. The difficulty is that the link of $x$ is potentially infinite. 
Recall that $\out$ acts cocompactly on $\fsc$ by simplicial automorphisms.
For an automorphism $\phi \in \out$, $\eps(\phi(x), \phi(\tau)) = \eps(x,\tau)$. For $H<\out$ a finite index torsion free subgroup, the quotient $\fsc/H$ is a finite CW-complex, therefore, there are finitely many isometry types of simplicies $\sig$ containing $x$. Therefore the set
\[ \{ \eps(x,\sig,\tau) \mid x \in \sig, x \notin \tau \subset \sig \}\]
is finite and therefore achieves a minimum $\eps(x)$. 

Let $y \in \scos$ such that $d(y,x)< \infty$ and there is no simplex 
containing both then $d(y,x)>\frac{\eps(x)}{2}$. 
By Proposition \ref{ConstructLipMap} there is an equivariant Lipschitz 
map $f:y \to x$. 
Let $y'$ be a point in the same simplex as $y$ so that there is a Stallings fold 
sequence from $y'$ to $x$ (perturb the edges lengths in $y$ so that the stretch of the 
edges of the optimal map are all rational). Moreover, we can guarantee that 
$d(y',x)<d(y,x)+ \frac{\eps(x)}{2}$. Let 
$f':z \to x$ the last fold in the sequence. Then $z$ and $x$ are contained in the same 
simplex. Moreover, $d(y',x)>d(z,x)$, and $d(z,x) > \eps(x)$ hence $d(y,x)> 
\frac{\eps(x)}{2}$. 
\end{proof}

\begin{theorem}\label{topo}
The Lipschitz topology and the Euclidean topology on $\scos$ coincide. 
\end{theorem}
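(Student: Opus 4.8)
The plan is to prove the two inclusions of topologies separately. That the Lipschitz topology is contained in the Euclidean topology is immediate from Lemma \ref{lem2}: since $y \mapsto d(y,x)$ is continuous on $FS_n$ with the Euclidean topology, every basic Lipschitz set $B_{Lip}(x,r)=\{y : d(y,x)<r\}$ is Euclidean-open, and these sets form a basis for the Lipschitz topology, so every Lipschitz-open set is Euclidean-open. I expect no difficulty here.

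For the reverse inclusion I must produce, for each $x\in\scos$ and each $r>0$, a Lipschitz neighbourhood $B_{Lip}(x,t)$ contained in $B_{Euc}(x,r)$. Lemma \ref{in_simplex} gives a radius $\eps(x)>0$ such that $d(y,x)<\eps(x)$ forces $x$ and $y$ to lie in a common simplex; hence once $t\le\eps(x)$ the set $B_{Lip}(x,t)$ meets only simplices through $x$, and inside each such $\sigma$ Lemma \ref{lem1}(2) supplies a threshold $t(x,\sigma,r)>0$ below which $B_{Lip}(x,t)\cap\sigma\subseteq B_\sigma(x,r)\subseteq B_{Euc}(x,r)$. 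The real work is to make this threshold uniform over all the simplices through $x$. Here I would rerun the cocompactness argument from the proof of Lemma \ref{in_simplex}: by cocompactness of the $\out$-action on $FS_n$ there are finitely many orbit representatives of simplices $\sigma_1,\dots,\sigma_m$; for any simplex $\sigma\ni x$ pick $\phi\in\out$ with $\sigma\phi=\sigma_i$, so that $x\phi$ lies in $(x\cdot\out)\cap\sigma_i$, which is finite (as observed in that proof: the setwise stabiliser of the support simplex of $x$ acts on it through a finite quotient, while its pointwise stabiliser fixes $x$). Thus only finitely many pairs $(\sigma_i,z)$ with $z\in(x\cdot\out)\cap\sigma_i$ arise. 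Setting $t_1$ to be the minimum of the thresholds $t(z,\sigma_i,r)$ over this finite list and $t=\min\{t_1,\eps(x)\}$, and using that $\out$ acts on $\scos$ by $d$-isometries (by Corollary \ref{isomX}) that are simplicial, hence carry $B_{Euc}(x,r)$ onto $B_{Euc}(x\phi,r)$, one transports the conclusion of Lemma \ref{lem1}(2) back: if $d(y,x)<t$ then $y,x$ share a simplex $\sigma$, $x\phi=z$ for the appropriate $i$, $d(y\phi,z)=d(y,x)<t_1\le t(z,\sigma_i,r)$ with $y\phi\in\sigma_i$, so $y\phi\in B_{\sigma_i}(z,r)$ and therefore $y\in B_\sigma(x,r)\subseteq B_{Euc}(x,r)$.

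The main obstacle is precisely this uniformity. The pointwise estimate of Lemma \ref{lem1}(2) controls only one simplex at a time, and the link of a point of $\scos$ is genuinely infinite — a free factor has infinitely many free splittings — so one cannot merely intersect finitely many open conditions; it is the combination of cocompactness with the finiteness of the orbit inside a fixed simplex that saves the argument. If one prefers to avoid the explicit bookkeeping of thresholds, the same ingredients yield a contradiction argument instead: a sequence $y_k$ with $d(y_k,x)\to 0$ but $y_k\notin B_{Euc}(x,r)$ can, after translating by $\out$ and passing to subsequences, be placed in a single $\sigma_i$ with $x\phi_k$ stabilising at a fixed $z$; compactness of $\sigma_i$ then produces a Euclidean limit $y'\ne z$ with $d(y',z)=\lim_k d(y_k\phi_k,z)=0$ by Lemma \ref{lem2}, contradicting Proposition \ref{zero_distance} since $z$ is simplicial.
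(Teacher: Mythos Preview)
Your proposal is correct and follows essentially the same route as the paper: both directions are handled exactly as you describe, with Lemma~\ref{lem2} giving one inclusion immediately, and the other obtained by combining Lemma~\ref{in_simplex} (to land in the star of $x$), Lemma~\ref{lem1}(2) (per-simplex threshold), and the cocompactness of the $\out$-action together with the finiteness of $(x\cdot\out)\cap\sigma_i$ to make the threshold uniform, then transporting via the fact that $\out$ acts by isometries of both metrics. Your optional contradiction argument is a pleasant alternative packaging of the same ingredients, but not needed.
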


\begin{proof}
By Corollary \ref{EucFinerThanLip} it is enough to show that the Lipschitz 
topology is finer than the Euclidean topology. Let $B_{Euc}(x,r)$ be a 
ball in the Euclidean topology. By proposition \ref{InSimplex}, we may 
choose $\eps$ small enough so that $B_{\text{in}}(x,\eps)$ is contained in the star of $x$. 
By lemma \ref{lem1}, there is a $t(x,\sig,r)$ so that for all $y\in \sig$ if 
$d(y,x)<t(x,\sig,r)$ then $d_1(y,x)<r$. We need to find $t$ that works for 
all $\sig$ containing $x$. 
As in the proof of \ref{InSimplex} we use the simplicial action of $\out$ on $\fsc$. 
The quotient $\fsc/H$ where $H<\out$ is finite index and torsion free is compact. 
Since $\out$ acts by Lipschitz isometries as well as Euclidean 
isometries 
$t(x,\sig,r)$ is invariant under this action. Thus the set
\[ \{ t(x,\sig,r) \mid x \in \sig \} \] 
is finite, and therefore achieves a minimum $t(x,r)$. 
Thus, for $r>0$ let $\delta = \min\{ \eps(x), t(x,r) \}$ (where $\eps(x)$ is 
the constant from Lemma \ref{InSimplex}) then if $d(y,x)<\delta$ then there 
exists a simplex $\sig$ containing $y$ and $x$ and moreover, $d_1(y,x)<r$. 
\end{proof}

This completes the proof of Theorem \ref{myC}.

\begin{remark}
The Gromov/Axes topology on $\scos$ is strictly finer than the Lipschitz/Euclidean topology. This was included in a previous preprint but we chose to omit it for the sake of brevity. However, to see that they are not the same is quick. Consider $F_2=\langle a,b \rangle$ and let $x$ be the graph of groups with one vertex whose related group is $\langle b \rangle$ and one edge labeled $a$. Now suppose $y$ is a point in the simplex with a graph of groups which is a rose with edges labeled $ab^i, b$. Then if $y$ is in the Gromov-topology-neighborhood $U(a,\eps)$ of $x$ then the edge labeled $b$ in $y$ has to have length shorter than $\frac{\eps}{i}$. Since the length depends on $i$, it depends on $\sig$. Thus, there is no open set in the Euclidean topology that is contained in $U(a,\eps)$. 
\end{remark}

\section{The isometries of Outer Space}

\begin{prop}\label{isomExtend}
Every Lipschitz isometry $F:\os \to \os$ extends to an isometry of the completion $\hat F:\Cos \to \Cos$. The simplicial metric completion $\scos = \fsc$ is an invariant subspace and $\hat F|_{\scos}$ is a homeomorphism of $\fsc$ with the Euclidean topology.
\end{prop}
\begin{proof}
By Corollary \ref{isomX} and Proposition \ref{conditions} the isometry $F$ extends to an isometry of $\Cos$. 
We claim that $\scos$ is invariant under $\hat F$. The reason is as follows: Suppose $T \in \Cos$ is not simplicial Let $B$ be the set of branch points of $T$. Note that $\ol{B} \neq T$ since $qvol(T) = 1$. By \cite{Lev} we may equivariantly collapse the components of $\ol{B}$ in $T$ to obtain a simplicial $F_n$ tree $T'$ with $qvol(T') = 1$. 
Moreover since arc stabilizers in $T$ are trivial then in $T'$ they are trivial as well. This implies that there is a $T' \in \Cos$ such that $d'(T,T') = 0$ and $T' \neq T$. 
By proposition \ref{zero_distance} $x \in \scos$ if and only if for all $x\neq y \in \Cos$, $d(x,y)>0$. Thus $\hat F$ preserves $\scos$ and its metric $d$. By Theorem \ref{topo} the Lipschitz topology is the same as the Euclidean topology. \end{proof}

We now work to show that $\hat F|_{\fsc} \from \fsc \to \fsc$ is simplicial.

\begin{defn}
A blowup $G'$ of a graph of groups $G$ is a graph of groups $G'$ along with a marking preserving map $c \from G' \to G$ so that $c$ collapses a proper subset of $E(G')$. 
\end{defn}

\begin{lemma}\label{BlowupLemma} 
Let $G$ be a graph of groups representing a free splitting of $F_n$, for $n\geq 3$. Then either:
\begin{enumerate}
\item There are three or more different 1-edge blowups of $G$ in $\fsc$. 
\end{enumerate}
or, the graph $G$ is one of the types $(A), (B)$ or $(C)$ in Figure \ref{exTableFig}, i.e. 
\begin{enumerate}\setcounter{enumi}{1}
\item Type A: $G$ has a unique vertex $v$ with non-trivial group $G_v$. $G_v$ is cyclic and the valence of $v$ is 1 and of all other vertices is $3$. In this case there is a single 1-edge blowup of $G$ in $\fsc$ an no other possible blowups. 
\item Type B: $G$ has a unique vertex $v$ with non-trivial group $G_v$ and $G_v$ is cyclic. Moreover, there is a unique embedded circle containing $v$. The valence of $v$ is two and the rest of the vertices have valence $3$. In this case there are two 1-edge blowups and two 2-edge blowups. 
\item Type C: $G$ has precisely two vertices $v_1, v_2$ with non-trivial groups $G_{i}$ that are cyclic for $i=1,2$.
The valence of $v_1, v_2$ is one, and all other vertices have valence equal to $3$. In this case there are two 1-edge blowups and one 2-edge blowup. 
\end{enumerate}
\end{lemma}

\begin{figure}[h]
\begin{center}
\includegraphics[width=\columnwidth]{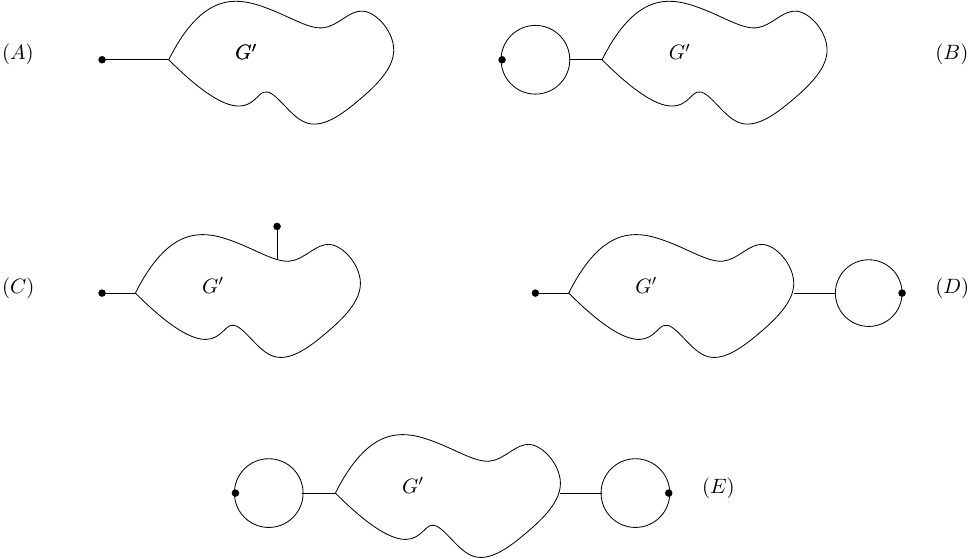}
\caption{The graphs of groups\label{exTableFig} discussed in the proof of Lemma \ref{BlowupLemma}. The subgraph $G'$ is an arbitrary trivalent graph. Types A,B and C have less than three 1-edge blowups.}
\end{center}
\end{figure}

\begin{proof}
The following types of graphs of groups have three or more 1-edge blowups.
\begin{enumerate}
\item $G$ contains a vertex of valence 4 or more.
\item $G$ contains a vertex with a non-cyclic vertex group. 
\item $G$ contains three or more vertices with non-trivial vertex groups. 
\item $G$ contains a vertex $v$ with $G_v \neq \{ 1 \}$ and there exist two distinct embedded loops $\beta, \gamma$ containing $v$. See figure \ref{example1}.
\item $G$ contains a vertex $v$ with $G_v \neq \{ 1 \}$ and a separating edge $e$ with $G - e = X \cup Y$ with $v \in X$ and $X - \{ v\} \neq \emptyset$ then again there are infinitely many 1-edge blowups of $G$ (see figure \ref{example2}).
\end{enumerate}

\begin{figure}[h]
\begin{center}
\includegraphics{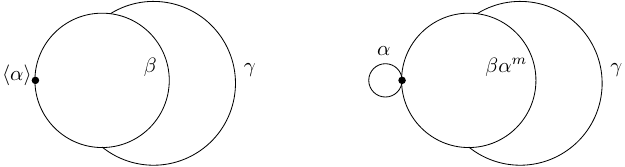}
\caption{\label{example1} The graph on the left has infinitely many 1-edge blow ups of the type on the right.}
\end{center}
\end{figure}

\begin{figure}[ht]
\begin{center}
\input{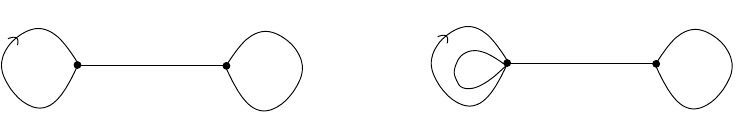_t}
\caption{\label{example2} The graph on the left is $G_\sig$ of type (5) the graph on the right is $G_\tau$ with $\tau \supset \sig$. }
\end{center}
\end{figure}

Therefore, the remaining cases are where $G$ contains at most two vertices with non-trivial groups and the remaining vertices have valence $= 3$. Moreover, by (2) the non-trivial groups are cyclic and either $v_i$ is a valence 1 vertex or it has valence two and there is a unique embedded circle containing $v_i$. 
These are the possibilities illustrated in Figure \ref{exTableFig}. Notice that type $(D)$ has three 1-edge blowups and case $(E)$ has 4 1-edge blowups. Therefore the graphs with less than three 1-edge blowups are A,B and C.
\end{proof}

\begin{defn}
Let $X$ be a simplicial complex. A point $x \in X$ is $j$-smooth if there exists a neighborhood $U \subset X$ of $x$ that is homeomorphic to $\RR^j$. 
\end{defn}

\begin{prop}\label{jsmoothprop}
Suppose $X$ is a simplicial complex of dimension $j$. If each $j-1$ simplex is contained in either one or more than three $k+1$ simplicies then the $j$-smooth points are the interiors of $j$-simplicies.
\end{prop}
\begin{proof}
If $x \in X$ is $j$-smooth then its link is homeomorphic to $S^{j-1}$. In particular, if $\tau$ is a $j-1$ simplex in the link of $x$ then it must be contained in precisely two $j$-simplicies. 
\end{proof}

\begin{prop}\label{HomeoPresSimp}
If $\hat F$ is a simplicial self map of $\fsc$. 
\end{prop}
\begin{proof}
Francaviglia and Martino \cite{FMIsom} prove that if $F$ is a homeomorphism of $\os$ then $F$ is simplicial. They prove it by induction on 
the codimension. They consider $\os^i$ the $i$-skeleton of $\os$ and show that every 
$i-1$ simplex is attached to three or more $i$-simplicies. 
Therefore, $i$-simplicies are the connected components of the $i$-smooth set in $\os^i$ and therefore, $F$ must map each $i$-simplex to an $i$-simplex.

This statement is false for $\fsc$. Consider for example the free splitting complex for $n=2$. Consider the simplex $\sig$ corresponding to a barbell. This simplex has two codimension 1 faces $\tau_1, \tau_2$ that correspond to collapsing one of the bells, and a codimension 2 simplex $\nu$ that corresponds to collapsing both bells. It is possible to perturb the identity homeomorphism to a homeomorphism $\hat F$ of $\text{FS}_2$ that maps $\tau_1$ into the union of $\tau_1 \cup \nu_1 \cup \tau_2$ (but not into a single simplex). 

Therefore, here we must use the hypothesis that $\hat F$ preserves the Lipschitz metric on $\fsc$. 
Since top dimensional simplicies of $\fsc$ are in fact contained in $\os$ then they are preserved by \cite{FMIsom}. 
$\hat F$ restricts to a homeomorphism on the codimension 1 skeleton and preserves the part of it that is contained in $\os$. Let $\tau$ be a top dimensional simplex, then $\hat F(\bar \tau) = \ol{ \hat F(\tau)}$. Let $\sig$ be an open $(3n-5)$-simplex in $\partial \tau$. 
Then $\hat F(\sig)$ is an open set in $\partial F(\tau)$. If $\hat F(\sig)$ is not contained in a face of $\hat F(\tau)$ then there must exist two points $x,x' \in \sig$ so that $\hat F(x), \hat F(x')$ lie in different codimension 1 faces of $\hat F(\tau)$.
Two different top dimensional faces of $\hat F(\tau)$ that are not contained in $\os$ correspond to shrinking two different 1 edge loops in the graph corresponding to $\hat F(\tau)$. 
Thus, $d(\hat F(x), \hat F(x'))= \infty$
However, $d(x,x')<\infty$ since they belong to the same open simplex. We get a contradiction so $\hat F(\sig)$ must be contained in a codimension 1 face of $\hat F(\tau)$. Since $\hat F$ is invertible, $\hat F(\sig)$ is equal to a codimension 1 face. This shows that $F$ sends codimension 1 simplicies of $\fsc$ to codimension 1 simplicies and preserves the codimension 2 skeleton. 

For $j<3n-5$, by Lemma \ref{BlowupLemma} and Proposition \ref{jsmoothprop}, the connected components of the set of $j$-smooth points of the $j$ skeleton of $\fsc$ are precisely the open $j$-simplicies so $F$ preserves the set of open $j$-simplicies and the induction pushes through.
\end{proof}

\begin{cor}\label{IsomAction}
There is a homomorphism $\phi: \text{Isom}(\os) \to \text{Aut}(\fsc)$. 
\end{cor}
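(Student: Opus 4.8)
The plan is to define $\phi$ by restriction to the simplicial completion. First I would recall that, by Corollary~\ref{isomX} together with Proposition~\ref{distIsLip}, every isometry $F$ of $(\os,d)$ extends to an isometry $\hat F$ of $(\Cos,d)$, and that by Proposition~\ref{extIsom} this extension is unique. By the first proposition of this section, $\hat F$ preserves the subspace $\scos$ and restricts there to a $d$-preserving self-homeomorphism for the Euclidean topology (using Theorem~\ref{topo}); by the second proposition of this section, this restriction respects the simplicial structure of $\scos$, which by the characterization following Theorem~\ref{character} is identified with the free splitting complex $FS_n$. Since $F^{-1}$ is also an isometry of $\os$ and $\widehat{F^{-1}}$ is a two-sided inverse of $\hat F$ (again by uniqueness), the restriction of $\hat F$ to $\scos$ is a simplicial \emph{automorphism}. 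I would then set $\phi(F)$ equal to the corresponding element of $\text{Aut}(FS_n)$.

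Next I would verify that $\phi$ is a group homomorphism. The only nontrivial point is that $\widehat{F\circ G}=\hat F\circ\hat G$ for any two isometries $F,G$ of $\os$; this is immediate from the uniqueness clause of Proposition~\ref{extIsom}, since $\hat F\circ\hat G$ is an isometry of $\Cos$ whose restriction to the dense subspace $\os$ is $F\circ G$, hence it must be the unique such extension. Restricting to the invariant subspace $\scos$ and transporting along the homeomorphism $\scos\cong FS_n$ then gives $\phi(F\circ G)=\phi(F)\circ\phi(G)$, and trivially $\phi(\mathrm{id}_{\os})=\mathrm{id}_{FS_n}$, so $\phi$ is a homomorphism.

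I do not expect a real obstacle here: the corollary is essentially bookkeeping assembled from the results of this section and of Section~1. The one subtlety worth flagging is that the very definition of $\phi$ relies on the canonicity of the embedding $\os\hookrightarrow\Cos$ — without uniqueness of the extension one could not guarantee that $\phi$ respects composition — and on the invariance of $\scos$ under every such extension, which is ultimately supplied by Proposition~\ref{zero_distance} through the first proposition of this section. I would make no claim here about the kernel or the image of $\phi$; identifying $\phi$ as an isomorphism onto $\text{Aut}(FS_n)$, and hence recovering Theorem~C, is carried out in the remainder of the paper.
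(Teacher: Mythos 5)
Your proposal is correct and follows essentially the same route as the paper: the paper's proof likewise consists of invoking the preceding propositions of this section (extension to $\Cos$, invariance of $\scos$, preservation of the simplicial structure) and then using the uniqueness clause of Proposition~\ref{extIsom} to conclude $\hat G \circ \hat F = \widehat{G \circ F}$, which is exactly your homomorphism check. Your additional remarks on inverses and on why uniqueness is the crucial ingredient are accurate elaborations of the same argument.
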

\begin{proof}
$\hat G \circ \hat F$ is an isometry of $\scos$ that restricts to $G \circ F$ on $\os$. By the uniqueness in proposition \ref{extIsom}, $\hat G \circ \hat F = \widehat{G \circ F}$. 
\end{proof}

\begin{cor}\label{homo}
For $n\geq 3$ there is a homomorphism $\phi: \text{Isom}(\os) \to \out$. For $n=2$, there is a homomorphism $\phi: \text{Isom}(\mathcal{X}_2) \to \text{PSL}(2,\ZZ)$.
\end{cor}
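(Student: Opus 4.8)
The plan is to compose the homomorphism $\phi\colon \text{Isom}(\os)\to\text{Aut}(FS_n)$ just constructed with an identification of $\text{Aut}(FS_n)$ — or, for $n=2$, of the subgroup of $\text{Aut}(FS_2)$ that is actually hit by $\phi$ — with the target group named in the statement. For $n\geq 3$ this is essentially immediate: Aramayona and Souto \cite{AS} show that the natural map $\out\to\text{Aut}(FS_n)$, sending $\varphi$ to the simplicial automorphism $T\mapsto T\cdot\varphi$, is an isomorphism. Since an isometry $F$ of $\os$ restricts to a simplicial automorphism of $\scos\cong FS_n$, post-composing $\phi$ with the inverse of the Aramayona--Souto isomorphism yields a homomorphism $\text{Isom}(\os)\to\out$; it is a homomorphism because $\phi$ is and because the Aramayona--Souto identification is canonical. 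So the content for this case is entirely contained in the preceding proposition (isometries preserve the simplicial structure) together with \cite{AS}.

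For $n=2$ the target changes because $\out$ is replaced by $\text{Out}(F_2)\cong\text{GL}(2,\ZZ)$, which does not act faithfully on $FS_2$: the central involution $-\mathrm{id}$ (given by inverting a basis) acts trivially, so the image of $\text{Out}(F_2)$ in $\text{Aut}(FS_2)$ is $\text{GL}(2,\ZZ)/\{\pm\mathrm{id}\}$. I would first describe $FS_2$ explicitly as a $2$-dimensional complex whose vertices fall into the two $\text{Out}(F_2)$-orbits of one-edge free splittings (the ``separating'' splittings $\langle a\rangle*\langle b\rangle$ and the non-separating HNN splittings) and whose top simplices correspond to roses, theta graphs, and dumbbells. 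Then I would run the same strata-bookkeeping as in the preceding proposition: an isometry of $\mathcal{X}_2$ induces a simplicial automorphism of $\scos$ that is forced, by the local geometry of $\mathcal{X}_2$, to preserve the separating/non-separating dichotomy and the incidence pattern of the codimension-one faces (case $7$ versus case $8$ of that proposition). This pins the induced automorphism of $FS_2$ down to the image of $\text{Out}(F_2)$, and the finer metric constraints cut that image down to the subgroup $\text{PSL}(2,\ZZ)$; restricting $\phi$ gives the desired homomorphism $\text{Isom}(\mathcal{X}_2)\to\text{PSL}(2,\ZZ)$, again a homomorphism because it is a restriction of $\phi$.

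The $n\geq 3$ case is routine, so the main obstacle is the $n=2$ analysis: unlike $FS_n$ for $n\geq 3$, the complex $FS_2$ is not rigid in the naive sense — it admits extra combinatorial automorphisms that permute the strata — so one cannot simply invoke a rigidity theorem, and must instead use the metric (the distinction ``isometry, not merely homeomorphism'' exploited in the preceding proposition) to rule out the unwanted automorphisms and land exactly in $\text{PSL}(2,\ZZ)$. Carrying out that elimination, i.e. checking precisely which graph-of-groups automorphisms of $FS_2$ are compatible with the incidence data recorded in $\mathcal{X}_2$, is the one place where a short explicit rank-two computation is unavoidable.
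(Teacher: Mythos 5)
For $n\geq 3$ your argument is exactly the paper's: compose $\phi\colon \text{Isom}(\os)\to\text{Aut}(FS_n)$ with the Aramayona--Souto identification $\text{Aut}(FS_n)\cong\out$. That part is fine.

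For $n=2$, however, there is a genuine gap, and your proposed route is not the paper's. The paper needs no metric input at this step: it shows combinatorially that $\text{Aut}(FS_2)$ is already the stated group, by observing that the kernel of $\text{Out}(F_2)\to\text{Aut}(FS_2)$ is generated by the class of $x_i\mapsto x_i^{-1}$ (i.e.\ $-I$), and that any simplicial automorphism of $FS_2$ must preserve the set of simplices with free faces, which are precisely those whose graphs have separating edges; hence it preserves the subcomplex of non-separating one-edge splittings, which is the Farey graph, whose automorphism group is the target group. The corollary is then again just composition with $\phi$. Your plan instead rests on the unsubstantiated assertion that $FS_2$ admits ``extra combinatorial automorphisms that permute the strata'' which must be killed by metric constraints --- but the free-face observation above already forces every simplicial automorphism to restrict to a Farey-graph automorphism, so no such extra automorphisms survive, and you never actually exhibit any. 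Worse, the mechanism you invoke cannot work as stated: the image of $\text{Out}(F_2)$ in $\text{Aut}(FS_2)$ consists entirely of automorphisms induced by isometries of $\mathcal{X}_2$, so ``finer metric constraints'' cannot cut that image down to a proper subgroup such as $\text{PSL}(2,\ZZ)$ inside your $\text{GL}(2,\ZZ)/\{\pm I\}$; whatever group the isometries of $\mathcal{X}_2$ generate in $\text{Aut}(FS_2)$ contains that full image. Finally, the one step that would produce the homomorphism to $\text{PSL}(2,\ZZ)$ --- your ``short explicit rank-two computation'' identifying which automorphisms of $FS_2$ occur --- is deferred rather than carried out, so the $n=2$ homomorphism is never actually constructed. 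To repair this, replace the metric elimination by the paper's combinatorial argument: free faces detect separating edges, restriction to the Farey graph gives a homomorphism $\text{Aut}(FS_2)\to\text{Aut}(\text{Farey})$, and composing with $\phi$ yields the desired map.
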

\begin{proof}
Using Corollary \ref{IsomAction}, we must show that $Aut(\fsc) \cong \out$ for $n\geq 3$ and $Aut(FS_2) \cong \text{PSL}(2,\ZZ)$. For $n \geq 3$ this is a result of 
Aramayona and Souto \cite{AS}.
For $n=2$, simplicies with free faces in 
$\text{FS}_2$ are precisely the graphs with separating edges. Thus, an automorphism 
of $\text{FS}_2$ preserves the ``non-separating splitting complex''. This complex is the Farey complex and its automorphism group is $\text{PSL}(2,\ZZ)$. We therefore get a homomorphism $\phi \from \text{Isom}\os \to \text{PSL}(2,\ZZ)$
\end{proof}

We now wish to show that the kernel of this homomorphism is trivial. 

\begin{defn}
Let $(G,\tau)$ be a marked graph representing the simplex $\sig$. For any proper subgraph $\emptyset \neq H \subset G$ let $\sig_H$ denote the face of $\sig$ obtained by collapsing each connected component of the complement of $H$.
\end{defn}

\begin{prop}\label{distDetermine}
Let $\sig$ be a simplex in $\fsc$ corresponding to the marked graph $G$. Let $\al$ be a candidate loop in $G$, let $\al_G$ denote its image. For every $x \in \text{int}(\sig)$, let $vol(\al,x)$ denote the volume of $\al$ in $x$, then 
\[ d(x,\sig_\al) = \log \frac{1}{vol(\al,x)}.\] 
\end{prop}
\begin{proof}
Denote $\lam =1/vol(\al,x)$ and let $y \in \bar\sig$ be the point such that 
\[ \left\{ \begin{array}{lr}
len(e,y) = 0 & \text{for } e \subset G-\al_G \\ 
len(e,y) = \lam len(e,x) & \text{for } e \subset \al_G
\end{array} \right. \] 
Note that the volume of $y$ is 1 since only the edges of $\al_G$ survive. 
The natural map $f:x \to y$ 
stretching the edges in $\al_G$ by $\lam$ and collapsing the others to points satisfies 
$\Lip(f) = \lam$. Therefore $d(x,y) \leq \log\lam$ and $d(x,\sig_\al) \leq \log\lam$. 

When $\al$ is a candidate that is injective except on possibly finitely many points (for example a circle or a figure 8) then in $\bar\sig$, $l(\al, \cdot) = vol(\al, \cdot)$. Thus, 
$st(\al) = \frac{1}{l(\al,x)} = \lam$ and so $d(x,z)\geq \log\lam$. Therefore, $d(x,\sig_\al) = \log\lam$.

Otherwise $\al$ has the form $\al=\beta \delta \gamma \bar \delta$ and for each $z \in \sig_\al$ we have 
\[ l(\beta,z) + l(\gamma,z) + l(\delta,z) = 1 = \lam (l(\beta,x) + l(\gamma,x) + l(\delta,x)).\] 
If either $l(\beta,z)> \lam l(\beta,x)$ or $l(\gamma,z)> \lam l(\gamma,x)$ then $d(x,z)> \log \lam$. 
Otherwise, $l(\delta,z) \geq \lam l(\delta,x)$ hence 
$l(\al,z) = 1+ len(\delta,z) \geq 1+\lam l(\delta,x) = 
\lam vol(\al,x) + \lam l(\delta,x) =
\lam l(\al, x)$. 
Hence $d(x,z) \geq \log \lam$. We thus get that $d(x,\sig_\al) = \log\lam$. 
\end{proof}

\begin{cor}\label{corCandidates}
Let $x$ be a point in the interior of the simplex $\sig$. The lengths of candidate loops of $x$ are determined by the Lipschitz distance of $x$ to the faces of $\sig$. 
\end{cor}
\begin{proof}
If $\al$ is a candidate that is injective into $x$ for all but finitely many points then $l(\al,x) = vol(\al,x)$ and the statement follows from Proposition \ref{distDetermine}. Otherwise $\al = \beta \delta \gamma \bar \delta$. Since the lengths of $\beta$ and $\gamma$ are determined by the distances to the appropriate faces and the length of $\delta$ can then be computed by the volume of $\al$. Then we can compute the length of $\al$. 
\end{proof}

We now prove Theorem \ref{myD}

\begin{theorem}[\cite{FMIsom}]
The group of isometries of $\os$ with the Lipschitz metric is $\out$ for $n \geq 3$. 
The isometry group of $\mathcal{X}_2$ with the Lipschitz metric is $\textup{PSL}(2,\ZZ) \cong \textup{Out}(F_2)/\{x_i \to x_i^{-1}\}$. 
\end{theorem}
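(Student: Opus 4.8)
The plan is to prove that the homomorphism $\phi\colon\isom(\os)\to\out$ furnished by Corollary \ref{homo} (and, for $n=2$, its analogue $\phi\colon\isom(\mathcal{X}_2)\to\textup{PSL}(2,\ZZ)$) is an isomorphism, with inverse the natural action map $\rho\colon\out\to\isom(\os)$, $\psi\mapsto(x\mapsto x\cdot\psi)$. I will argue for $n\geq 3$; the case $n=2$ differs only in that the $n=2$ clause of Corollary \ref{homo} replaces the Aramayona--Souto theorem. For surjectivity, fix $\psi\in\out$. By the extension results of this section the isometry $\rho(\psi)$ extends uniquely to $\Cos$, and by construction this extension restricts on the simplicial completion $\scos\cong FS_n$ to the standard simplicial action of $\psi$. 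Since the Aramayona--Souto identification $\text{Aut}(FS_n)\cong\out$ is the identity on $\out$, we obtain $\phi(\rho(\psi))=\psi$; hence $\phi$ is onto and $\rho$ is a section.

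\emph{Injectivity is the heart of the matter.} Let $F\in\ker\phi$. Then the simplicial automorphism of $FS_n$ induced by $\hat F|_{\scos}$ is the identity, so $\hat F$ fixes every vertex of $FS_n$, hence fixes every simplex of $FS_n$ setwise; in particular it fixes every open simplex $\sigma$ of $\os$ and every face of $\sigma$ (in $FS_n$) setwise. Fix such a $\sigma$, represented by a marked graph $G$, and a point $x\in\sigma$; since $\hat F$ is a homeomorphism fixing $\overline\sigma$ together with its proper faces, $F(x)\in\sigma$ as well. For every subgraph $H\subseteq G$ arising as the image of a candidate loop, $\hat F$ fixes the face $\sigma_H$ setwise, and $F$ is an isometry, so
\[
d(x,\sigma_H)=\inf_{z\in\sigma_H}d(x,z)=\inf_{z\in\sigma_H}d(F(x),F(z))=d(F(x),\sigma_H).
\]
By Proposition \ref{distDetermine} the quantity $d(x,\sigma_H)$ determines $\vol{H}$ as measured in the metric of $x$; thus $x$ and $F(x)$ assign equal volume to every candidate-image subgraph, and so, again by Proposition \ref{distDetermine}, they assign equal lengths to all candidate loops. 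Since a point of $\os$ is determined, within the simplex containing it, by the lengths of its candidate loops --- the combinatorial fact at the core of Francaviglia and Martino's approach, amounting to recovering the edge lengths of $G$ from the volumes of candidate-image subgraphs together with the normalization $\sum_{e}\ell(e)=1$ --- we conclude $F(x)=x$. As $x$ was arbitrary, $F=\mathrm{id}$, so $\ker\phi=\{\mathrm{id}\}$.

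Combining the two parts, $\phi\colon\isom(\os)\to\out$ is an isomorphism for $n\geq 3$, with inverse $\rho$. For $n=2$ the same scheme applies, with Corollary \ref{homo} supplying the homomorphism onto $\textup{PSL}(2,\ZZ)$ and the kernel of $\rho$ now generated by the automorphism inverting every basis element, yielding $\isom(\mathcal{X}_2)\cong\textup{PSL}(2,\ZZ)\cong\textup{Out}(F_2)/\{x_i\to x_i^{-1}\}$. The one genuinely nontrivial step is injectivity, and it rests entirely on Proposition \ref{distDetermine}: an isometry that fixes the combinatorial pattern of $FS_n$ automatically preserves the distances from any interior point of a simplex to all of its faces, and by that proposition these distances pin the point down. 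The supporting facts --- that $\hat F$ preserves the simplicial completion and acts simplicially on it --- have already been established earlier in this section.
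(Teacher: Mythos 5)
Your proposal follows essentially the same route as the paper: surjectivity via the natural action of $\out$ as a section of the homomorphism of Corollary \ref{homo}, and injectivity by noting that an isometry $F$ in the kernel fixes every simplex of $\scos$ setwise, hence preserves the distances $d(x,\sig_H)$ to faces, so that Proposition \ref{distDetermine} forces $x$ and $F(x)$ to have the same candidate-loop lengths. The one place you diverge is the very last step: you close the argument by asserting that a point of $\os$ is determined, within its simplex, by the lengths of its candidate loops, attributing this to Francaviglia--Martino. That statement is true, but it is a genuine combinatorial lemma (recovering all edge lengths from candidate lengths) which is proved nowhere in this paper, so as written your proof leans on an external input the paper does not use. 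The paper's own finish avoids it entirely and is shorter: since $x$ and $F(x)$ both lie in $\os$ and $d(x,F(x))$ equals the logarithm of the maximal stretch over the candidates of $x$, equal candidate lengths give $d(x,F(x))=0$, and Proposition \ref{zero_distance} (a simplicial tree at distance zero from a point of $\Cos$ equals it) yields $F(x)=x$. Substituting that two-line argument for your appeal to the Francaviglia--Martino fact makes the proof self-contained and identical in substance to the paper's; everything else in your write-up, including the more explicit treatment of surjectivity and the $n=2$ case via the $\textup{PSL}(2,\ZZ)$ clause of Corollary \ref{homo}, matches the intended argument.
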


\begin{proof}
We wish to show that the homomorphisms in Corollary \ref{homo} are injective. 
It is enough to show that if $F$ is an isometry of 
$\os$ such that that $\phi(F) = id$ 
then $F$ is the identity on $\os$. Since $\phi(F)$ is the identity then for each simplex $\sig \in \fsc$ we have $F(\sig) = \sig$.
Let $x \in \os$ and $\sig$ a simplex so that $x \in int(\sig)$. 
For all faces $\tau$ of $\sig$, $d(x,\tau) = d(F(x),F(\tau)) = d(F(x),\tau)$. By 
Corollary \ref{corCandidates}, the lengths of all candidate loops of $G$ the underlying marked graph of $\sig$, are the 
same in both $x$ and $F(x)$. Since the distance $d(x,F(x))$ is the logarithm of the maximal stretch of 
candidate loops of $x$ then $d(x,F(x))=0$ therefore $F(x) = x$ by Proposition 
\ref{zero_distance}.
\end{proof}

\bibliography{ref}

\end{document}